\documentclass[a4,twoside, 10pt]{article}

\usepackage[a4paper, textwidth=15.75cm, textheight=23.4cm, marginratio={5:5,5:6},marginparwidth=2.1cm]{geometry}

\newcommand{\myauthor}{Jeremiah Heller, Mircea Voineagu, Paul Arne {\O}stv{\ae}r}

\newcommand{\mytitle}{Bredon motivic cohomology of the complex numbers}

\title{\mytitle}
\author{  
Jeremiah Heller \footnote{Heller was supported by NSF award DMS-1710966.},
Mircea Voineagu,
%\footnote{},\\  and
Paul Arne {\O}stv{\ae}r 
\footnote{{\O}stv{\ae}r gratefully acknowledges the support of the Centre for Advanced Study at the Norwegian Academy of Science and Letters in Oslo,
Norway, which funded and hosted the research project ``Motivic Geometry" during the 2020/21 academic year,
RCN Frontier Research Group Project no. 250399 ``Motivic Hopf Equations",  no. 312472 ``Equations in Motivic Homotopy",  
and Alexander von Humboldt Foundation.
}} 
\date{}

%% general includes
\usepackage{amsmath}
\usepackage{amscd}
\usepackage{amsthm} 
\usepackage{amssymb}
\usepackage{amsfonts}
\usepackage{eucal}
\usepackage{microtype}
\usepackage{mathrsfs}
\usepackage{dsfont}
\usepackage{stmaryrd}
\usepackage[margin = 1.5cm]{caption}
\usepackage[shortlabels]{enumitem}
\setlist[enumerate]{font=\upshape, topsep=0.5pt,itemsep=-0.5ex,partopsep=1ex,parsep=1ex} 
\setlist[enumerate,1]{label = (\arabic*)}
\setlist[1]{labelindent=\parindent} 
\setlist[itemize,1]{topsep=0.5pt,itemsep=-0.5ex,partopsep=1ex,parsep=1ex} 
%\setlist[enumerate]{labelsep=*, leftmargin=1.5pc}
%\setlist[enumerate,1]{label = \arabic*.,ref   = \arabic*}
%\setlist[enumerate,2]{label = \emph{\alph*}),ref   = \theenumi.\emph{\alph*}}
%\setlist[enumerate,3]{label = \roman*),ref   = \theenumii.\roman*}
%\setlist[description]{font=\sffamily\bfseries}

%% headers
\usepackage{fancyhdr}
\pagestyle{fancy}

\fancyhead{}
\fancyhead[LO,R]{\bfseries\footnotesize\thepage}
\fancyhead[LE]{\bfseries\footnotesize\rightmark}
\fancyhead[RO]{\bfseries\footnotesize\rightmark}
\chead[]{}
\cfoot[]{}
\setlength{\headheight}{1cm}

%% colors
\usepackage[dvipsnames]{xcolor} 
\definecolor{refkey}{gray}{0.5}
\definecolor{labelkey}{gray}{0.5}
\definecolor{note}{rgb}{0.94, 0.99, 1.00}
\colorlet{myurlcolor}{Aquamarine}
\colorlet{mylinkcolor}{violet}
\colorlet{mycitecolor}{YellowOrange}
\definecolor{reference}{rgb}{.93,.51,.93}
\definecolor{citation}{rgb}{1,.68,.26}
\definecolor{mrnumber}{rgb}{.80,.40,0}
\newcommand\myshade{85}

%% diagrams
\usepackage{graphicx}
\usepackage{float}
\usepackage[all]{xy} %To use xy-pic for diagrams.
\usepackage{tikz}
\usetikzlibrary{patterns}
\tikzstyle{mygrid}=[gray!25!white]

\tikzstyle{myfill} = [fill=gray, fill opacity= 0.2]
\tikzstyle{myhatch} = [pattern=north west lines, pattern color=blue!35]
\tikzstyle{myfill1} = [fill=green!70!white, fill opacity= 0.1]
\tikzstyle{myfill2} = [fill=blue!70!white, fill opacity= 0.1]
\tikzstyle{myfill3} = [fill=red!70!white, fill opacity= 0.1]
\tikzstyle{myline} = [gray!40]
%\tikzstyle{myfillzero} = [fill=green!70!white, fill opacity= 0.1]
\usetikzlibrary{cd}
\usetikzlibrary{plotmarks}
\usetikzlibrary{arrows}
%%--the following automatically centers figures--%%
\makeatletter                                    %%
\g@addto@macro\@floatboxreset\centering          %%
\makeatother                                     %%
%%-----------------------------------------------%%
\usepackage{subcaption}

%% editing, commenting, referencing
% \usepackage[notcite,notref,color]{showkeys} %Comment out to get rid of displayed labels.
\usepackage[pdfstartview=FitH,
pdfauthor={\myauthor},
pdftitle={\mytitle},
colorlinks,
linkcolor=mylinkcolor!\myshade!black,
citecolor=citation!\myshade!black,
urlcolor=myurlcolor!\myshade!black,
backref=page,
bookmarks, 
bookmarksopen=true, 
bookmarksopenlevel=3, 
bookmarksdepth=3]{hyperref}

\usepackage[textsize=scriptsize, color=note, linecolor=lightgray]{todonotes}
%\setlength{\marginparwidth}{8cm}
%\setuptodonotes{tickmarkheight=2mm}

\newcommand{\aref}[1]{\autoref{#1}}

%% counters
\setcounter{secnumdepth}{2}
\setcounter{tocdepth}{1}
\usepackage{aliascnt}

%%---------------------------------------------------------------------%%
%%----------------------|  Some defined symbols  |---------------------%%
%%---------------------------------------------------------------------%%  

%% Some Letters
 
\newcommand{\C}{\mathds{C}} 
\newcommand{\A}{\mathds{A}}
\renewcommand{\P}{\mathds{P}}
\newcommand{\Z}{\mathds{Z}}

\newcommand{\E}{\mathrm{E}}
\newcommand{\XX}{\mathcal{X}}

\newcommand{\rH}{\widetilde{H}}
\newcommand{\Br}{\mathrm{Br}}

\newcommand{\pt}{\mathrm{pt}}

\renewcommand{\Re}{\mathrm{Re}}

\newcommand{\Sm}{\mathrm{Sm}}

\newcommand{\SH}{\mathrm{SH}}

\newcommand{\NC}{\mathrm{NC}}
\newcommand{\block}{\mathrm{B}}

\newcommand{\EG}{\mathbf{E}}
\newcommand{\BG}{\mathbf{B}}
\newcommand{\EGt}{\widetilde{\EG} }
\newcommand{\Es}{\mathrm{E}}
\newcommand{\Bs}{\mathrm{B}}

\newcommand{\wt}[1]{\widetilde{#1}}

\newcommand{\mcal}[1]{\mathcal{#1}}
\newcommand{\ul}[1]{\underline{\smash{#1}}}

\newcommand{\sing}{\mathrm{sing}}

\newcommand{\xto}[1]{\xrightarrow{#1}}

%%%%%%%%%
%% Two options for displaying Bredon cohomology of a point %%
%$ To switch notation, comment one of these and uncomment the other %%
\newcommand{\MMt}{\mathds{M}^{C_2}_2} 
%\newcommand{\MMt}{H^{\star}_{\Br}(\pt, \ul{\Z/2})}
%%%%%%%%%
\newcommand{\MMtn}{\mathds{M}^{C_2}_n}

%%-----------------------------------------------------------------------%%
%%----------------------|  Some defined operators  |---------------------%%
%%-----------------------------------------------------------------------%%

\DeclareMathOperator*{\colim}{\mathrm{colim}}

\DeclareMathOperator{\spec}{\mathrm{Spec}}

\newcommand{\cd}{\smash\cdot}

%%%%% align in substack
\makeatletter
\newcommand{\subalign}[1]{%
	\vcenter{%
		\Let@ \restore@math@cr \default@tag
		\baselineskip\fontdimen10 \scriptfont\tw@
		\advance\baselineskip\fontdimen12 \scriptfont\tw@
		\lineskip\thr@@\fontdimen8 \scriptfont\thr@@
		\lineskiplimit\lineskip
		\ialign{\hfil$\m@th\scriptstyle##$&$\m@th\scriptstyle{}##$\hfil\crcr
			#1\crcr
		}%
	}%
}
\makeatother
%%%%%%

%%---------------------------------------------------------------------%%
%%----------------|  Some theorem-like environments  |-----------------%%
%%---------------------------------------------------------------------%%
\numberwithin{equation}{section} %Fiddles with numbering system of the following.

\theoremstyle{plain} 
 
\newaliascnt{theorem}{equation}  
\newtheorem{theorem}[theorem]{Theorem}  
\aliascntresetthe{theorem}

\newaliascnt{proposition}{equation}  
\newtheorem{proposition}[proposition]{Proposition}
\aliascntresetthe{proposition}

\newaliascnt{lemma}{equation}    
\newtheorem{lemma}[lemma]{Lemma}
\aliascntresetthe{lemma}

\newaliascnt{corollary}{equation}  
\newtheorem{corollary}[corollary]{Corollary}
\aliascntresetthe{corollary}

\newaliascnt{claim}{equation}  

\aliascntresetthe{claim}

 \theoremstyle{definition}

\newaliascnt{definition}{equation}  
\newtheorem{definition}[definition]{Definition}
\aliascntresetthe{definition}

\newaliascnt{example}{equation}  
\newtheorem{example}[example]{Example}
\aliascntresetthe{example}

\newaliascnt{remark}{equation}   
\newtheorem{remark}[remark]{Remark}
\aliascntresetthe{remark}

\newaliascnt{condition}{equation}

\aliascntresetthe{condition}

\newaliascnt{notationconvention}{equation}

\aliascntresetthe{notationconvention}

\newaliascnt{notation}{equation}
\newtheorem{notation}[notation]{Notation}
\aliascntresetthe{notation}

\begin{document}

\maketitle
\begin{abstract}
Over the complex numbers, 
we compute the $C_2$-equivariant Bredon motivic cohomology ring with $\Z/2$ coefficients.
By rigidity, 
this extends Suslin's calculation of the motivic cohomology ring of algebraically closed fields of characteristic zero to the $C_2$-equivariant motivic setting. 

\paragraph{Keywords.}
   Motivic homotopy theory, equivariant homotopy theory, Bredon cohomology.

\paragraph{Mathematics Subject Classification 2010.}
Primary:
    \href{https://mathscinet.ams.org/msc/msc2010.html?t=14Fxx&btn=Current}{14F42},
    \href{https://mathscinet.ams.org/msc/msc2010.html?t=55Pxx&btn=Current}{55P91}.
Secondary:
    \href{https://mathscinet.ams.org/msc/msc2010.html?t=55Pxx&btn=Current}{55P42},
    \href{https://mathscinet.ams.org/msc/msc2010.html?t=55Pxx&btn=Current}{55P92}.

\end{abstract} 
 
\tableofcontents

\section{Introduction}
Bredon motivic cohomology (introduced in \cite{HOV} and \cite{HOV1}) is a generalization of motivic cohomology to the setting of smooth varieties with finite group action.   
Part of a larger group of motivic 
$C _2$-invariants, such as Hermitian K-theory and motivic real cobordism, it plays an essential role in equivariant motivic homotopy theory. 
One distinguishing feature is that Bredon motivic cohomology appears as the zero slice of the equivariant motivic sphere \cite{zeroslice}.

Bredon motivic cohomology is ready for concrete computations, which will be crucial for applications of the theory to other motivic and topological invariants. In this paper, we compute the Bredon motivic cohomology ring with $\Z/2$-coefficients. The usual methods \cite{Sus}, \cite{YO}, \cite{YP} generalize the computations to an algebraically closed field of characteristic zero. These can be seen as a first step in understanding the largely unknown and difficult to compute 
 Bredon cohomology ring for an arbitrary field $k$ (for partial results in this direction see \cite{Voi}) as well as the 
 $C_2$-equivariant motivic Steenrod algebra of cohomology operations.

Our computations are organized via modules over Bredon cohomology of a point. 
Before presenting our computations, we recall this ring and  introduce some notation used to explain our results. 

 \subsection{Bredon cohomology}\label{sec:tbc}

In equivariant topology, Bredon cohomology plays the role that singular cohomology plays in ordinary topology. Some of its key features are that it takes a Mackey functor as coefficients, it is graded by representations, and is represented by an equivariant Eilenberg-MacLane spectrum, see \cite{May:equi} for details. 
The case of interest to us is $G=C_2$, in which case we write $\sigma$ for the sign representation. The group ${\rm RO}(C_2)$ is identified with $\Z\oplus\Z\{\sigma\}$. We adopt the convention that $\star$ stands for an 
${\rm RO}(C_2)$-grading and we use $\ast$ for an integer grading. For an abelian group $A$, the Bredon cohomology with coefficients in the constant Mackey functor $\ul{A}$, of a $C_2$ spectrum $\mathcal{X}$ is written $\rH^{i+p\sigma}_{\Br}(\mathcal{X}, \ul{A})$. If $\mathcal{X} = \Sigma^{\infty}X_+$ we simply write 
 $H^{i+p\sigma}_{\Br}(X, \ul{A}) := \rH^{i+p\sigma}_{\Br}(\Sigma^{\infty}X_+, \ul{A})$.

The Bredon cohomology ring of a point with $\ul{\mathbb{Z}/2}$-coefficients was originally computed by Stong in unpublished work. Written accounts can be found in  \cite[Appendix]{Car:op} and \cite[Proposition 6.2]{HKr}.  For the corresponding computation with $\ul{\Z}$-coefficients see \cite[Theorem 2.8]{D} or \cite[Section 2]{Greenlees:4} for recent discussion of these computations. 
We write 
\[
\MMt:= H^{\star}_{\Br}(\pt,\ul{\Z/2}).
\]

Let $\Z/2[a,u]$ be the polynomial ring generated by elements whose degrees are $|a|=\sigma$ and $|u| = -1 +\sigma$. Consider 
$\Z/2[a^{-1}, u^{-1}]$ as a graded $\Z/2[a,u]$-module and write 
\[
\NC := \Sigma^{2-2\sigma}\Z/2[a^{-1}, u^{-1}], 
\]
where $\Sigma^{m+n\sigma}M$ denotes the shifted graded module given by  $(\Sigma^{m+n\sigma}M)^{a+p\sigma}=M^{a-m+(p-n)\sigma}$. From \aref{point2}, one sees that $\MMt$ consists of two cones; $\NC$ is the ``negative cone". The Bredon cohomology ring of a point is
\begin{equation}\label{eqn:Brofpt}
\MMt \cong \Z/2[a, u] \oplus \NC,
\end{equation}
where the multiplicative structure is determined by the action of $\Z/2[a, u]$ on $\NC$ and all products between elements in $\NC$ are trivial. 
Writing $\theta\in \NC$ for the element which corresponds to $1\in \Z/2[a^{-1}, u^{-1}]$, we  express elements of $\NC$ in the form $\frac{\theta}{a^mu^n}$, for $m,n\geq 0$.

	We  introduce some auxiliary $\MMt$-modules. See \aref{point2} for graphical depictions. Recall the universal free $C_2$-space $\Es C_2$; a geometric model is  $S({\infty}\sigma) = \colim_nS(n\sigma)$, where $S(n\sigma)$ is the unit sphere in the $n$-dimensional real sign representation.  The space $\wt\Es C_2$ is defined to be  the unreduced suspension of $\Es C_2$, which by definition fits into the cofiber sequence of based $C_2$-spaces, $\Es C_{2+} \to S^{0} \to \wt \Es C_2$.

The Bredon cohomology ring of $\Es C_2$ is
		\[
		H^\star_{\Br}(\Es C_2,\ul{\Z/2}) \cong \Z/2[a, u^{\pm 1}]
		\]
		where $|u| =  -1+\sigma$ and $|a| = \sigma$, see e.g., \cite[Lemma 27]{Car:op}. Then $	H^\star_{\Br}(\Es C_2,\ul{\Z/2})\cong \MMt[u^{-1}]$ and the ring map $\MMt\to H^{\star}_{\Br}(\Es C_2,\ul{\Z/2})$ is the localization map. In other words, it is the map
		which sends $u\mapsto u$, $a\mapsto a$ and maps  $\NC$ to $0$.

The Bredon cohomology of $\wt{\Es}C_2$ is
		\[
		\rH^\star_{\Br}(\wt{\Es }C_2,\ul{\Z/2}) \cong \Sigma^{2-2\sigma} \Z/2[ a^{\pm 1}, u^{-1}],
		\]
		 see e.g., \cite[Lemma 28]{Car:op}. The right-hand side is a $\Z/2[a,u]$-module and hence an $\MMt$-module  (elements of $\NC$ act by $0$) and this isomorphism is an $\MMt$-module isomorphism. 
The negative cone is a quotient of this module, 
\[
\NC \cong  \Sigma^{2-2\sigma} \Z/2[ a^{\pm 1}, u^{-1}]/a\cd\Z/2[a,u^{-1}].
\]	
		The $\MMt$-module map
		\begin{equation}\label{eqn:wtEtopt}
		 \Sigma^{2-2\sigma} \Z/2[ a^{\pm 1}, u^{-1}]\to \MMt,
		\end{equation}
		induced by $S^0\to \wt\Es C_2$, is this quotient followed by the inclusion of the negative cone into $\MMt$. Explicitly, it is the map
		\[
		\frac{\theta}{u^n}a^m\mapsto \begin{cases} 
			\displaystyle{\frac{\theta}{a^{-m}u^n} } & m\leq 0 \vspace{1mm}\\
			0 & m> 0
		\end{cases}
		\]
		where $\theta\in \Sigma^{2-2\sigma} \Z/2[a^{\pm 1}, u^{-1}]$ denotes the element corresponding to $1\in \Z/2[u^{-1}, x^{\pm 1}]$.

\begin{figure}[H]
\begin{subfigure}[b]{0.45\textwidth}
	\begin{tikzpicture}[scale=0.6]
\path[step=1.0,black,thin,xshift=0.5cm,yshift=0.5cm] (-6.5,-6) grid (6,6);
		\path[->, very thick] (6.3,0) edge (7.5,0);
		\draw[style=mygrid] (-5.0,-5.0) grid (5.0,5.0);
		\path[style=myfill] (0,0) -- (-5,5) -- (0,5) -- (0,0);
		\path[style=myfill] (2,-2) -- (5,-5) -- (2,-5) -- (2,-2);
		\draw[->, thin, gray!80] (-5.0,0) -- (5.3,0) ;
		\draw (5.5,0) node[black] {$i$};
		\draw[->, thin, gray!80] (0,-5.0) -- (0,5.3);
		\draw (0,5.5) node[black] {$p\cd \sigma$};
		\foreach \y in {0,...,5}{
		\foreach \x in {-\y,...,0}{
		\fill (\x,\y) circle (2.5pt);
	}
}
		\foreach \y in {-5,...,-2}{
		\foreach \x in {2,...,-\y}{
		\fill (\x,\y) circle (2.5pt);
	}
}		
		\draw (2,-4) node[left]{\small $\frac{\theta}{a^2}$};
		\draw (2,-5) node[left]{\small $\frac{\theta}{a^3}$};
		\draw (3.05,-3) node[right, ] {\small$\frac{\theta}{u}$};
		\draw (4.05,-4) node[ right]{\small $\frac{\theta}{u^2}$};
		\draw (5.05,-5) node[right]{ \small$\frac{\theta}{u^3}$};
		\draw (-1.05,1) node[left] {\small$u$};
		\draw(-2.05,2) node[left]{\small $u^2$};
		\draw(-3.05,3) node[left]{\small $u^3$};
		\draw(-4.05,4) node[left]{\small $u^4$};
		\draw (-5.05,5) node[left] {\small$u^5$};
		
		\draw (0,2) node[right]{\small $a^2$};
		\draw (0,3) node[right]{\small$a^3$};
		\draw (0,4) node[right]{\small$a^4$};
		\draw (0,5) node[right]{ \small$a^5$};
		
		\draw (0,1) node[right] {\small$a$};
		\draw (2,-2) node[above] {\small$\theta$};
		\draw (2,-3) node[left] {\small$\frac{\theta}{a}$};

		\draw[ red] (0, 0) -- (-5,5);
		\draw[ red] (0, 1) -- (-4,5);
		\draw[ red] (0, 2) -- (-3,5);
		\draw[ red] (0, 3) -- (-2,5);
		\draw[red] (0,4) -- (-1,5);
		\draw[ red] (2, -2) -- (5,-5);
		\draw[ red] (2, -3) -- (4,-5);
		\draw[red] (2, -4)	--(3,-5);
		
		\draw[blue] (0,0)  to  (0,5);
		\draw[blue] (-1,1)  to  (-1,5);
		\draw[blue] (-2,2)  to  (-2,5); 
		\draw[blue] (-3,3)  to  (-3,5);
		\draw[blue] (-4,4) to (-4,5);
		\draw[blue] (2,-2)  to  (2,-5);
		\draw[blue] (3,-3)  to  (3,-5);
		\draw[blue] (4,-4) to (4,-5);
		\node at (-4.5, -3.55) [fill = white, right, inner sep = 3pt] {{$\MMt$} } ;
	\end{tikzpicture} 
\end{subfigure}
%\hfill
\hspace{7mm}
\begin{subfigure}[b]{0.45\textwidth}
\begin{tikzpicture}[scale=0.6]
\path[step=1.0,black,thin,xshift=0.5cm,yshift=0.5cm] (-6.5,-6) grid (6,6);

		\draw[style=mygrid] (-5.0,-5.0) grid (5.0,5.0);
		\path[style=myfill] (0,0) -- (-5,5) -- (0,5) -- (0,0);
		\draw[->, thin, gray!80] (-5.0,0) -- (5.3,0) ;
		\draw (5.5,0) node[black] {$i$};
		\draw[->, thin, gray!80] (0,-5.0) -- (0,5.3);
		\draw (0,5.5) node[black] {$p\cd \sigma$};

		\foreach \y in {-5,...,5}{
		\foreach \x in {-\y,...,5}{
		\fill (\x,\y) circle (2.5pt);
	}
}

		\draw (-1,1) node[left] {\small $u$};
		\draw (-2,2) node[left] {\small $u^2$};
		\draw (-3,3) node[left] {\small $u^3$};
		\draw (-4,4) node[left] {\small $u^4$};
		\draw (-5,5) node[left] {\small $u^5$};

		\draw (0,0) node[left] {\small$1$};
		\draw (0,1) node[right] {\small$a$};
		\draw (0,2) node[right] {\small$a^2$};
		\draw (0,3) node[right] {\small$a^3$};
		\draw (0,4) node[right] {\small$a^4$};
		\draw (0,5) node[right] {\small$a^5$};
		\draw (1,-1) node[left] {\small$u^{-1}$};
		\draw (2,-2) node[left] {\small$u^{-2}$};
		\draw (3,-3) node[left] {\small$u^{-3}$};
		\draw (4,-4) node[left] {\small$u^{-4}$};
		\draw (5,-5) node[left] {\small$u^{-5}$};

		\draw[ red] (5, -5) -- (-5,5);
		\draw[ red] (5, -4) -- (-4,5);
		\draw[ red] (5, -3) -- (-3,5);
		\draw[ red] (5, -2) -- (-2,5);
		\draw[red] (5,-1) -- (-1,5);
\draw[red] (5,0) -- (0,5);
\draw[red] (5,1) -- (1,5);
\draw[red] (5,2) -- (2,5);
\draw[red] (5,3) -- (3,5);
\draw[red] (5,4) -- (4,5);

		\draw[blue] (5,-5)  to  (5,5);
		\draw[blue] (4,-4)  to  (4,5);
		\draw[blue] (3,-3)  to  (3,5);
		\draw[blue] (2,-2)  to  (2,5);
		\draw[blue] (1,-1)  to  (1,5);		
		\draw[blue] (0,0)  to  (0,5);
		\draw[blue] (0,0)  to  (0,5);
		\draw[blue] (-1,1)  to  (-1,5);
		\draw[blue] (-2,2)  to  (-2,5); 
		\draw[blue] (-3,3)  to  (-3,5);
		\draw[blue] (-4,4) to (-4,5);
		\node at (-4.5, -3.55) [fill = white, right, inner sep = 3pt] {{$H^{i+p\sigma}_{\Br}(\Es C_2, \Z/2)$} } ;
	\end{tikzpicture}
\end{subfigure}
\\
\begin{subfigure}[b]{0.45\textwidth}
\begin{tikzpicture}[scale=0.6]
\path[step=1.0,black,thin,xshift=0.5cm,yshift=0.5cm] (-6.5,-6) grid (6,6);
	\path[->,very thick] (0,6.3) edge (0,7.5);
		\draw[style=mygrid] (-5.0,-5.0) grid (5.0,5.0);
	\path[style=myfill] (2,-2) -- (5,-5) -- (2,-5) -- (2,-2);
		\draw[->, thin, gray!80] (-5.0,0) -- (5.3,0) ;
		\draw (5.5,0) node[black] {$i$};
		\draw[->, thin, gray!80] (0,-5.0) -- (0,5.3);
		\draw (0,5.5) node[black] {$p\cd \sigma$};

		\foreach \x in {2,...,5}{
		\foreach \y in {-5,...,5}{
		\fill (\x,\y) circle (2.5pt);
	}
}

		\draw[ red] (5, -5) -- (2,-2);
		\draw[ red] (5, -4) -- (2,-1);
		\draw[ red] (5, -3) -- (2,0);
		\draw[ red] (5, -2) -- (2,1);
		\draw[red] (5,-1) -- (2,2);
\draw[red] (5,0) -- (2,3);
\draw[red] (5,1) -- (2,4);
\draw[red] (4,-5) -- (2,-3);
\draw[red] (3,-5) -- (2,-4);
\draw[red] (5,2) -- (2,5);
\draw[red] (5,3) -- (3,5);
\draw[red] (5,4) -- (4,5);

			\draw[blue] (2,-5)  to  (2,5);
\draw[blue] (3,-5)  to  (3,5);
\draw[blue] (4,-5)  to  (4,5);
\draw[blue] (5,-5)  to  (5,5);

\node at (2,5) [left] {\small$\theta a^7$};
\node at (2,4) [left] {\small$\theta a^6$};
\node at (2,3) [left] {\small$\theta a^5$};
\node at (2,2) [left] {\small$\theta a^4$};
\node at (2,1) [left] {\small$\theta a^3$};
\node at (2,0) [left] {\small$\theta a^2$};
\node at (2,-1) [left] {\small$\theta a$};
\node at (2,-2) [left] {\small$\theta$};
		\node at (2,-3) [left] {\small$\frac{\theta}{a}$};
\node at (2,-4) [left] {\small$\frac{\theta}{a^2}$};
\node at (2,-5) [left] {\small$\frac{\theta}{a^3}$};

\draw (3.05,-3) node[right] {\small$\frac{\theta}{u}$};
\draw (4.05,-4) node[right]{\small $\frac{\theta}{u^2}$};
\draw (5.05,-5) node[right]{\small $\frac{\theta}{u^3}$};
		\node at (-4.5, -3.55) [fill = white, right, inner sep = 3pt] {{$\rH^{i+p\sigma}_{\Br}(\wt\Es C_2, \Z/2)$} } ;
	\end{tikzpicture}
\end{subfigure}
%\hfill
\hspace{7mm}
\begin{subfigure}[b]{0.45\textwidth}
	\begin{tikzpicture}[scale=0.6]
		\path[step=1.0,black,thin,xshift=0.5cm,yshift=0.5cm] (-6.5,-6) grid (6,6);
		
		\draw[style=mygrid] (-5.0,-5.0) grid (5.0,5.0);
		\draw[->, thin, gray!80] (-5.0,0) -- (5.3,0) ;
		\draw (5.5,0) node[black] {$i$};
		\draw[->, thin, gray!80] (0,-5.0) -- (0,5.3);
		\draw (0,5.5) node[black] {$p\cd \sigma$};
		
		\foreach \x in {2,...,3}{
			\foreach \y in {-5,...,5}{
				\fill (\x,\y) circle (2.5pt);
			}
		}
		
%		\draw[ red] (3, -5) -- (2,-4);
%		\draw[ red] (4, -5) -- (2,-3);
%		\draw[ red] (4, -4) -- (2,-2);
%		\draw[ red] (4, -3) -- (2,-1);
%		\draw[ red] (4, -2) -- (2,0);
%		\draw[red] (4,-1) -- (2,1);
%		\draw[red] (4,0) -- (2,2);
%		\draw[red] (4,1) -- (2,3);
%		\draw[red] (4,2) -- (2,4);
%		\draw[red] (4,3) -- (2,5);
%		\draw[red] (4,4) -- (3,5);
		\draw[ red] (3, -5) -- (2,-4);
%		\draw[ red] (4, -5) -- (2,-3);
		\draw[ red] (3, -4) -- (2,-3);
		\draw[ red] (3, -3) -- (2,-2);
		\draw[ red] (3, -2) -- (2,-1);
		\draw[red] (3,-1) -- (2,0);
		\draw[red] (3,0) -- (2,1);
		\draw[red] (3,1) -- (2,2);
		\draw[red] (3,2) -- (2,3);
		\draw[red] (3,3) -- (2,4);
		\draw[red] (3,4) -- (2,5);

		\draw[blue] (2,-5)  to  (2,5);
		\draw[blue] (3,-5)  to  (3,5);
%		\draw[blue] (4,-5)  to  (4,5);

		\node at (2,5) [left] {\small$\theta a^7$};
		\node at (2,4) [left] {\small$\theta a^6$};
		\node at (2,3) [left] {\small$\theta a^5$};
		\node at (2,2) [left] {\small$\theta a^4$};
		\node at (2,1) [left] {\small$\theta a^3$};
		\node at (2,0) [left] {\small$\theta a^2$};
		\node at (2,-1) [left] {\small$\theta a$};
		\node at (2,-2) [left] {\small$\theta$};
		\node at (2,-3) [left] {\small$\frac{\theta}{a}$};
		\node at (2,-4) [left] {\small$\frac{\theta}{a^2}$};
		\node at (2,-5) [left] {\small$\frac{\theta}{a^3}$};
		
		\draw (3.05,-3) node[right] {\small$\frac{\theta}{u}$};
%		\draw (4.05,-4) node[right]{\small $\frac{\theta}{u^2}$};
		\node at (-4.5, -3.55) [fill = white, right, inner sep = 3pt] {$\block_m,\,\,m=1$ } ;
	\end{tikzpicture}
\end{subfigure}
	\caption{$\bullet$ denotes a copy of $\Z/2$. Shading indicates the assembly of $\MMt$ from (parts of) $H^{\star}_{\Br}(\Es C_2)$ and $\rH^\star(\wt\Es C_2)$. }
	\label{point2}
\end{figure}

\begin{remark}\label{rem:blocks}
	It will be convenient to have  notation for certain submodules of 
$\rH^{\star}_{\Br}(\wt \Es C_2,\ul{\Z/2})$. For $i\geq 1$, write
	\[
	\block_i:= \Sigma^{2-2\sigma} \Z/2[ a^{\pm 1}, u^{-1}]/(u^{-2i}).
	\]
	This is a $\Z/2[a,u]$-module and hence an $\MMt$-module (elements of $\NC$ act by $0$).  Note that there is an identification
	\[
	\block_i \cong \bigoplus_{a\leq 2i+1}\rH^{a+*\sigma}_{\Br}(\wt\Es C_2,\ul{\Z/2}).
	\]
	For $i\leq 0$ we  set $B_i=0$. There are canonical $\MMt$-module quotients $\block_{i+1}\to \block_{i}$. Moreover, there are also  $\MMt$-module inclusions 
	\begin{equation}\label{eqn:incl}
	\block_i\hookrightarrow \block_{i+1}
	\end{equation}
	defined by the assignment $\frac{\theta}{u^n}a^m\mapsto\frac{\theta}{u^n}a^m$. 
	Composing  the  inclusion $\block_i\ \hookrightarrow \Sigma^{2-2\sigma} \Z/2[ a^{\pm 1}, u^{-1}]$ with \eqref{eqn:wtEtopt}, yields the $\MMt$-module map
	\begin{equation}\label{eqn:BiM}
		\block_i\to \MMt.
	\end{equation}
	Lastly we note that there are $\MMt$-module maps 
	\begin{equation}\label{eqn:byu}
	\cd u^2:\block_{i+1}\to \block_{i}, 
	\end{equation}
	defined by composing the quotient map with 
	multiplication by $u^2$, 
	\[
	\Sigma^{2-2\sigma}\Z/2[u^{-1}, a^{\pm 1}]/u^{-2i + 2} \xrightarrow{\cd u^2}
	\Sigma^{2-2\sigma}\Z/2[u^{-1}, a^{\pm 1}]/u^{-2i}.
	\]
	Explicitly it is the map  
	\[
	\frac{\theta}{u^n}a^m\mapsto \begin{cases} 
		\displaystyle{\frac{\theta}{u^{n-2}}a^m } & n\geq 2 \vspace{1mm}\\
		0 & \textrm{else}.
	\end{cases}
	\]
\end{remark}

\begin{figure}
	\begin{subfigure}[b]{0.45\textwidth}
		\begin{tikzpicture}[scale=0.6]
			\path[->,  thick] (9.5,1) edge node[midway, above]{$\cd u^2$}(11,1);
			\path[left hook->, thick] (11,-1) edge (9.5,-1);
			\draw[style=mygrid] (-1.0,-4.0) grid (9.0,4.0);
			\path[style=myfill] (1.8,-4.2) -- (1.8, 4.2) -- (3.2,4.2) -- (3.2,-4.2) -- (1.8,-4.2);
			\draw[->, thin, gray!80] (-1.0,0) -- (9.3,0) ;
			\draw[->, thin, gray!80] (0,-4.0) -- (0,4.3);
			
			\foreach \x in {2,...,8}{
				\foreach \y in {-4,...,4}{
					\fill (\x,\y) circle (2.5pt);
				}
			}
			\foreach \x in {2,...,7}{
			\foreach \y in {-3,...,4}{
				\path[red] (\x,\y) edge (\x+1, \y-1);
			}	
		}
			\foreach \x in {2,...,8}{
					\path[blue] (\x,-4) edge (\x, 4);
			}

			\node at (2,4) [left] {\small$\theta a^6$};
			\node at (2,3) [left] {\small$\theta a^5$};
			\node at (2,2) [left] {\small$\theta a^4$};
			\node at (2,1) [left] {\small$\theta a^3$};
			\node at (2,0) [left] {\small$\theta a^2$};
			\node at (2,-1) [left] {\small$\theta a$};
			\node at (2,-2) [left] {\small$\theta$};
			\node at (2,-3) [left] {\small$\frac{\theta}{a}$};
			\node at (2,-4) [left] {\small$\frac{\theta}{a^2}$};

			\draw (3.05,-3) node[right] {\small$\frac{\theta}{u}$};
			\draw (4.05,-4) node[right]{\small $\frac{\theta}{u^2}$};
			\node at (-0.9, -2.6) [fill = white, anchor=west] {$\block_{m+1}$ } ;
		\end{tikzpicture}
		\end{subfigure}
	\hspace{2mm}
		\begin{subfigure}[b]{0.45\textwidth}
		\begin{tikzpicture}[scale=0.6]

			\draw[style=mygrid] (-1.0,-4.0) grid (9.0,4.0);
			\draw[->, thin, gray!80] (-1.0,0) -- (9.3,0) ;
			\draw[->, thin, gray!80] (0,-4.0) -- (0,4.3);

			\foreach \x in {2,...,6}{
				\foreach \y in {-4,...,4}{
					\fill (\x,\y) circle (2.5pt);
				}
			}
			\foreach \x in {2,...,5}{
				\foreach \y in {-3,...,4}{
					\path[red] (\x,\y) edge (\x+1, \y-1);
				}	
			}
			\foreach \x in {2,...,6}{
				\path[blue] (\x,-4) edge (\x, 4);
			}

			\node at (2,4) [left] {\small$\theta a^6$};
			\node at (2,3) [left] {\small$\theta a^5$};
			\node at (2,2) [left] {\small$\theta a^4$};
			\node at (2,1) [left] {\small$\theta a^3$};
			\node at (2,0) [left] {\small$\theta a^2$};
			\node at (2,-1) [left] {\small$\theta a$};
			\node at (2,-2) [left] {\small$\theta$};
			\node at (2,-3) [left] {\small$\frac{\theta}{a}$};
			\node at (2,-4) [left] {\small$\frac{\theta}{a^2}$};

			\draw (3.05,-3) node[right] {\small$\frac{\theta}{u}$};
			\draw (4.05,-4) node[right]{\small $\frac{\theta}{u^2}$};
			
			\node at (-0.9, -2.6) [fill = white, anchor=west] {$\block_m$ } ;
			
		\end{tikzpicture}
	\end{subfigure}
	\caption{The modules $\block_i$ and the maps \eqref{eqn:incl} and \eqref{eqn:byu} between them. The shaded region indicates the kernel of $\cd u^2:\block_{m+1}\to \block_{m}$. }
	\label{fig:Bmaps}
\end{figure}

\subsection{Our computation}
 We describe the main computations.  
 Bredon motivic cohomology is graded by a $4$-tuple of integers, written as $(a+p\sigma, b+q\sigma)$; 
 this $4$-tuple is viewed as a pair of $C_2$-representations (here $\sigma$ denotes the sign representation), the first one is the \emph{cohomological degree} and the second representation is the \emph{weight}.  The grading by $4$-tuples presents an organizational problem.  Our solution is to  organize Bredon motivic cohomology into $\MMt$-modules, which we now explain. 
 If $X$ is a complex variety with $C_2$-action, 
Betti realization induces a comparison homomorphism
 \[
 \Re:H^{a+p\sigma, b+q\sigma}_{C_2}(X, \ul{\Z/2}) \to H^{a+p\sigma}_{\Br}(X(\C), \ul{\Z/2})
 \]
 between Bredon motivic cohomology of $X$ and the Bredon cohomology of the $C_2$-topological space $X(\C)$. When $X=\spec(\C)$, this induces an isomorphism of bigraded rings by \aref{prop:ptiso},
 \[
 H^{\star, 0}_{C_2}(\C, \ul{\Z/2}) \xrightarrow{\cong}
 \MMt.
 \]
 In particular, we can view $H^{\star,b+q\sigma}_{C_2}(X,\ul{\Z/2})$ as an $\MMt$-module, for each $b,q$.

The free motivic $C_2$-space $\EG C_2$ can be modeled as $\A(\infty \sigma)\setminus 0$, where $\A(n\sigma)$ is the $n$-dimensional sign representation. There is a motivic isotropy separation sequence
$\EG C_{2+}\rightarrow S^0\rightarrow \EGt C _2$, where $\EGt C_2$ is defined so that this a cofiber sequence (see \aref{sub:equivariant} for details), which breaks the problem of computing $H^{\star,\star}_{C_2}(\C,\ul{\Z/2})$ into pieces.

Each of $\EG C_2$ and $\wt \EG C_2$ determine a region of   $H^{\star,\star}_{C_2}(\C, \ul{\Z/2})$ and Betti realization determines the remaining nonzero region, see \aref{thm:additive}. These regions are shown in \aref{TEST1}. In this picture we have projected onto the plane determined by the weight. In particular, the displayed elements do not all live the same cohomological degree.

\begin{figure}[H]
	\begin{tikzpicture}[scale=0.7]
		\draw[->, thin, gray!40] (-6.0,0) -- (6.3,0) node[right, black] {$b$};
		\draw[->, thin,  gray!40] (0,-6.0) -- (0,6.3) node[above, black] {$q\cd\sigma$};
		\path[style = myfill1] (0,0) -- (-6,6) -- (0,6) -- (0,0);
		\path[style = myfill2] (0,0) -- (0,6) -- (6,6) -- (6,-6) -- (0,0);
		\path[style = myfill3] (1,-1) -- (1,-6) -- (6,-6) -- (1,-1);
		\draw[ultra thick] (0,0) -- (-6,6);
		\draw[ultra thick] (0,0) -- (0,6);
		\draw[ultra thick] (0,0) -- (6,-6);
		\draw[ultra thick] (1,-1) -- (1,-6); 
		\node at (-2.7,3) [right] {$H^{\star,\star}_{C_2}(\EG C_2)$};
		\node at (1.3,3) [right] {$H^{\star}_{\Br}(\pt)$};
		\node at (1.3,-4.3) [right] {$\rH^{\star,\star}_{C_2}(\wt\EG C_2)$};
		\fill (0,1) circle(3pt) node[above right] {$\tau_\sigma$};
		\fill (0,0) circle(3pt) node[above right] {$1$};
		\fill (-1,1) circle(3pt) node[below left] {$\xi$};
		\fill (1,-1) circle(3pt) node[above right] {$\mu$};
		
	\end{tikzpicture} \hfill 
	\caption{Regions of {$H^{\star, b+q\sigma}_{C_2}(\C,\Z/2)$} determined by $\EG C_2$, Betti realization, and $\wt\EG C_2$. The degrees of the displayed elements are $|\xi|=(-2+2\sigma, -1+\sigma)$, $|\mu| = (0,1-\sigma)$, $|\tau_\sigma| = (0,\sigma)$.   }
	\label{TEST1}
\end{figure}

In integer bidegrees, the Bredon motivic cohomology of $\EG C_2$ agrees with ordinary motivic cohomology of 
$\EG C_2/C_2 = \BG C_2$. 
 The motivic cohomology of $\BG C_2$ was computed by Voevodsky \cite[Theorem 6.10]{Voc}. In our case, where the base field is $\C$, his computation takes the form
 \begin{equation}\label{eqn:voebg}
 H^{*,*}(\BG C _2,\Z/2)\cong \Z/2[\tau][e_1,e_2]/(e_1^2=e_2\tau),
 \end{equation}
  where $|e_1| = (1,1)$, $|e_2| = (2,1)$, and $|\tau| = (0,1)$. 
In \aref{5},  we leverage Voevodsky's computation, Betti realization, and that the cohomology of 
$\EG C_2$ is $(-2+2\sigma, -1+\sigma)$-periodic, to find an equivariant lift of $\tau$ to an element 
$\tau_\sigma\in H^{0,\sigma}_{C_2}(\EG C_2)$ 
such that multiplication by $\tau_\sigma$ is an isomorphism  whenever $b+q\geq 0$. Thus we find that 
	\[
  	H^{\star,\star}_{C_2}(\EG C_2, \ul{\Z/2})  {\cong} 
  	\MMt[\xi^{\pm 1}, \tau_\sigma],
  	\]
  	where $|\xi| = (-2+2\sigma, -1+\sigma)$.

The cohomology of $\wt\EG C_2$ is both $(\sigma,0)$ and $(0,\sigma)$-periodic and Betti realization identifies  
$\rH^{\star, b+q\sigma}_{C_2}(\wt\EG C_2)$  with the sub-$\MMt$-module of $\rH^{\star}_{\Br}(\wt\Es C_2)$ 
\[
\rH^{\star, b+q\sigma}_{C_2}(\wt\EG C_2,\ul{\Z/2})\cong \block_{b},
\]
where $\block_{b}=  \Sigma^{2-2\sigma} \Z/2[ a^{\pm 1}, u^{-1}]/(u^{-2b})$ (if $b\geq 1$, it is $0$ otherwise) is the $\MMt$-module introduced in \aref{rem:blocks}. We keep track of weights via the elements $\tau_\sigma$ and $\mu$, where 
$|\tau_\sigma| = (0,\sigma)$ and 
$|\mu|= (0,1-\sigma)$, so that
\[
\rH^{\star,\star}_{C_2}(\wt\EG C_2,\ul{\Z/2}) \cong \bigoplus_{i\geq 1, j\in \Z}\hspace{-0.8em}\block_i\hspace{-1pt}\left\{\mu^i\tau_\sigma^j\right\}.
\]

Having determined the $\MMt$-module structures in all of the regions in \aref{TEST1}, we determine the multiplicative structure in  \aref{thm:ptring}, where we find there is an isomorphism of $\MMt$-algebras 
	\begin{equation}\label{eqn:H(C)i}
 H_{C_2}^{\star,\star}(\C,\ul{\Z/2}) \cong	\MMt[\xi,\tau_\sigma, \mu]/(\xi\mu-u^2)\oplus\Big(\bigoplus_{i,j\geq 1}\hspace{-0.1em}\block_i\hspace{-1pt}\left\{\frac{\mu^i}{\tau_\sigma^j}\right\}\Big).
	\end{equation}
	The left hand summand comes from the regions determined by $\EG C_2$ and Betti realization in \aref{TEST1}. The right hand summand arises from the region determined by $\wt\EG C_2$. The multiplicative structure involving elements in this region is determined as follows.
	\begin{enumerate}[(i)]
		\item $\cd\mu:\block_{i}\left\{\frac{\mu^i}{\tau_\sigma^j}\right\}\to \block_{i+1}\left\{\frac{\mu^{i+1}}{\tau_\sigma^j}\right\}$ is the inclusion \eqref{eqn:incl}.
		\item $\cd\xi: \block_{i}\{\frac{\mu^i}{\tau_\sigma^j}\}\to \block_{i-1}\left\{\frac{\mu^{i-1}}{\tau_\sigma^j}\right\}$ is \eqref{eqn:byu}, multiplication by $u^2$.

		\item  $\cd\tau_\sigma: \block_{i}\left\{\frac{\mu^i}{\tau_\sigma^{j}}\right\}\to\block_{i}\left\{\frac{\mu^i}{\tau_\sigma^{j-1}}\right\}$ is the identity if $j\geq 2$.
		\item a $\tau_\sigma$-multiplication starting in
		weights $i-(i+1)\sigma$, is the map $\cd\tau_\sigma:B_i\{\frac{\mu^i}{\tau_\sigma}\} \to \MMt\{\mu^i\}$ determined by \eqref{eqn:BiM}. These are exactly the multiplications crossing the border from the region determined by $\wt\EG C_2$ into the region determined by Betti realization, see \aref{TEST1}.

		\item All products in the right hand summand are trivial.
	\end{enumerate} 
\vspace{2mm}

\noindent {\bf Outline.}
A brief outline of the paper is as follows. Sections 1 and 2 are devoted to the introduction and preliminaries. The main computations of Bredon motivic cohomology are carried out in Sections 3 and 4.  In the last section,  we generalize the results to any algebraically closed field of characteristic zero via a rigidity result for Bredon motivic cohomology.
  \vspace{1mm}\\
 \noindent {\bf{Notation.}} 
\begin{itemize}[label = {$\cdot$}]
\item $H^{a+p\sigma,b+q\sigma} _{C _2}(X,\ul{A})$ is the Bredon motivic cohomology of a $C _2$-smooth scheme, with coefficients $A$. 
\item $H^{n,q}(X,A)$ is motivic cohomology of a smooth scheme $X$.
\item $H^{a+b\sigma}_{\Br}(X,\ul{A})$ is the Bredon cohomology of a $C_2$-topological space $X$ with coefficients in the constant Mackey functor $\underline{A}$.

\item We write $\star$ for an ${\rm RO}(C_2)$-grading and $*$ for a $\Z$-grading. \\For example, $H^{\star,\star} _{C _2}(X)=\oplus_{a,b,p,q}  H^{a+p\sigma,b+q\sigma} _{C _2}(X)$ and $H^{*,*}(X)=\oplus_{a,b} H^{a,b}(X)$.
  \item  $S^\sigma$ is the topological sphere associated to the real sign representation $\sigma$.
\item  All $C_2$-varieties are over $\C$ and we view $C_2$ as a group scheme by $C_2=\spec(\C)\sqcup \spec(\C)$.
\item $\MMtn:=H^{\star}_{\Br}(\pt, \ul{\Z/n})$.
 
  \end{itemize}
\vspace{1mm}

  \noindent {\bf{Acknowledgements.}} The authors wish to thank Institut Mittag-Leffler, Stockholm, where the research of this paper started in 2017 during the program on Algebro-Geometric and Homotopical Methods. Heller and {\O}stv{\ae}r thank the Isaac Newton Institute for Mathematical Sciences for support and hospitality during the program on K-theory, algebraic cycles and motivic homotopy theory in 2020. We are grateful to the referee for useful comments on a previous draft of this paper.

 \section{Preliminaries}
 We record some background on Bredon motivic  cohomology.

 \subsection{Equivariant motivic homotopy}\label{sub:equivariant}
 The stable equivariant motivic homotopy category $\SH^{C_2}(k)$ is the stabilization of Voevodsky's category of equivariant motivic spaces  \cite{Del}, with respect to Thom spaces of representations. We recall a few key facts and the notation we use in the case $G=C_2$. See \cite{Hoy}, \cite{HOV1}, or \cite{GH} for details.

 Let $V=a+p\sigma$ be  a $C_2$-representation, where $a$ denotes the $a$-dimensional trivial representation and $p\sigma$ is the $p$-dimensional sign representation. 
 We write $\A(V)$ and $\P(V)$ for the  $C_2$-schemes $\A^{\dim(V)}$ and $\P^{\dim(V)-1}$ equipped with the corresponding action coming from $V$. The associated motivic representation sphere is 
 \[
 T^V:=\P(V\oplus 1)/\P(V).
 \] 
 Indexing is based on the following four spheres. There are two topological spheres $S^1$, $S^\sigma$ and two algebro-geometric spheres $S_t=(\A^1\setminus\{0\},1)$ equipped with trivial action, and $S^{\sigma}_t=(\A^1\setminus\{0\},1)$ equipped the 
$C_2$-action $x\rightarrow x^{-1}$.  We write
 \[
 S^{a+p\sigma,b+q\sigma}:=S^{a-b}\wedge S^{(p-q)\sigma}\wedge S^b _t\wedge S^{q\sigma}_t.
 \]
 In this indexing, we have $T\simeq S^{2,1}$ and $T^\sigma\simeq S^{2\sigma, \sigma}$. 
The stable equivariant motivic homotopy category
 $\SH^{C_2}(k)$ is the stabilization of (based) $C_2$-motivic spaces with respect to the motivic sphere 
 $T^{\rho}$ corresponding to the regular representation $\rho= 1 +\sigma$. 
 
 We make use of two fundamental cofiber sequences in 
 $\SH^{C_2}(k)$. The first is
 \begin{equation}\label{eqn:cof1}
 C_{2+}\to S^{0}\to S^{\sigma}.
 \end{equation}
 The second is
 \begin{equation}\label{eqn:cof2}
 \EG C_{2+}\rightarrow S^0\rightarrow \EGt C _2.
 \end{equation}
 Here, $\EG C_{2}$ is the universal free motivic $C_2$-space. It has a geometric model, $\EG C _2\simeq \colim _n\A(n\sigma)\setminus \{0\}$, see \cite[Section 3]{GH}.
The quotient  
$\EG C _2/C _2\simeq \colim _n \left(\A(n\sigma)\setminus \{0\}\right)/C _2$ is the geometric classifying space $\BG C_2$ constructed by Morel-Voevodsky \cite{MVo} and Totaro \cite{T}.  
 Note also that  
   $\EGt C _2=\colim _n S^{2n\sigma,n\sigma}$.
 In particular, the maps $S^0\to T^\sigma$ and $S^0\to S^\sigma$ induce equivalences
 \[
 	\EGt C_2 \xto{\simeq} T^{\sigma}\wedge\EGt C_2\,\,\,\textrm{  and  }\,\,\,
 	\EGt C_2 \xto{\simeq} S^{\sigma}\wedge\EGt C_2,
 \]
  see \cite[Proposition 2.9]{HOV1}.

 Equipping a variety with trivial action $\Sm_{k}\to \Sm^{C_2}_k$ induces 
a functor $\SH(k)\to \SH^{C_2}(k)$.
 
 \subsection{Bredon motivic cohomology}

 Bredon motivic cohomology is represented in $\SH^{C_2}(k)$ by  
 the spectrum $M\ul{A}$ associated to an abelian group $A$, where 
 $M\ul{A} _n=A _{tr,C _2}(T^{n\rho})$ is the free presheaf with equivariant transfers, see \cite{HOV1} for details.

  \begin{definition}[\cite{HOV1}] The Bredon motivic cohomology of a motivic $C _2$-spectrum $E$ with coefficients in an abelian group $A$ is defined by 
  \[
  \rH^{a+p\sigma,b+q\sigma} _{C_2}(E,\ul{A})=[E,S^{a+p\sigma,b+q\sigma}\wedge M\ul{A}] _{\SH^{C _2}(k)}.
  \]
  \end{definition}

If $X\in \Sm^{C_2}_k$ we typically write
\[
H^{a+p\sigma,b+q\sigma}_{C_2}(X, \ul{A}) := 
\rH^{a+p\sigma,b+q\sigma}_{C_2}(X_+, \ul{A}).
\]
 When $A$ is a ring, then $H^{\star,\star} _{C _2}(X, \ul{A})$ is a graded commutative ring by \cite[Proposition 3.24]{HOV1}. Specifically this means that if $x\in H^{a+p\sigma, b+q\sigma} _{C _2}(X, \ul{A})$ and $y\in H^{c+s\sigma, d+t\sigma} _{C _2}(X, \ul{A})$, then
\[
x\cup y = (-1)^{ac+ps}y\cup x.
\]

A few features of this theory, which we use are the following (see \cite{HOV1}).
\begin{itemize}[label = {$\cdot$}]
\item If  $E$ is in the image of $\SH(k)\to \SH^{C_2}(k)$, i.e. it has ``trivial action", then  there is an isomorphism in integral bidegrees with ordinary motivic cohomology,
\[
\rH^{a,b}_{C _2}(E, \ul{A}) \cong \rH^{a,b}(E, A).
\]

\item If $X$ has free action, then there is an isomorphism in integral bidegrees with ordinary motivic cohomology,
\[
H^{a,b} _{C _2}(X,\ul{A})\cong H^{a,b}(X/C_2, A).
\]
\item  $H^{\star,\star}_{C_2}(\EG C_2, \ul{A})$  is  $(-2+2\sigma,-1+\sigma)$-periodic.
\end{itemize}

\subsection{Betti realization}

  The map of sites 
  $\Sm _\C^{C_2}\rightarrow {\rm Top}^{C_2}$, given by $X\rightarrow X(\C)$, where the set of complex points is equipped with the analytic topology,  extends to a functor
  $\Re:\SH^{C _2}(\C)\rightarrow \SH^{C _2}$ between the stable equivariant motivic homotopy category over $\C$ and the classical stable equivariant homotopy category. We refer to this functor as the Betti realization.

   The indexing of the spheres above was chosen to interact well with complex Betti realization; we have 
$\Re (S^{a+p\sigma,b+q\sigma})\simeq S^{a+p\sigma}$.  
  
  By \cite[Theorem A.29]{HOV1}, $\Re(M\ul{A})  \simeq H\ul{A}$, where $H\ul{A}$ is the equivariant Eilenberg-MacLane spectrum associated to the constant Mackey functor $\ul{A}$. 
   In particular, for any smooth $C _2$-scheme over $\C$ there is a map
\[
\Re :H^{a+p\sigma,b+q\sigma} _{C _2}(X, \ul{A})\rightarrow H^{a+p\sigma}_{\Br}(X(\C),\underline{A}).
\]

  Betti realization takes the cofiber sequences \eqref{eqn:cof1} and \eqref{eqn:cof2}
  to the corresponding ones in $\SH^{C_2}$. \\
    Given $\XX$ in $\SH^{C_2}(\C)$,
     we obtain a comparison of long exact sequences
  \begin{equation}\label{eqn:les1}
  \begin{tikzcd}[column sep=small]
  \cdots \ar[r] & \rH^{a+(p-1)\sigma, b +q\sigma}_{C_2}(\XX, \ul{A})  \ar[r]\ar[d] & \rH^{a+p\sigma, b +q\sigma}_{C_2}(\XX, \ul{A}) \ar[r]\ar[d] & \rH^{a+p, b +q}_{}(\XX, A) \ar[r]\ar[d] & \cdots \\
  \cdots \ar[r] & \rH^{a+(p-1)\sigma}_{\Br}(\Re(\XX),\ul A) \ar[r]& \rH^{a+p\sigma}_{\Br}(\Re(\XX),\ul A) \ar[r] &   \rH^{a+p}_{\sing}(\Re(\XX), A) \ar[r] & \cdots 
  \end{tikzcd}
  \end{equation}
  as follows. The top row of this sequence is obtained by smashing $\XX$ with \eqref{eqn:cof1} and the bottom is obtained similarly via Betti realization. Here we use the identifications 
  $\rH^{a+p\sigma, b +q\sigma}_{C_2}(C_{2+}\wedge\XX)\cong \rH^{a+p, b +q}_{}(\XX)$ and $\rH^{a+p\sigma}_{\Br}(\Re(\XX)) \cong \rH^{a+p}_{\sing}(\Re(\XX))$ and the $\Re$ is compatible with these identifications, see \cite[Proposition 3.14]{HOV1}.
  Smashing $\XX$ with \eqref{eqn:cof2} we obtain the comparison of long exact sequences
  \begin{equation}\label{eqn:les2}
  \begin{tikzcd}[column sep=small]
  \cdots \ar[r] & \rH^{a+p\sigma, b +q\sigma}_{C_2}(\EGt C_2 \wedge \XX, \ul{A}) \ar[r]\ar[d] & \rH^{a+p\sigma, b +q\sigma}_{C_2}(\XX, \ul{A}) \ar[r]\ar[d] & \rH^{a+p\sigma, b +q\sigma}_{C_2}(\EG C_{2+}\wedge\XX, \ul{A}) \ar[r]\ar[d] & \cdots 
  \\
  \cdots \ar[r] & \rH^{a+p\sigma}_{\Br}(\wt\E C_2\wedge \Re(\XX), \ul{A}) \ar[r]& \rH^{a+p\sigma}_{\Br}(\Re(\XX),\ul{A}) \ar[r] & 
  \rH^{a+p\sigma}_{\Br}(\E C_{2+}\wedge \Re(\XX),\ul{A}) \ar[r] & \cdots .
  \end{tikzcd}
  \end{equation}

Using the Beilinson-Lichtenbaum theorem proved by Voevodsky and Rost \cite{Voev:miln},\cite{Voev:BK}, it is shown in \cite{HOV1} that Betti realization is an isomorphism in a suitable range, on Bredon cohomology of smooth schemes.  
In \aref{sec:BC}, we will see that a stronger result holds for $X=\spec(\C)$. For the moment, we note that in nonnegative integer weights, we always have an isomorphism for finite coefficients. 
In particular, Betti realization induces an isomorphism of rings 
$H^{\star,0}_{C _2}(\C,\ul{\Z/n})\cong \MMtn$ and 
so $H^{\star,\star}_{C_2}(X,\ul{\Z/n})$ is a module over $\MMtn$. 
In fact, by \cite{Voi} or the same argument below,  Betti realization is an isomorphism in weight zero even with $\Z$-coefficients.

\begin{proposition}\label{prop:ptiso}
Let $A$ be a finite abelian group and $b\geq 0$. Betti realization induces an isomorphism for any $a,p$
\[
H^{a+p\sigma, b}_{C_2}(\C, \ul{A})\xrightarrow{\cong} H^{a+p\sigma}_{\Br}(\pt,\ul{A}).
\]
\end{proposition}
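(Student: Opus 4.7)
The plan is to induct on the integer $p$, using the comparison of long exact sequences \eqref{eqn:les1} applied to $\XX=\spec(\C)_+$ (trivial $C_2$-action), in weight $b+q\sigma=b$ and with coefficients $\ul{A}$. Write $P(p)$ for the assertion that $\Re:H^{a+p\sigma,b}_{C_2}(\C,\ul A)\to H^{a+p\sigma}_{\Br}(\pt,\ul A)$ is an isomorphism for every $a\in\Z$. The base case $P(0)$ follows from the trivial-action identification $H^{a,b}_{C_2}(\C,\ul A)\cong H^{a,b}(\C,A)$ together with Suslin rigidity / Beilinson--Lichtenbaum (after reducing to $A=\Z/n$ by additivity): for $b\geq 0$ and every $a\in\Z$ this gives $H^{a,b}(\C,A)\cong H^a_{\sing}(\pt,A) = H^a_{\Br}(\pt,\ul A)$, compatibly with $\Re$.

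The same rigidity supplies the key input for the inductive step, since it asserts that the third vertical map of \eqref{eqn:les1}, namely $\Re:H^{a+p,b}(\C,A)\to H^{a+p}_{\sing}(\pt,A)$, is an isomorphism for every $a,p\in\Z$ whenever $b\geq 0$. Assuming $P(p-1)$, I would apply the five-lemma to the five-term window of \eqref{eqn:les1} centred at $H^{a+p\sigma,b}_{C_2}(\C,\ul A)$. Its two ``free'' neighbours $H^{(a-1)+p,b}(\C,A)$ and $H^{a+p,b}(\C,A)$ are isomorphisms under $\Re$ by rigidity, and its two ``$\sigma$-shifted'' neighbours $H^{a+(p-1)\sigma,b}_{C_2}(\C,\ul A)$ and $H^{(a+1)+(p-1)\sigma,b}_{C_2}(\C,\ul A)$ are isomorphisms by $P(p-1)$; so the central vertical map is an isomorphism too, establishing $P(p)$. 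Shifting the window one term to the left, the identical argument yields $P(p)\Rightarrow P(p-1)$. Thus $P(0)$ propagates to every integer $p$, both positive and negative.

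The main thing to check is mostly bookkeeping: one needs the identification $H^{a+p\sigma,b}_{C_2}(C_{2+}\wedge\spec(\C)_+,\ul A)\cong H^{a+p,b}(\C,A)$ noted just after \eqref{eqn:les1}, which rewrites the connecting term as ordinary motivic cohomology of $\C$, together with the compatibility of $\Re$ with the cofiber sequence $C_{2+}\to S^0\to S^\sigma$. With these in hand the argument reduces to an iteration of Suslin rigidity and the five-lemma, and there is no genuine obstacle.
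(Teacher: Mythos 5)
Your proposal is correct and follows essentially the same route as the paper: establish the case $p=0$ from the weight-$b\geq 0$ isomorphism $H^{a,b}(\C,A)\cong H^a_{\sing}(\pt,A)$, then propagate to all $p$ by the five lemma applied to the comparison of long exact sequences \eqref{eqn:les1}. The extra bookkeeping you flag (the identification of the $C_{2+}$-term with ordinary motivic cohomology and compatibility of $\Re$) is exactly what the paper relies on via \cite[Proposition 3.14]{HOV1}.
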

\begin{proof}
If $b\geq 0$, then $H^{a,b}(\C, A)\to H^{a}_{\sing}(\C, A)$ is an isomorphism for all $a$. In particular the result holds for $p=0$.  Using the comparison long exact sequence \eqref{eqn:les1} and the five lemma, the result holds for all $p$ by induction. 
\end{proof}

 \subsection{Vanishing of  Bredon motivic cohomology}

An important feature of motivic cohomology is its vanishing regions. If $X\in \Sm_k$  then 
$H^{a,b}(X,\Z/n) = 0$ in any of the following cases
\begin{enumerate}
	\item $a>2b$, 
	\item $a> b+\dim(X)$, or
	\item $b<0$.
\end{enumerate}

The vanishing regions for $H^{\star,\star}_{C_2}$ are more complicated. In this subsection  $k$ is a field with ${\rm char}(k)\neq 2$.

 \begin{proposition}\label{crt} 
	\;
	\begin{enumerate}
		\item $\rH^{*,*} _{C _2}(\EGt C _2,\ul{\Z/n})\cong \rH^{*,*}(\Sigma \BG C _2,\Z/n)$.
		\item $\rH^{\ast,b+q\sigma}_{C _2}(\EGt C _2,\ul{\Z/n})= 0$ if $b\leq 0$
	\end{enumerate}
	
\end{proposition}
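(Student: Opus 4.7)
The plan is to extract both statements from the long exact sequence in Bredon motivic cohomology associated to the isotropy separation cofiber sequence \eqref{eqn:cof2}. Applying $\rH^{\star,\star}_{C_2}(-, \ul{\Z/n})$ and restricting to integer bidegrees, the properties recalled in Section 2.2 identify $\rH^{a,b}_{C_2}(\EG C_{2+}, \ul{\Z/n}) \cong H^{a,b}(\BG C_2, \Z/n)$ (free action) and $\rH^{a,b}_{C_2}(S^0, \ul{\Z/n}) \cong H^{a,b}(\spec k, \Z/n)$ (trivial action), so the sequence reads
\begin{equation*}
\cdots \to H^{a-1,b}(\BG C_2, \Z/n) \to \rH^{a,b}_{C_2}(\EGt C_2, \ul{\Z/n}) \to H^{a,b}(\spec k, \Z/n) \xrightarrow{\pi^*} H^{a,b}(\BG C_2, \Z/n) \to \cdots,
\end{equation*}
where $\pi \colon \BG C_2 \to \spec k$ is the structure map.

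For (1), the key input is that the trivial $C_2$-torsor $C_2 \to \spec k$ furnishes a section $\sigma \colon \spec k \to \BG C_2$ of $\pi$, so $\sigma^* \pi^* = \id$ and $\pi^*$ is split injective. The displayed long exact sequence thus decomposes, yielding
\begin{equation*}
\rH^{a,b}_{C_2}(\EGt C_2, \ul{\Z/n}) \cong \mathrm{coker}\bigl(\pi^* \colon H^{a-1,b}(\spec k) \to H^{a-1,b}(\BG C_2)\bigr) = \rH^{a-1,b}(\BG C_2, \Z/n) \cong \rH^{a,b}(\Sigma \BG C_2, \Z/n).
\end{equation*}
Securing this splitness cleanly, uniformly in $k$ and $n$, is the one subtle point; the trivial-torsor section handles it without appealing to the full $H^{*,*}(\BG C_2, \Z/n)$-module structure over $H^{*,*}(\spec k, \Z/n)$.

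For (2), I would use the equivalences $\EGt C_2 \simeq S^\sigma \wedge \EGt C_2$ and $\EGt C_2 \simeq T^\sigma \wedge \EGt C_2$ from Section 2.1. Since $T^\sigma \simeq S^{2\sigma, \sigma}$, combining the two (shifting the weight by $\sigma$ via $T^\sigma$ and correcting the cohomological degree by two $S^\sigma$-shifts) produces an isomorphism $\rH^{a, b+q\sigma}_{C_2}(\EGt C_2, \ul{\Z/n}) \cong \rH^{a, b}_{C_2}(\EGt C_2, \ul{\Z/n})$ for every $q \in \Z$. The vanishing claim then reduces to integer bidegrees, where by (1) it is equivalent to vanishing of $\rH^{a-1,b}(\BG C_2, \Z/n)$. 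For $b < 0$ this is the standard vanishing of motivic cohomology in negative weights, and for $b = 0$ the group $H^{*,0}(\BG C_2, \Z/n) \cong H^*_{\mathrm{Nis}}(\BG C_2, \Z/n)$ is generated in degree zero by the unit from $H^{0,0}(\spec k, \Z/n)$, whence its reduced part is zero.
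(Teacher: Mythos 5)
Your proof is correct and follows essentially the same route as the paper: statement (1) is extracted from the long exact sequence of \eqref{eqn:cof2} using that the map $H^{*,*}_{C_2}(k,\ul{\Z/n})\to H^{*,*}_{C_2}(\EG C_2,\ul{\Z/n})$ is identified with the split monomorphism $f^*\colon H^{*,*}(k,\Z/n)\to H^{*,*}(\BG C_2,\Z/n)$ (the paper cites \cite[Proposition 3.16]{HOV1} for this identification and its splitness, where you instead invoke a rational point of $\BG C_2$, which is an equivalent justification). Statement (2) is then deduced, exactly as in the paper, from the $(\sigma,0)$- and $(0,\sigma)$-periodicity of $\rH^{\star,\star}_{C_2}(\EGt C_2,\ul{\Z/n})$ together with the vanishing of $\rH^{*,b}(\Sigma\BG C_2,\Z/n)$ for $b\leq 0$.
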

\begin{proof} 
	Since $\rH^{\ast,\ast}_{C _2}(\EGt C _2,\ul{\Z/n})$ is $(\sigma,0)$ and $(0,\sigma)$-periodic, (2) follows from (1).
	The first statement follows from the long exact sequence induced by \eqref{eqn:cof2}. Indeed, by 
	\cite[Proposition 3.16]{HOV1} the map  
	$H^{*,*}_{C_2}(k,\ul{\Z/n})\to H^{*,*}_{C_2}(\EG C_2, \ul{\Z/n})$ is isomorphic to the split monomorphism
	$f^*:H^{*,*}(k,\Z/n)\to H^{*,*}(\BG C_2,\Z/n)$, where $f:\BG C_2 \to \spec(k)$ is the projection. Thus $\rH^{*+1,*}_{C_2}(\EGt C_2,\ul{\Z/n})$ is isomorphic to the cokernel of $f^*$, which is $\rH^{*+1,*}(\Sigma \BG C_2,\Z/n)$.

\end{proof}

\begin{proposition} \label{van} 
	Let $X\in \Sm^{C_2}_k$ and suppose that $b+q<0$ and $b<0$.  Then
	\[
	H^{\star,b+q\sigma}_{C_2}(X,\ul{\Z/n})=0.
	\]
\end{proposition}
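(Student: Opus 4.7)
The plan is to apply the long exact sequence \eqref{eqn:les2} arising from smashing the motivic isotropy separation cofiber sequence \eqref{eqn:cof2} with $X_+$. In the bidegree $(a+p\sigma, b+q\sigma)$ this sandwiches $H^{a+p\sigma, b+q\sigma}_{C_2}(X, \ul{\Z/n})$ between $H^{a+p\sigma, b+q\sigma}_{C_2}(\EGt C_2\wedge X_+, \ul{\Z/n})$ and $H^{a+p\sigma, b+q\sigma}_{C_2}(\EG C_{2+}\wedge X_+, \ul{\Z/n})$; by exactness it is enough to show that both end-terms vanish. The argument then splits according to which hypothesis each end-term uses: the free piece consumes $b+q<0$, while the cofixed piece consumes $b<0$.

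For the free piece, I would first invoke the $(-2+2\sigma,-1+\sigma)$-periodicity of $H^{\star,\star}_{C_2}(\EG C_2, \ul{\Z/n})$, which is inherited by $H^{\star,\star}_{C_2}(\EG C_{2+}\wedge X_+, \ul{\Z/n})$, to reduce the weight to the integer $w := b + q < 0$. Smashing \eqref{eqn:cof1} with $\EG C_{2+}\wedge X_+$ gives an exact sequence whose outer terms are ordinary motivic cohomology groups $H^{*, w}(\EG C_2\times X, \Z/n)$ of the underlying non-equivariant scheme; since $\EG C_2 \cong \colim_n (\A^n\setminus 0)$ stabilizes cohomologically in any fixed bidegree, these are isomorphic to $H^{*, w}(X, \Z/n)$, which vanish for $w < 0$. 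This yields isomorphisms $H^{a+(p-1)\sigma, w}_{C_2}(\EG C_{2+}\wedge X_+)\cong H^{a+p\sigma, w}_{C_2}(\EG C_{2+}\wedge X_+)$ for every $p$. Specializing to $p = 0$ and using the free-action identification $H^{a, w}_{C_2}(\EG C_{2+}\wedge X_+, \ul{\Z/n})\cong H^{a, w}((\EG C_2\times X)/C_2, \Z/n)$ gives vanishing.

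For the cofixed piece, the equivalences $\EGt C_2\simeq T^\sigma\wedge \EGt C_2\simeq S^\sigma\wedge \EGt C_2$ recalled in Section~\ref{sub:equivariant} yield both $(2\sigma,\sigma)$- and $(\sigma,0)$-periodicities after smashing with $X_+$, collapsing the RO-grading to integer bidegree and reducing the problem to showing $H^{a, b}_{C_2}(\EGt C_2\wedge X_+, \ul{\Z/n}) = 0$ for $b < 0$. I would then use motivic purity for the closed immersion $X^{C_2}\hookrightarrow X$, smooth since $\mathrm{char}(k) \neq 2$, with free open complement $X\setminus X^{C_2}$. Smashing the purity cofiber sequence $(X\setminus X^{C_2})_+\to X_+\to \Th(N_{X^{C_2}/X})$ with $\EGt C_2$ kills the free part (using $\EG C_{2+}\wedge \EGt C_2\simeq *$) and absorbs the sign-representation weight of the Thom space (using $\EGt C_2\wedge T^\sigma\simeq \EGt C_2$), producing an equivalence $\EGt C_2\wedge X_+\simeq \EGt C_2\wedge X^{C_2}_+$. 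Since $X^{C_2}$ has trivial $C_2$-action, applying \eqref{eqn:les2} to $X^{C_2}_+$ in integer bidegree has the ordinary motivic cohomology groups $H^{*,b}(\BG C_2\times X^{C_2}, \Z/n)$ and $H^{*,b}(X^{C_2}, \Z/n)$ as its outer terms, both vanishing for $b < 0$, which closes this piece.

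The main technical obstacle I expect is the purity-style equivalence $\EGt C_2\wedge X_+\simeq \EGt C_2\wedge X^{C_2}_+$ in $\SH^{C_2}(k)$: while standard in equivariant topology, the motivic version requires combining Morel--Voevodsky purity for $X^{C_2}\hookrightarrow X$ with the sign-representation absorption by $\EGt C_2$, and uses that $X^{C_2}$ is smooth. Everything else is a routine organization of the two periodicities, the isotropy separation LES, and the classical vanishing of ordinary motivic cohomology in negative weight.
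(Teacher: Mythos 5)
Your proof is correct and takes essentially the same route as the paper's: isotropy separation via \eqref{eqn:cof2}, the $(-2+2\sigma,-1+\sigma)$- and $(\sigma,0)$, $(0,\sigma)$-periodicities to reduce to integer weights, negative-weight vanishing of ordinary motivic cohomology, and induction on the $\sigma$-degree using \eqref{eqn:cof1}. The only divergence is the concentration equivalence $\EGt C_2\wedge X_+\simeq \EGt C_2\wedge X^{C_2}_+$, which you propose to reprove via purity (where the "absorption" step glosses over the fact that the normal bundle need not be equivariantly trivial); the paper instead simply invokes \cite[Proposition 4.10]{GH} for exactly this point (see the proof of \aref{tra}), and you can replace your sketch by that citation.
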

\begin{proof} 
	Since $b<0$, we have  
	$\rH^{a+p\sigma,b+q\sigma}_{C_2}(\EGt C _2\wedge X_+)=0$.  
	Using the cofiber sequence \eqref{eqn:cof2}, we 
	  obtain 
	  \[
	H^{a+p\sigma,b+q\sigma}_{C_2}(X)\xto{\cong} 
	H^{a+p\sigma, b+q\sigma}_{C_2}(\EG C _2\times X).
	  	\]
	  	 Since $H^{a+p\sigma, b+q\sigma}_{C_2}(\EG C _2\times X)\cong H^{a+2q+(p-2q)\sigma, b+q}_{C_2}(\EG C _2\times X)$, it suffices to see that  
	  	 $H^{a+p\sigma, n}_{C_2}(\EG C _2\times X)=0$ for $n<0$. This follows from the case $p=0$, by induction using \eqref{eqn:cof1}.

\end{proof}

\begin{proposition} \label{tra}
	If $a\geq 2b+2$ then for any $X\in \Sm^{C_2}_k$,
	\begin{enumerate}
		\item $\rH^{a+p\sigma,b+q\sigma} _{C _2}(X_+\wedge \EGt C _2,\ul{\Z/n})=0$, and
		\item the projection $X\times \EG C_2\to X$ induces an isomorphism 
\[H^{a+p\sigma,b+q\sigma}_{C _2}(X,\ul{\Z/n})\xrightarrow{\cong} 
H^{a+p\sigma,b+q\sigma} _{C _2}(X\times \EG C _2,\ul{\Z/n}).\]		
	\end{enumerate}
\end{proposition}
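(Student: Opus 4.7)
The plan is to prove (1) first and deduce (2) from it. Granted (1), smashing the cofiber sequence \eqref{eqn:cof2} with $X_+$ gives a long exact sequence in which the two flanks of the projection-induced map in (2) are $\rH^{a+p\sigma,b+q\sigma}_{C_2}(X_+\wedge\EGt C_2,\ul{\Z/n})$ and $\rH^{(a+1)+p\sigma,b+q\sigma}_{C_2}(X_+\wedge\EGt C_2,\ul{\Z/n})$; both bidegrees satisfy $a\geq 2b+2$, so (1) makes them vanish and the middle map is an isomorphism.

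For (1) itself, the first move is to invoke the $(\sigma,0)$- and $(0,\sigma)$-periodicities of $\EGt C_2$ from \aref{sub:equivariant}, which give
\[
\rH^{a+p\sigma,b+q\sigma}_{C_2}(X_+\wedge\EGt C_2,\ul{\Z/n})\cong \rH^{a,b}_{C_2}(X_+\wedge\EGt C_2,\ul{\Z/n}),
\]
so it suffices to prove the vanishing in integer bidegrees $(a,b)$ with $a\geq 2b+2$.

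Next we peel off the free part of $X$ using equivariant purity: the closed immersion $X^{C_2}\hookrightarrow X$ with open complement $U=X\setminus X^{C_2}$ yields a cofiber sequence $U_+\to X_+\to \Th(N)$, where $N$ is the normal bundle of $X^{C_2}$ in $X$. Since $U$ has free $C_2$-action, the Borel fibration $(U\times\EG C_2)/C_2\to U/C_2$ is a motivic weak equivalence (its fiber $\EG C_2$ is unequivariantly contractible), and the long exact sequence from \eqref{eqn:cof2} collapses to force $\rH^{*,*}_{C_2}(U_+\wedge\EGt C_2,\ul{\Z/n})=0$ in every bidegree. Hence $\rH^{a,b}_{C_2}(X_+\wedge\EGt C_2,\ul{\Z/n})\cong\rH^{a,b}_{C_2}(\Th(N)\wedge\EGt C_2,\ul{\Z/n})$. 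Splitting $N=N^+\oplus N^-$ into trivial and sign sub-bundles of ranks $r_{\pm}$ (possible because $2$ is invertible in $k$ and $X^{C_2}$ has trivial action), the equivariant Thom isomorphism combined with one more use of the periodicity yields
\[
\rH^{a,b}_{C_2}(\Th(N)\wedge\EGt C_2,\ul{\Z/n})\cong \rH^{a-2r_+,\,b-r_+}_{C_2}(Z_+\wedge\EGt C_2,\ul{\Z/n}),
\]
where $Z=X^{C_2}$. The condition $a\geq 2b+2$ is preserved under this shift, since $(a-2r_+)-2(b-r_+)=a-2b$.

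This reduces us to the trivial-action case $Z=X^{C_2}$. Here the bullets in \aref{sub:equivariant} identify the flanks of the long exact sequence from \eqref{eqn:cof2} with ordinary motivic cohomology, namely $H^{a,b}(Z,\Z/n)$ and $H^{a-1,b}(Z\times\BG C_2,\Z/n)$. Both vanish in the range $a\geq 2b+2$ by the standard motivic vanishing $H^{a,b}(Y,\Z/n)=0$ for $a>2b$ and $Y$ smooth, extended to the ind-scheme $Z\times\BG C_2=\colim_m(Z\times B_m)$ via the Milnor $\lim^1$-sequence (which degenerates because motivic cohomology with finite coefficients of finite-type smooth $\C$-schemes is finite in each bidegree). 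The main obstacle is the Thom-isomorphism step: setting it up rigorously for the equivariant spectrum $M\ul{\Z/n}$ and arbitrary $C_2$-vector bundles, and keeping the mixed integer/representation bidegree shifts straight, is where most of the technical care goes; once in place, the conclusion reduces to classical motivic vanishing.
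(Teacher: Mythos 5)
Your reduction of (2) to (1) and your use of the $(\sigma,0)$- and $(0,\sigma)$-periodicities of $\EGt C_2$ to pass to integer bidegrees match the paper. After that your route diverges: the paper reduces to the trivial-action case in one step, citing the concentration equivalence $X^{C_2}_+\wedge\EGt C_2\simeq X_+\wedge\EGt C_2$ of \cite[Proposition 4.10]{GH}, and then runs the long exact sequence from \eqref{eqn:cof2} against the vanishing $H^{a,b}=0$ for $a>2b$ of $X^{C_2}$ and of the Borel construction. You instead try to reprove this reduction by hand, via the purity cofiber sequence $U_+\to X_+\to \Th(N)$ and an equivariant Thom isomorphism for $N$.

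This is where the genuine gap lies. The normal bundle $N$ of $X^{C_2}$ in $X$ is purely anti-invariant (its invariant part is absorbed into $T_{X^{C_2}}$, so your $r_+=0$), hence the Thom isomorphism you invoke is one for a $C_2$-bundle with sign action on the fibers, with Thom class in degree a multiple of $(2\sigma,\sigma)$. No such Thom isomorphism for $M\ul{\Z/n}$ is established in this paper or in the references it uses, and constructing one (even only after smashing with $\EGt C_2$, where the $\sigma$-shifts would be cancelled by periodicity) amounts to an Euler-class/localization argument that is essentially the content of the concentration theorem you are trying to avoid citing; you acknowledge this step needs ``most of the technical care'' but do not supply it, so the argument is incomplete at exactly its load-bearing point. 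A secondary, more repairable issue is the free locus: the claim that $(U\times\EG C_2)/C_2\to U/C_2$ is a motivic weak equivalence because its fiber is contractible is not a valid general principle; one needs either a Morel--Voevodsky style argument for colimits of complements of zero sections in growing vector bundles, or simply the same concentration result ($U^{C_2}=\emptyset$ gives $U_+\wedge\EGt C_2\simeq *$). The shortest repair of your write-up is to replace the purity-plus-Thom step by the citation of \cite[Proposition 4.10]{GH}, after which your endgame (vanishing of $H^{a,b}$ of $X^{C_2}$ and of $X^{C_2}\times\BG C_2$ for $a\geq 2b+2$, fed into the sequence from \eqref{eqn:cof2}) coincides with the paper's proof.
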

\begin{proof} 
	The two statements are equivalent using \eqref{eqn:cof2}. Therefore, we will establish the first one. 
Since	
$\rH^{a+p\sigma,b+q\sigma} _{C _2}(X_+\wedge \EGt C _2,\ul{\Z/n}) 
\cong \rH^{a,b} _{C _2}(X_+\wedge \EGt C _2,\ul{\Z/n})$, we can assume that $p=q=0$.  We can assume that $X$ has trivial action, since 
$\rH^{a,b} _{C _2}(X^{C_2}_+\wedge \EGt C _2,\ul{\Z/n})\cong 
\rH^{a,b} _{C _2}(X_+\wedge \EGt C _2,\ul{\Z/n})$ by \cite[Proposition 4.10]{GH}.

Consider the exact sequence from \eqref{eqn:cof2}.
	\[
	\cdots \to H^{a-1,b} _{C _2}(X\times \EG C _2,\ul{\Z/n}) \rightarrow 
	\rH^{a,b} _{C _2}(X_+\wedge \EGt C _2,\ul{\Z/n})\rightarrow
	 H^{a,b} _{C _2}(X,\ul{\Z/n})\rightarrow 
	 H^{a,b} _{C _2}(X\times \EG C _2, \ul{\Z/n})\to\cdots .
	 \]
	Now $H^{a,b} _{C _2}(X\times \EG C_2,\ul{\Z/n})\cong H^{a,b}(X\times_{C_2}\EG C_2,\ul{\Z/n})$, 
	 and if $a>2b$ this last group is zero and so the proposition follows. 
	
\end{proof}
\begin{proposition} \label{1van} 
	For any $X\in \Sm^{C_2}_k$, if 
	$a\geq 2b+2$ and $p\geq 2q$ then we have that  
	\[
	H^{a+p\sigma,b+q\sigma}_{C_2}(X,\ul{\Z/n})=0.
	\] 	
\end{proposition}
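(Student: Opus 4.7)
The plan is to reduce the problem, via \aref{tra} together with the $(-2+2\sigma,-1+\sigma)$-periodicity of $H^{\star,\star}_{C_2}(\EG C_2,\ul{\Z/n})$, to a vanishing statement in integer weight on $X\times \EG C_2$, and then to induct on $p$ using the cofiber sequence \eqref{eqn:cof1}.

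First, part (2) of \aref{tra} uses the hypothesis $a\ge 2b+2$ to produce an isomorphism $H^{a+p\sigma,b+q\sigma}_{C_2}(X,\ul{\Z/n})\cong H^{a+p\sigma,b+q\sigma}_{C_2}(X\times \EG C_2,\ul{\Z/n})$, so I only need to show vanishing of the right-hand side. Since $H^{\star,\star}_{C_2}(X\times \EG C_2,\ul{\Z/n})$ is a module over $H^{\star,\star}_{C_2}(\EG C_2,\ul{\Z/n})$ via the projection, and the latter ring is $(-2+2\sigma,-1+\sigma)$-periodic, multiplication by the $(-q)$-th power of the periodicity element yields an isomorphism
\[
H^{a+p\sigma,b+q\sigma}_{C_2}(X\times \EG C_2,\ul{\Z/n}) \cong H^{(a+2q)+(p-2q)\sigma,\, b+q}_{C_2}(X\times \EG C_2,\ul{\Z/n}).
\]
The translated hypotheses read $a+2q \ge 2(b+q)+2$ and $p-2q \ge 0$, so I may assume from the outset that $q=0$ and $p \ge 0$.

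Next I induct on $p$. For the base case $p=0$, the free $C_2$-action on $X\times \EG C_2$ identifies $H^{a,b}_{C_2}(X\times \EG C_2,\ul{\Z/n})$ with the ordinary motivic cohomology $H^{a,b}((X\times \EG C_2)/C_2,\Z/n)$, which vanishes since $a > 2b$. For the inductive step with $p \ge 1$, smashing \eqref{eqn:cof1} with $(X\times \EG C_2)_+$ gives a long exact sequence in which $H^{a+p\sigma,b}_{C_2}(X\times \EG C_2,\ul{\Z/n})$ is sandwiched between $H^{a+(p-1)\sigma,b}_{C_2}(X\times \EG C_2,\ul{\Z/n})$ and the underlying non-equivariant motivic cohomology $H^{a+p,b}(X\times \EG C_2,\Z/n)$. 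The first vanishes by the inductive hypothesis. Since $\EG C_2 \simeq \A^{\infty}\setminus 0$ is non-equivariantly $\A^1$-contractible, the second is isomorphic to $H^{a+p,b}(X,\Z/n)$, which vanishes because $a+p \ge a \ge 2b+2 > 2b$.

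The main subtlety to check is that the periodicity element of $H^{\star,\star}_{C_2}(\EG C_2,\ul{\Z/n})$ acts invertibly on $H^{\star,\star}_{C_2}(X\times \EG C_2,\ul{\Z/n})$ after pulling back along the projection; once this is in hand, the remaining steps follow exactly the pattern used in the proofs of \aref{van} and \aref{tra}.
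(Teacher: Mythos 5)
Your argument is correct and follows the paper's proof essentially verbatim: reduce to $X\times \EG C_2$ via \aref{tra}, use the $(-2+2\sigma,-1+\sigma)$-periodicity to pass to integer weight $b+q$, and then induct on the number of $\sigma$'s using \eqref{eqn:cof1}, with the base case handled by the free-quotient identification and the vanishing $H^{a,b}=0$ for $a>2b$. The invertibility of the periodicity class on $H^{\star,\star}_{C_2}(X\times \EG C_2,\ul{\Z/n})$ that you flag as the remaining subtlety is exactly what the paper's citation \cite[Theorem 5.4]{HOV1} supplies, stated there for $X\times \EG C_2$ and not merely for $\EG C_2$ itself.
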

\begin{proof} 
  By \aref{tra}, it suffices to show that 
  $H^{a+p\sigma,b+q\sigma}_{C_2}(X\times \EG C_2, \ul{\Z/n})=0$.
  By \cite[Theorem 5.4]{HOV1}, 
  \[
  H^{a+p\sigma,b+q\sigma}_{C_2}(X\times \EG C_2)
  \cong H^{a+2q+(p-2q)\sigma,b+q} _{C _2}(X\times \EG C _2).
  \]
  If $p=2q$,  then $H^{a+2q+(p-2q)\sigma,b+q} _{C _2}(X\times \EG C _2) \cong H^{a+2q,b+q} (X\times_{C_2} \EG C _2)$. This group vanishes when $a>2b$. To conclude the proposition, we use 
  the long exact sequence obtained from \eqref{eqn:cof1} and induction on $p\geq 2q$.

\end{proof}

The following example shows that the general vanishing range in the previous proposition can't be improved.

\begin{example} 
	For any $p$, we have 
	\[
	H^{2+p\sigma, 1-2\sigma}_{C_2}(\P^1, \ul{\Z/n})\neq 0.
	\]
	To see this, we first note that 
	$H^{\star,1-2\sigma}_{C_2}(\P^1\times \EG C _2, \ul{\Z/n})=0$ (see the proof of \aref{neq}) and therefore from \eqref{eqn:cof2} we see that
	$\rH^{2+p\sigma,1-2\sigma}_{C_2}(\P^1_+\wedge \EGt C _2,\ul{\Z/n})\cong H^{2+p\sigma,1-2\sigma}_{C_2}(\P^1, \ul{\Z/n})$. Since $\P^1_+\simeq T\vee S^0$, we thus have
	\[
	\rH^{2+p\sigma,1-2\sigma}_{C_2}(\P^1 _+\wedge \EGt C _2, \ul{\Z/n})\cong  \rH^{1,1}_{C_2}(\EGt C _2,\ul{\Z/n})\oplus \rH^{2,1}_{C_2}(\EGt C _2, \ul{\Z/n}).
	\]
	This group is nonzero since $\rH^{*,*}_{C_2}(\EGt C_2,\ul{\Z/n}) \cong \rH^{*,*}(\Sigma\BG C_2,\Z/n)$.
\end{example}

\section{Bredon motivic cohomology of \texorpdfstring{$\EG C_2$}{EC2}}\label{5}
In this section, we compute the Bredon motivic cohomology ring of $\EG C_2$. 
Betti realization plays a key role in our determination of this ring and we start by leveraging  motivic cohomology of $\BG C_2$. 
From \cite[Proposition 3.16]{HOV1}, we have an isomorphism
   $H^{a,b} _{C _2}(\EG C _2,A)\cong H^{a,b}(\BG C _2,A)$ and this isomorphism fits into the commutative diagram.
   \[
   \begin{tikzcd}
   H^{a,b} _{C _2}(\EG C _2,\ul{A})\ar[d,"\Re"']\ar[r,"\cong"] & H^{a,b}(\BG C _2,A)\ar[d, "\Re"] \\
   H^{a} _{\Br}(\E C _2,\ul{A})\ar[r, "\cong"] & H^{a}_{\sing}({\rm B} C _2,A).
   \end{tikzcd}
   \]
   
\begin{lemma}\label{lema:leqb}
	Let $A$ be a finite abelian group. Betti realization
	\[
	\Re:H^{a,b}(\BG C_2, A)\to H^{a}_{\sing}({\rm B} C_2, A)
	\]
	is an isomorphism if $a\leq 2b$. 
\end{lemma}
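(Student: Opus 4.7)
The plan is to reduce to the case $A = \Z/2$ and then invoke Voevodsky's explicit computation \eqref{eqn:voebg} for a direct comparison with the classical calculation $H^*_{\sing}({\rm B} C_2, \Z/2) \cong \Z/2[x]$, where $|x| = 1$.

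First, I would decompose $A$ into its $\ell$-primary parts and use that cohomology commutes with finite direct sums in the coefficients, reducing to $A = \Z/\ell^k$. For odd $\ell$, a transfer argument applied to the degree-two \'etale covers $\A(n\sigma) \setminus 0 \to (\A(n\sigma) \setminus 0)/C_2$ approximating $\BG C_2$ will identify $H^{*,*}(\BG C_2, \Z/\ell^k)$ with $H^{*,*}(\C, \Z/\ell^k)$ in the stable range, since $2$ is invertible mod $\ell^k$; the topological analog $H^*_{\sing}({\rm B} C_2, \Z/\ell^k) \cong H^*_{\sing}(\pt, \Z/\ell^k)$ is classical, so the claim will reduce to \aref{prop:ptiso}.

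For the base case $\ell = 2$, $k = 1$, Voevodsky's computation \eqref{eqn:voebg} shows that $H^{a,b}(\BG C_2, \Z/2)$ has a single $\Z/2$ summand in each bidegree $(a, b)$ with $0 \leq a \leq 2b$, spanned by the monomial $e_1^\epsilon e_2^i \tau^j$ with $\epsilon + 2i = a$ and $\epsilon + i + j = b$, and vanishes outside this range. Under Betti realization, $\tau \mapsto 1$ and $e_1 \mapsto x$, and the relation $e_1^2 = e_2\tau$ forces $e_2 \mapsto x^2$; hence the above monomial maps to $x^a$, which generates $H^a_{\sing}({\rm B} C_2, \Z/2) \cong \Z/2$. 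This will give the isomorphism in the base case.

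For $\ell = 2$ and $k \geq 2$, I would induct on $k$ via the short exact sequence $0 \to \Z/2 \to \Z/2^k \to \Z/2^{k-1} \to 0$ and the five lemma applied to the resulting long exact sequences on both sides, invoking naturality of Betti realization. The main subtlety will arise at the boundary $a = 2b$, where the five lemma ordinarily requires a comparison at $(2b+1, b)$ outside the stated range; this is to be resolved by the vanishing $H^{2b+1, b}(\BG C_2, \Z/2) = 0$ from Voevodsky's computation, which propagates inductively through the SES to yield $H^{2b+1, b}(\BG C_2, \Z/2^k) = 0$ for all $k$. The rightmost vertical map in the five-lemma diagram is then $0 \to H^{2b+1}_{\sing}({\rm B} C_2, \Z/2)$, trivially a monomorphism, and the five lemma will yield the desired isomorphism at $(2b, b)$.
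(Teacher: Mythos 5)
Your proposal is correct, but it takes a genuinely different route from the paper's proof for general finite coefficients. The paper also reads off the $\Z/2$ case from Voevodsky's computation \eqref{eqn:voebg} (asserting, as you do, that $e_1$ realizes to the generator), but then treats an arbitrary finite $A$ in one stroke: it uses the cofiber sequence $\BG C_{2+}\to \P^{\infty}_+\to \mathrm{Th}(\mcal{O}(-2))$, the Thom isomorphism, the five lemma, and the fact that $\Re:H^{a,b}(\P^{\infty},A)\to H^{a}_{\sing}(\C P^{\infty},A)$ is an isomorphism for $a\leq 2b$, so no primary decomposition or case distinction on the prime is needed. You instead split $A$ into cyclic $\ell$-primary pieces and argue by different means in the two cases: Bockstein induction from the explicit $\Z/2$ answer when $\ell=2$, and an \'etale-transfer argument reducing both sides to the point when $\ell$ is odd. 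Your treatment of the boundary $a=2b$ in the five lemma, via the vanishing $H^{2b+1,b}(\BG C_2,\Z/2)=0$ coming from \eqref{eqn:voebg}, is exactly the point where care is required, and it is handled correctly; note the analogous care is hidden in the paper's five-lemma step as well. What your route buys is independence from the geometric input (the $\P^{\infty}$ comparison and the Thom isomorphism), using only Voevodsky's $\Z/2$ computation plus formal tools (coefficient long exact sequences, transfers); what it costs is the case split and the reliance on the transfer formalism for odd torsion, where to be complete you should make explicit that the identifications $H^{*,*}(\BG C_2,\Z/\ell^k)\cong H^{*,*}(\C,\Z/\ell^k)$ and $H^{*}_{\sing}({\rm B}C_2,\Z/\ell^k)\cong H^{*}_{\sing}(\pt,\Z/\ell^k)$ are induced by the structure maps, so that naturality of $\Re$ reduces the claim to the point, and that passing to the colimit of the finite-dimensional models of $\BG C_2$ is harmless because the finite-stage groups stabilize in each bidegree (so $\lim^1$ vanishes). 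These are expository refinements rather than gaps.
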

\begin{proof}
If $A=\Z/2$, this can be read off of Voevodsky's computation \eqref{eqn:voebg}, since Betti realization of $e_1$ is the generator of $H^*_{\sing}({\rm B} C _2,\Z/2)$. In general, we use
that $\BG C_{2+}$ sits in the cofiber sequence, see \cite[Section 6]{Voc},
\[
\BG C_{2+}\to \P^{\infty}_+ \to {\rm Th}(\mcal{O}(-2)).
\]
The lemma follows  from the comparison of long exact sequences induced by this cofiber sequence, the five lemma, the Thom isomorphism, and that $\Re:H^{a,b}(\P^{\infty};A)\to H^{a}_{\sing}(\C P^{\infty}, A)$ is an isomorphism if $a\leq 2b$.
\end{proof}

\begin{proposition}\label{Bocal}
Suppose $b+q\geq 0$. Then
\[
H^{a+p\sigma,b+q\sigma}_{C_2}(\EG C _2, \ul{\Z/n})
\cong 
\begin{cases}
	H^{a+p\sigma}_{\Br}(\Es C_2,\ul{\Z/n}) & a\leq 2b \\
	0 & a = 2b+1 \\
	H^{a+2q+(p-2q)\sigma}_{\Br}(\pt, \ul{\Z/n}) & a\geq 2b+2.
\end{cases}
\]
Furthermore, Betti realization is an isomorphism if $a\leq 2b$. It is   multiplication by  $2$ if $a\geq 2b+2$, $p=-a$, and $a$ is even. All other Betti realizations are zero.

\end{proposition}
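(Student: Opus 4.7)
The plan combines periodicity, Voevodsky's motivic cohomology of $\BG C_2$, and the long exact sequence for $C_{2+}\to S^0\to S^\sigma$, together with the isotropy separation sequence.

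First, the $(-2+2\sigma,-1+\sigma)$-periodicity of $H^{\star,\star}_{C_2}(\EG C_2)$ (multiplication by a periodicity class $\xi^{-q}$) reduces us to $q=0$ with $B=b+q\geq 0$. Setting $A=a+2q$ and $P=p-2q$, the three cases become $A\leq 2B$, $A=2B+1$, $A\geq 2B+2$, and the claimed targets become $H^{A+P\sigma}_{\Br}(\Es C_2)$ and $H^{A+P\sigma}_{\Br}(\pt)$. At $P=0$, the identification $H^{A,B}_{C_2}(\EG C_2)\cong H^{A,B}(\BG C_2)$ together with a direct monomial count in Voevodsky's presentation \eqref{eqn:voebg} confirms the claim, and \aref{lema:leqb} further identifies the result with $H^A_{\Br}(\Es C_2)$ via Betti realization when $A\leq 2B$.

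To extend to general $P$, I apply the LES \eqref{eqn:les1}. Since $\EG C_2$ is $\A^1$-contractible once the $C_2$-action is forgotten, the non-equivariant term equals $H^{A+P,B}(\spec\C,\Z/n)$, which is $\Z/n$ precisely when $A+P=0$ and $B\geq 0$, and zero otherwise. Hence $\cdot a$ is an isomorphism whenever $A+P\notin\{0,1\}$, making the cohomology constant on the two stable ranges $A+P\geq 2$ and $A+P\leq -1$; the topological LES has identical structure. In case 1, the iso at $P=0$ from \aref{lema:leqb} extends to every $P$ by the five-lemma applied to the comparison of the equivariant and topological LES. In case 2, the vanishing at $P=0$ propagates analogously. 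In case 3, \aref{1van} directly gives vanishing for $P\geq 0$, and $\cdot a$-iso extends this to all $A+P\geq 2$.

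For $A+P\leq 0$ in case 3, the nonzero $\Z/n$ values come from the negative cone of $H^{\star}_{\Br}(\pt,\ul{\Z/n})$: at $B=0$, the split injection $H^{\star,0}_{C_2}(\spec\C)\hookrightarrow H^{\star,0}_{C_2}(\EG C_2)$ provided by \aref{crt} (which uses $\rH^{\star,0}_{C_2}(\EGt C_2)=0$) lifts these classes to nonzero elements, and the LES upper bound pins down the cohomology; the case $B\geq 1$ is handled via the isotropy separation LES \eqref{eqn:les2} together with the identification $\rH^{\star,\star}_{C_2}(\EGt C_2)\cong\rH^{\star,\star}(\Sigma\BG C_2)$ of \aref{crt}. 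The Betti realization assertion in case 3 (``multiplication by $2$'', hence zero in $\Z/2$) follows from \eqref{eqn:wtEtopt}: the negative cone lies in the kernel of the topological restriction $H^{\star}_{\Br}(\pt)\to H^{\star}_{\Br}(\Es C_2)$. I expect case 3 to be the main obstacle, since Betti realization vanishes on the negative cone and cannot be used via the five-lemma; instead one must exhibit the nontrivial contributions by lifting from the point's cohomology via \aref{crt}, and the $B\geq 1$ cases require careful use of the isotropy separation LES.
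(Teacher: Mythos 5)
Your overall strategy (reduce to $q=0$ by periodicity, use Voevodsky's computation and \aref{lema:leqb} at $p=0$, induct on $p$ via the comparison \eqref{eqn:les1}, and handle $a\geq 2b+2$ through the isotropy sequence, \aref{crt} and the weight-$b$ point comparison) is the same as the paper's, and your cases $a\le 2b$ and $a\ge 2b+2$ go through essentially as in the paper. But there is a genuine gap in case 2, $a=2b+1$: the vanishing at $p=0$ does \emph{not} ``propagate analogously''. In the long exact sequence coming from \eqref{eqn:cof1}, the group $H^{2b+1+p\sigma,b}_{C_2}(\EG C_2,\ul{\Z/n})$ sits between $H^{2b+1+p,b}(\C,\Z/n)$ and $H^{2b+1+(p+1)\sigma,b}_{C_2}(\EG C_2,\ul{\Z/n})$, so downward induction breaks precisely at $p=-(2b+1)$, where the flanking non-equivariant group is $H^{0,b}(\C,\Z/n)\cong\Z/n\neq 0$. (Your own observation that the multiplication-by-$a$ maps are isomorphisms only away from a short range of total degrees flags exactly this spot; note also the indexing: the exceptional total degrees are $a+p\in\{0,-1\}$, not $\{0,1\}$.) At $p=-(2b+1)$ one must prove that $H^{2b-2b\sigma,b}_{C_2}(\EG C_2)\to H^{0,b}(\C,\Z/n)$ is surjective, and this is the substantive step of the paper's proof: it compares with topology (using the already-established case $a\le 2b$, where Betti realization is an isomorphism) and identifies $H^{2b-2b\sigma}_{\Br}(\Es C_2,\ul{\Z/n})\to H^{0}_{\sing}(\pt,\Z/n)$ with the restriction-to-the-fibre map $H^{2b}_{\sing}(\Th(\gamma),\Z/n)\to H^{2b}_{\sing}(S^{2b},\Z/n)$ for the $b$-dimensional complex sign bundle $\gamma$ over $\Bs C_2$, which is an isomorphism because $\gamma$ is orientable. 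Nothing in your outline supplies this input, and without it the case $a=2b+1$ (and hence the downward continuation to $p<-(2b+1)$) is unproved.

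Two smaller points. In case 3 with $b\geq 1$ your appeal to \eqref{eqn:les2} and \aref{crt} only yields $H^{a+p\sigma,b}_{C_2}(\EG C_2)\cong H^{a+p\sigma,b}_{C_2}(\C)$ for $a\ge 2b+2$; to identify the latter with $H^{a+p\sigma}_{\Br}(\pt,\ul{\Z/n})$ you must invoke \aref{prop:ptiso} explicitly (this is what keeps the argument independent of \aref{thm:additive}), and as written it is unclear what pins the group down. Finally, the proposition is stated for $\ul{\Z/n}$-coefficients: your justification of the realization claim in case 3 only treats $\Z/2$ (``hence zero''), whereas the assertion that the map is multiplication by $2$ when $p=-a$ and $a$ is even requires identifying $H^{a+p\sigma}_{\Br}(\pt)\to H^{a+p\sigma}_{\Br}(\Es C_2)$ with the norm map $H\Z_{hC_2}\to H\Z^{hC_2}$, as in the paper; \eqref{eqn:wtEtopt} alone does not give this.
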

\begin{proof} 
		Since $H^{\star, \star}_{C_2}(\EG C_2, \ul{\Z/n})$ is $(-2+2\sigma, -1+\sigma)$ periodic by \cite[Theorem 5.4]{HOV1} and the statement of the proposition is compatible with this periodicity,
		it suffices to treat the case $q=0$. We now assume that $q=0$.
		
		When  $p=0$, then $H^{a,b}_{C_2}(\EG C_2, \ul{\Z/n})\cong H^{a,b}(\BG C_2, \Z/n)=0$ if $a>2b$ and 
		by \aref{lema:leqb}, Betti realization is an isomorphism $ a\leq 2b$.

We suppress the coefficient group  for typographical simplicity and proceed by induction on $p$. 
To begin with, we use the comparison of exact sequences \eqref{eqn:les1}
	\[
	\begin{tikzcd}[column sep=small]
	\cdots \ar[r] & H^{i+p, b}_{}(\C) \ar[r]\ar[d] & H^{i+p\sigma, b }_{C_2}(\EG C_2) \ar[r]\ar[d] & H^{i+(p+1)\sigma, b }_{C_2}(\EG C_2) \ar[r]\ar[d] & H^{i+1+p, b}_{}(\C) \ar[r]\ar[d]& \cdots 
	\\
	\cdots \ar[r] &H^{i+p}_{\sing}(\pt) \ar[r]& H^{i+p\sigma}_{\Br}(\Es C_2) \ar[r] & H^{i+(p+1)\sigma}_{\Br}(\Es C_2) \ar[r] & H^{i+1+p}_{\sing}(\pt) \ar[r]& \cdots .
	\end{tikzcd}
	\]
	A straightforward induction shows that for $p\geq 0$, Betti realization 
	$H^{i+p\sigma, b}_{C_2}(\EG C_2)\to H^{i+p\sigma}_{\Br}(\Es C_2)$ 
	is an isomorphism if $i\leq 2b$ and 
	$H^{i+p\sigma, b}_{C_2}(\EG C_2)=0$ if $i>2b$. This  establishes the result in case $p\geq 0$. 
	
	Now we establish the result for $p\leq 0$.
	  Using the comparison of exact sequences \eqref{eqn:les1} and the five lemma, we find that if the map
	   $H^{i+n\sigma, b}_{C_2}(\EG C_2)\to H^{i+n\sigma}_{\Br}(\Es C_2)$ is an isomorphism for all $i\leq 2b$ when $n=p+1$, then this map is also an isomorphism for all $i\leq 2b$ when $n=p$. By downward induction on $p$, starting with $p=0$, we deduce the computation for $a\leq 2b$. 
	
	Now assume that $H^{2b+1+n\sigma, b}(\EG C_2)=0$ for $n=p+1$. If $p\geq -(2b+1)$, if follows from the exact sequence induced by \eqref{eqn:cof1} that this group vanishes for $n=p$ as well. Thus downward induction  implies the result for $p<-(2b+1)$ once we treat the case $p=-(2b+1)$. Consider the comparison of exact sequences
	\[
	\begin{tikzcd}[column sep=small]
	H^{2b-2b\sigma}_{C_2}(\EG C_2) \ar[r,"\phi"]\ar[d, "\cong"'] & H^{0, b}_{}(\C) \ar[r]\ar[d, "\cong"] & H^{2b+1-(2b+1)\sigma, b }_{C_2}(\EG C_2) \ar[r]\ar[d] & H^{2b+1 -2b\sigma, b }_{C_2}(\EG C_2) = 0 \ar[d]   
	\\
{H^{2b-2b\sigma}_{\Br}(\Es C_2)} \ar[r, "\cong"] & {H^{0}_{\sing}(\pt)} \ar[r]& {H^{2b+1-(2b+1)\sigma}_{\Br}(\Es C_2)} \ar[r] & {H^{2b+1 -2b\sigma}_{\Br}(\Es C_2)}. 
	\end{tikzcd}
	\]
	That the bottom left horizontal arrow is an isomorphism can be seen by noting that this map can be identified with the restriction to the fiber homomorphism $H^{2b}_{\sing}({\rm Th}(\gamma), \Z/n)\to H^{2b}_{\sing}(S^{2b},\Z/n)$, where $\gamma$ is the vector bundle on 
$\Bs C_2$ determined by the $b$-dimensional complex sign representation. This map is an isomorphism because  $\gamma$ is orientable.
	It follows that the map labeled $\phi$ is an isomorphism and so $H^{2b+1-(2b+1)\sigma, b}_{C_2}(\EG C_2) =0$.

	If $a\geq 2b+2$, then $H^{a+p\sigma, b}_{C_2}(\C)\cong H^{a+p\sigma, b}_{C_2}(\EG C_2)$, since $H^{a+p\sigma, b}(\wt\EG C_2)\cong 
	\rH^{a, b}(\Sigma \BG C_2)=0$ for $a\geq 2b+2$. The case $a\geq 2b+2$ thus follows from \aref{prop:ptiso}.

For the last statement about Betti realization, we have already checked that it is an isomorphism if $a\leq 2b$. The remaining part of the statement follows from the commutative diagram, where $2b+2\leq a$ 
	\[
	\begin{tikzcd}
		H^{a+p\sigma, b}_{C_2}(\C) \ar[r, "\cong"]\ar[d, "\cong"'] & H^{a+p\sigma, b }_{C_2}(\EG C_2) \ar[d] \\
		{H^{a+p\sigma}_{\Br}(\pt)} \ar[r, "\cd 2"]& 
		{H^{a+p\sigma}_{\Br}(\Es C_2)}.
	\end{tikzcd}
	\]
To see that the bottom arrow is multiplication by $2$, note that for $2\leq a$,
$H^{a+p\sigma}_{\Br}(\pt, \ul{\Z})\cong H_{-a-p\sigma}^{\Br}(\EG C_2,\ul{\Z})$, see e.g.,~\cite{Greenlees:4} for details, and under this identification the lower arrow is induced by the norm map $H\Z_{hC_2}\to H\Z^{hC_2}$.

 \end{proof}

\begin{proposition} \label{neq} If $b+q<0$ then 
	$H^{\star,
		b+q\sigma} _{C _2}(\EG C _2, \ul{\Z/n}) = 0$.
\end{proposition}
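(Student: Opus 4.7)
The plan is to exploit the $(-2+2\sigma,-1+\sigma)$-periodicity of $H^{\star,\star}_{C_2}(\EG C_2,\ul{\Z/n})$ from \cite[Theorem 5.4]{HOV1} to reduce the assertion to the case of integer weight, and then to propagate the vanishing across all $\sigma$-exponents via the fundamental cofiber sequence \eqref{eqn:cof1}. Iterating the periodicity $|q|$ times yields an isomorphism
\[
H^{a+p\sigma,b+q\sigma}_{C_2}(\EG C_2,\ul{\Z/n}) \cong H^{(a+2q)+(p-2q)\sigma,\,b+q}_{C_2}(\EG C_2,\ul{\Z/n}),
\]
so, writing $b' := b+q$, it suffices to establish the vanishing of $H^{a+p\sigma,b'}_{C_2}(\EG C_2,\ul{\Z/n})$ whenever $b' < 0$.

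For the base case $p=0$, I would invoke the identification $H^{a,b'}_{C_2}(\EG C_2,\ul{\Z/n}) \cong H^{a,b'}(\BG C_2,\Z/n)$ from \cite[Proposition 3.16]{HOV1}, together with the standard fact that motivic cohomology of smooth schemes vanishes in negative weights. For general $p$, smashing \eqref{eqn:cof1} with $\EG C_2$ produces a long exact sequence in which the terms $H^{a+p\sigma,b'}_{C_2}(C_{2+}\wedge \EG C_2) \cong H^{a+p,b'}(\EG C_2)$ are nonequivariant motivic cohomology of $\EG C_2$. Since $\EG C_2 \simeq \colim_n (\A(n\sigma)\setminus 0)$ is a filtered colimit of smooth schemes, these groups vanish in every negative weight. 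Consequently the map induced by $S^0 \to S^\sigma$ is an isomorphism
\[
H^{a+(p-1)\sigma,b'}_{C_2}(\EG C_2,\ul{\Z/n}) \xrightarrow{\cong} H^{a+p\sigma,b'}_{C_2}(\EG C_2,\ul{\Z/n})
\]
for every $p$, and induction on $|p|$ starting from the base case $p=0$ concludes the argument.

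This strategy parallels \aref{van}, which treats general $X$ under the stronger hypotheses $b+q<0$ and $b<0$. There, the additional assumption $b<0$ is needed to reduce from $X$ to $X\times\EG C_2$ via \aref{crt}(2); here, because we already work with $\EG C_2$, the built-in periodicity accomplishes the same reduction to integer weight with no extra restriction on $b$ alone. No step is genuinely difficult, but the only point requiring mild care is the nonequivariant vanishing $H^{a,b'}(\EG C_2)=0$ for $b' < 0$, which rests on the description of $\EG C_2$ as a filtered colimit of smooth schemes and the compatibility of motivic cohomology with such colimits.
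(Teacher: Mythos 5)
Your argument is correct and follows essentially the same route as the paper: use the $(-2+2\sigma,-1+\sigma)$-periodicity to reduce to integer weight $b+q<0$, kill the $p=0$ case via $H^{a,b+q}_{C_2}(\EG C_2,\ul{\Z/n})\cong H^{a,b+q}(\BG C_2,\Z/n)=0$, and then induct on the $\sigma$-exponent using the long exact sequence from \eqref{eqn:cof1}, whose nonequivariant terms vanish in negative weight. Your extra remark spelling out the vanishing of $H^{a,b'}(\EG C_2)$ for $b'<0$ just makes explicit a step the paper leaves implicit.
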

\begin{proof} 
	We have that $H^{a+p\sigma,b+q\sigma} _{C _2}(\EG C _2,\ul{\Z/n})\cong 
H^{a+2q+(p-2q)\sigma,b+q}_{C _2}(\EG C _2, \ul{\Z/n})$. 
	Using  the vanishing $H^{a+2q,b+q}_{C_2}(\EG C _2, \ul{\Z/n}) \cong H^{a+2q,b+q}(\BG C _2, \Z/n) =0$ together with the exact sequence induced by \eqref{eqn:cof1}, the result follows by induction. 
\end{proof}

 \begin{notation}\label{namedelts}
 We introduce certain  elements in the
 cohomology of  $\spec(\C)$ and $\EG C_2$. 
  The stated isomorphisms between the cohomology of  $\spec(\C)$ and $\EG C_2$ all follow from the exact sequences associated to \eqref{eqn:cof2} together with the vanishing of the Bredon motivic cohomology of $\EGt C_2$ in the relevant degrees, see \aref{crt}.
 \begin{itemize} 
 	\item $\tau_\sigma\in H^{0,\sigma} _{C _2}(\C,\ul{\Z/n})\cong
 	 H^{0,\sigma} _{C _2}(\EG C _2,\ul{\Z/n})\cong \Z/n$ is a generator.
 	
 	\item  $\xi\in H^{-2+2\sigma,-1+\sigma} _{C _2}(\C,\ul{\Z/n})\cong H^{-2+2\sigma,-1+\sigma}_{C _2}(\EG C _2,\ul{\Z/n})\cong \Z/n$ is a generator.

 \end{itemize}

\end{notation}

 Next we compute the multiplicative structure.  
The $\MMtn$-modules $H^{\star, b+q\sigma}_{C_2}(\EG C_2,\ul{\Z/n})$ together with multiplicative structure are displayed in \aref{fig:H(EC2)} below.

 \begin{lemma}\label{lem:was4.5}
 	Let $b+q\geq 0$. Then 
 	\[
 	\cdot \tau_\sigma: H^{a+p\sigma, b+q\sigma}_{C_2}(\EG C_2,\ul{\Z/n})\xrightarrow{\cong} H^{a+p\sigma, b+(q+1)\sigma}_{C_2}(\EG C_2,\ul{\Z/n})
 	\]
 	is an isomorphism. 
 \end{lemma}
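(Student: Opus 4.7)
The plan is to reduce first to $q=0$ via $\xi$-periodicity, and then to handle each region of \aref{Bocal} case by case.

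By \cite[Theorem 5.4]{HOV1}, $H^{\star,\star}_{C_2}(\EG C_2,\ul{\Z/n})$ is $(-2+2\sigma,-1+\sigma)$-periodic via multiplication by the invertible class $\xi$.  Since $\cd\xi$ commutes with $\cd\tau_\sigma$, the general case reduces to proving: for $b\geq 0$,
$\cd\tau_\sigma:H^{a+p\sigma,b}_{C_2}(\EG C_2,\ul{\Z/n}) \to H^{a+p\sigma,b+\sigma}_{C_2}(\EG C_2,\ul{\Z/n})$
is an isomorphism.  The three cases $a\leq 2b$, $a=2b+1$, $a\geq 2b+2$ of \aref{Bocal} depend only on $a$ and $b$, so they apply uniformly to source (weight $b$) and target (weight $b+\sigma$).

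In the case $a\leq 2b$, Betti realization identifies both sides with $H^{a+p\sigma}_{\Br}(\Es C_2,\ul{\Z/n})$ and commutes with $\cd\tau_\sigma$.  The subcase $a=p=b=0$, in which Betti is an isomorphism, identifies the generator $\tau_\sigma\in H^{0,\sigma}_{C_2}(\EG C_2)$ with a unit in $H^{0}_{\Br}(\Es C_2)\cong\Z/n$; thus $\cd\tau_\sigma$ becomes multiplication by a unit on the topological side, hence an iso.  The case $a=2b+1$ is trivial since both sides vanish.

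In the case $a\geq 2b+2$, \aref{Bocal} identifies the source with $\MMtn^{a+p\sigma}$ (via the isotropy iso $\EG C_2\cong\C$ combined with Betti from \aref{prop:ptiso}) and the target with $\MMtn^{a+2+(p-2)\sigma}$ (via a preliminary $\xi^{-1}$-shift to integer weight $b+1$ followed by the same chain).  Since $a\geq 2$, both graded pieces live in the negative cone $\NC$, and each is nonzero---a copy of $\Z/n$---precisely when $a+p\leq 0$, a condition which matches on source and target.  Tracing $\cd\tau_\sigma$ through these chains and using that $\Re(\tau_\sigma)$ and $\Re(\xi)$ equal units times $1$ and $u^2$ respectively in $\MMtn$, one verifies that $\cd\tau_\sigma$ corresponds to multiplication by a unit times $u^{-2}$ on $\NC$, which is an isomorphism on each nonzero graded piece.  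The main obstacle is carrying out this last step:  the chain for the target uses an asymmetric $\xi^{-1}$-shift that is not available on the source, so one must verify the commutativity of the induced operation on $\MMtn$ with $\cd\tau_\sigma$ on the $\EG C_2$-cohomology, carefully tracking the $\NC$-calculation in the sub-range where isotropy at weight $b+\sigma$ is not itself an iso.
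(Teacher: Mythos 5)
Your reduction to $q=0$ and your treatment of the cases $a\leq 2b$ and $a=2b+1$ coincide with the paper's argument. The gap is in the case $a\geq 2b+2$, and it is exactly the step you yourself flag as ``the main obstacle'': the claim that, under your two identifications with pieces of $\MMtn$, the map $\cd\tau_\sigma$ becomes the formal shift ``unit times $u^{-2}$'' on $\NC$ is never verified, and the tools you invoke cannot verify it. The identities $\Re(\tau_\sigma)=1$ and $\Re(\xi)=u^2$ are statements about realization of $\EG C_2$-cohomology into $H^{\star}_{\Br}(\Es C_2,\ul{\Z/n})\cong\MMtn[u^{-1}]$, and every class you care about here lies (after the identification of \aref{Bocal}) in the negative cone $\NC$, which maps to zero under $\MMtn\to\MMtn[u^{-1}]$; so chasing realizations to $\Es C_2$ detects nothing about the map on these classes. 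Moreover ``$u^{-2}$'' is not an element of $\MMtn$, so the asserted formula only makes sense as the outcome of a computation you have not done; and if you try to do it using the relation $\xi\mu=u^2$ and the module description $\MMt\{\mu^b\tau_\sigma^q\}$, you are invoking structure (\aref{prop:1pce}, \aref{thm:ptring}) that is proved later and depends on this very lemma through \aref{thm:was4.8}, so the argument would be circular.

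The paper closes this case with a softer argument that avoids identifying the target with a piece of $\MMtn$ at all. For $a\geq 2b+2$ one has $\rH^{a+p\sigma,b+q\sigma}_{C_2}(\EGt C_2,\ul{\Z/n})=0$ by \aref{crt}, so the groups in question agree with $H^{a+p\sigma,b}_{C_2}(\C,\ul{\Z/n})$ and $H^{a+p\sigma,b+\sigma}_{C_2}(\C,\ul{\Z/n})$. Now realize the \emph{target} to $H^{a+p\sigma}_{\Br}(\pt,\ul{\Z/n})$ without asking this to be an isomorphism: since $\Re$ is a ring map and $\Re(\tau_\sigma)=1$, the composite $\Re\circ(\cd\tau_\sigma)$ equals the realization of the source, which \emph{is} an isomorphism by \aref{prop:ptiso} because the source sits in the integer weight $b\geq 0$ (here realization is to $\MMtn$, not to $H^\star_{\Br}(\Es C_2)$, so negative-cone classes survive). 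Hence $\cd\tau_\sigma$ is injective, and surjectivity follows from the count in \aref{Bocal}: for $a\geq 2b+2\geq 2$ the source and target are abstractly either both $\Z/n$ or both $0$. If you replace your unverified ``$u^{-2}$'' step with this injectivity-plus-counting argument, your proof becomes complete and is then essentially the paper's.
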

 \begin{proof}
 	By periodicity, it suffices to treat the case $q=0$.

 	If $a\leq 2b$, the claim follows from \aref{Bocal}, since $\Re(\tau_\sigma)=1$. 
 	 	It holds for $a=2b+1$ as these groups are both zero. 
 	 	If $a\geq 2b+2$, then $\rH^{a+p\sigma, b+q\sigma}_{C_2}(\EGt C_2,\ul{\Z/n})=0$ by \aref{crt} which implies that
 	$H^{a+p\sigma, b+q\sigma}_{C_2}(\C,\Z/n)\cong H^{a+p\sigma, b+q\sigma}_{C_2}(\EG C_2,\Z/n)$.  
 	Multiplication by $\tau_\sigma$ fits into 
 	the commutative triangle
 	\[
 	\begin{tikzcd}
 	H^{a+p\sigma, b}_{C_2}(\C,\ul{\Z/n})\ar[d, "\cong"']\ar[r, "\cd \tau_\sigma"] & H^{a+p\sigma, b+\sigma}_{C_2}(\C,\ul{\Z/n})\ar[dl] \\
 	H^{a+p\sigma}_{\Br}(\pt, \ul{\Z/n}).
 	\end{tikzcd}
 	\]
 	It follows that 
 	$\cd \tau_\sigma: H^{a+p\sigma, b}_{C_2}(\EG C_2,\ul{\Z/n})\to H^{a+p\sigma, b+\sigma}_{C_2}(\EG C_2,\ul{\Z/n})$ is injective. 
 But it follows from \aref{Bocal} that either both of these groups are $\Z/n$ or both are $0$. Thus the map is an isomorphism.
 \end{proof}

\begin{theorem}\label{thm:was4.8}
	Let $n\geq 2$. The canonical map is an isomorphism of rings
\[
\MMtn[\xi^{\pm 1}, \tau_\sigma] \xrightarrow{\cong} H^{\star,\star}_{C_2}(\EG C_2, \ul{\Z/n}).
\]

\end{theorem}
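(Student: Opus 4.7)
The strategy is to show that the ring homomorphism
$\phi\colon \MMtn[\xi^{\pm 1}, \tau_\sigma] \to H^{\star,\star}_{C_2}(\EG C_2, \ul{\Z/n})$,
defined by sending $\xi$ and $\tau_\sigma$ to the classes of \aref{namedelts} and extending $\MMtn$-linearly via \aref{prop:ptiso} and the projection $\EG C_{2+}\to S^0$, is an isomorphism bidegree by bidegree.

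First I would dispose of the bidegrees $(a + p\sigma, b + q\sigma)$ with $b + q < 0$. No monomial $m \cdot \xi^i \tau_\sigma^j$ lands in such a bidegree, as a direct bidegree computation shows $j = b + q$, and the target vanishes by \aref{neq}.

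For bidegrees with $b + q \geq 0$, the plan is to reduce to the case $b = q = 0$ using two multiplicative isomorphisms that $\phi$ respects. Multiplication by $\tau_\sigma$ is an isomorphism on both sides (formally on the source, by \aref{lem:was4.5} on the target), and multiplication by $\xi$ is an isomorphism on the source (formally) and realizes the $(-2+2\sigma, -1+\sigma)$-periodicity of \cite[Theorem 5.4]{HOV1} on the target. The latter holds because $H^{-2+2\sigma, -1+\sigma}_{C_2}(\EG C_2, \ul{\Z/n}) \cong \Z/n$ is cyclic, so any generator (in particular $\xi$) acts as a unit, and the periodicity isomorphism must be given by such a multiplication. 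Iterating these two operations matches every bidegree with one of the form $(a + p\sigma, 0)$.

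It then remains to check that $\phi$ in bidegree $(a+p\sigma, 0)$ reduces to the natural map $\MMtn^{a+p\sigma} \to H^{a+p\sigma, 0}_{C_2}(\EG C_2, \ul{\Z/n})$ and that this is an isomorphism. By \aref{Bocal} the map is an isomorphism for $a \geq 2$ (where $\rH^{a+p\sigma, 0}_{C_2}(\wt\EG C_2) = 0$, so \aref{prop:ptiso} applies directly) and for $a \leq 0$ (where Bocal identifies the target via Betti realization with $H^{a+p\sigma}_{\Br}(\Es C_2) = \MMtn[u^{-1}]$, and the natural map becomes the localization $\MMtn \to \MMtn[u^{-1}]$, which is an isomorphism in these bidegrees because the negative-cone summand of $\MMtn$ is concentrated in $a \geq 2$); for $a = 1$ both sides vanish. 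The step I expect to require the most attention is confirming that multiplication by the specific class $\xi$ realizes the periodicity of \cite[Theorem 5.4]{HOV1}; this is ultimately settled by the cyclicity of the periodicity group together with $\Z/n$-linearity of the multiplication action.
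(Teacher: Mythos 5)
Your proof is correct and follows essentially the same route as the paper: vanishing in weights $b+q<0$ via \aref{neq}, reduction to weight zero via $\tau_\sigma$-multiplication (\aref{lem:was4.5}) and $\xi$-periodicity, and identification of the weight-zero line with $\MMtn$. The paper handles the weight-zero step in one stroke, using $\rH^{\star,0}_{C_2}(\EGt C_2,\ul{\Z/n})=0$ together with \aref{prop:ptiso} rather than your case analysis on $a$ via \aref{Bocal}, but the two are interchangeable; your explicit justification that $\cdot\,\xi$ realizes the periodicity is a detail the paper leaves implicit.
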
 
 \begin{proof}
 	Since $H^{\star,0}_{C_2}(\EGt C_2,\ul{\Z/n}) = 0$, we have $H^{\star,0}_{C_2}(\C,\ul{\Z/n})\cong 
 	H^{\star,0}_{C_2}(\EG C_2,\ul{\Z/n})$.
 	Thus, together with periodicity, we have an isomorphism
 	\[
 	H^{\star,0}_{C_2}(\C, \ul{\Z/n})[\xi^{\pm 1}]\xrightarrow{\cong} \bigoplus_{a,p,b}H^{a+p\sigma, b-b\sigma}_{C_2}(\EG C_2, \ul{\Z/n}).
 	\] 	
 	The result now follows from \aref{lem:was4.5}.
 \end{proof}

\begin{remark}
Recall \eqref{eqn:voebg} that $H^{*,*}_{C_2}(\EG C_2, \ul{\Z/2}) \cong H^{*,*}(\BG C_2, \Z/2)\cong \Z/2[\tau][e_1,e_2]/\{e_1^{2}=e_2\tau\}$, where $|e_1| = (1,1)$ and $|e_2|=(2,1)$.
In terms of the generators that appear in \aref{thm:was4.8} we have
$e_1=\frac{au\tau_\sigma}{\xi},e_2=\frac{a^2\tau_\sigma}{\xi},\tau=\frac{u^2\tau_\sigma}{\xi}$.

\end{remark}

The multiplicative structure of $H^{\star,\star}_{C_2}(\EG C_2)$ is displayed the following figure.

\begin{figure}[H]
	\begin{tikzpicture}[scale=1.7]
		\draw[style=mygrid] (-3.0,-2.0) grid (3.0,2.0);
		\foreach \x in {-3,...,3}{
		\draw  (\x ,-2.5)  node {$\x$};
		}
	
		\foreach \y in {-2,...,2}{
		\draw (-3.5,\y ) node {$\y$};
		}

		\draw[->, thick] (-3.8,1.3) -- (-3.8,1.6) node[above] {$q\cd \sigma$};
		\draw[->, thick] (2.3, -2.8) -- (2.6, -2.8) node[right] {$b$};

		\foreach \x in {-2,...,2}{
		\draw[->, very thick, color=ForestGreen] (\x,-\x) -- (\x,2.3);
	}	
		\draw[->,very thick, color=ForestGreen] (3,-2) -- (3,2.3);
		\draw[->,very thick, red] (0,0) -- (-2.3,2.3);
		\draw[->,very thick, red] (0,1) -- (-1.3,2.3);
		\draw[->,very thick, red] (0,2) -- (-0.3,2.3);
		\draw[->,very  thick, red] (3,2) -- (3.3, 1.7);
		\draw[->, very thick, red] (2,2) -- (3.3, 0.7);
		\draw[->, very thick, red] (1,2) -- (3.3, -0.3);
		\draw[->,very  thick, red] (0,2) -- (3.3, -1.3);
		\draw[->, very thick, red] (0,1) -- (3.3, -2.3);
		\draw[->, very thick, red] (0,0) -- (2.3, -2.3);

		\foreach \x in {-2,...,2}{
			\foreach \y in {-\x,...,2} {
			\node at (\x,\y) [fill=black, circle, inner sep = 2pt] {};
			}
		}
	\foreach \y in {-2,...,2} {
			\node at (3,\y) [fill=black, circle, inner sep = 2pt] {};
			}

		\draw (0,0) node[above right] {$1$};
		\draw (-1,1) node[above right] {$\xi$};
		\draw (1,-1) node[above right] {$\xi^{-1}$};
		\draw (0,1) node[above right] {$\tau_\sigma$};

		\node at (-2.5, -1.45) [fill=black, circle, inner sep = 2pt] {};
		\node at (-2.35,-1.45) [right, fill= white] {$=\MMtn$};

	\end{tikzpicture} \hfill 
	\caption{{$H^{\star, b+q\sigma}_{C_2}(\EG C_2,\ul{\Z/n})$}, the  elements have degrees $|\xi|=(-2+2\sigma, -1+\sigma)$ and $|\tau_\sigma|= (0,\sigma)$. Vertical green lines are multiplication by $\tau_\sigma$, upward diagonal red lines indicate multiplication by $\xi$ and downward diagonal red lines indicate multiplication by $\xi^{-1}$.}
	\label{fig:H(EC2)}
\end{figure}

We end this section with a determination of $H^{\star,\star}_{C_2}(\EGt C_2,\ul{\Z/2})$. The $\MMt$-submodules of 
$\rH^{\star}_{\Br}(\wt\Es C_2, \ul{\Z/2})$, defined by 
$\block_i := \Sigma^{2-2\sigma} \Z/2[ a^{\pm 1}, u^{-1}]/(u^{-2i})$ were introduced in \aref{rem:blocks}.
In the following proposition $\mu$ and $\tau_\sigma$ are formal variables which serve the purpose of placing $\block_i$ into the correct weight. The names of these formal variables are chosen to indicate the $H^{\star,\star}_{C_2}(\C,\ul{\Z/2})$-module structure, which will be determined in the next section.

\begin{proposition}\label{prop:wteg}
	There is an isomorphism of 
	$\MMt$-modules
\[
\rH^{\star,\star}(\wt\EG C_2,\ul{\Z/2}) \cong \bigoplus_{i\geq 1, j\in \Z}\hspace{-0.8em}\block_i\hspace{-1pt}\left\{\mu^i\tau_\sigma^j\right\}.
\]
	where $|\tau_\sigma|=(0,\sigma)$ and $|\mu| = (0,1-\sigma)$. 
\end{proposition}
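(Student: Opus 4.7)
The plan is to compute $\rH^{\star,\star}_{C_2}(\EGt C_2, \ul{\Z/2})$ weight-by-weight, using periodicities to reduce to integer bidegrees where Voevodsky's computation of $H^{*,*}(\BG C_2, \Z/2)$ applies, and then to pin down the $\MMt$-module structure in each weight via Betti realization. First I would dispose of weights with $b \leq 0$ via \aref{crt}(2). For weights $b + q\sigma$ with $b \geq 1$, the equivalences $\EGt C_2 \wedge S^\sigma \simeq \EGt C_2$ and $\EGt C_2 \wedge T^\sigma \simeq \EGt C_2$ together imply that multiplication by both $a$ (yielding a $(\sigma, 0)$-periodicity) and $\tau_\sigma$ (yielding a $(0, \sigma)$-periodicity) acts invertibly on $\rH^{\star,\star}_{C_2}(\EGt C_2, \ul{\Z/2})$; this reduces the computation to integer bidegrees $(a, b)$.

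In integer bidegree $(a, b)$ with $b \geq 1$, \aref{crt}(1) combined with Voevodsky's computation \eqref{eqn:voebg} gives
\[
\rH^{a,b}_{C_2}(\EGt C_2, \ul{\Z/2}) \cong \rH^{a,b}(\Sigma \BG C_2, \Z/2) \cong \rH^{a-1,b}(\BG C_2, \Z/2),
\]
which is $\Z/2$ exactly when $2 \leq a \leq 2b+1$ and vanishes otherwise. Extending through the two periodicities, in weight $b + q\sigma$ the group $\rH^{a+p\sigma, b+q\sigma}_{C_2}(\EGt C_2, \ul{\Z/2})$ is a copy of $\Z/2$ for each pair $(a, p)$ with $2 \leq a \leq 2b+1$ and $p \in \Z$, which matches $\block_b$ as a $\Z/2$-vector space.

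To identify the $\MMt$-module structure, I would compare with the topological side via Betti realization, which maps into $\rH^\star_{\Br}(\wt\Es C_2, \ul{\Z/2}) \cong \Sigma^{2-2\sigma}\Z/2[a^{\pm 1}, u^{-1}]$. The submodule $\block_b$ sits inside this target as the subquotient supported in cohomological $a$-degrees at most $2b+1$. For integer bidegrees, \aref{lema:leqb} together with Voevodsky's calculation shows that realization is an isomorphism onto the range $2 \leq a \leq 2b+1$; since $a$-multiplication is invertible on both sides, the identification propagates to all cohomological $\sigma$-degrees. Because realization is $\MMt$-equivariant (via \aref{prop:ptiso}), the motivic cohomology in each weight $b+q\sigma$ is identified with $\block_b$ as an $\MMt$-module. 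The formal variables $\mu, \tau_\sigma$ in the direct sum simply label copies of $\block_b$ according to $|\mu^b \tau_\sigma^{b+q}| = (0, b+q\sigma)$.

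The main technical point is verifying that the image of Betti realization in each weight is precisely $\block_b$ rather than some larger submodule of $\rH^\star_{\Br}(\wt\Es C_2, \ul{\Z/2})$: this is forced by the vanishing of $\rH^{a,b}(\Sigma \BG C_2, \Z/2)$ for $a > 2b+1$, which is exactly what produces the $u^{-2b}$-quotient defining $\block_b$.
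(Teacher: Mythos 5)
Your proposal is correct and follows essentially the same route as the paper: vanishing for $b\leq 0$ via \aref{crt}, reduction to integer bidegrees by the $(\sigma,0)$- and $(0,\sigma)$-periodicities, Voevodsky's computation of $H^{*,*}(\BG C_2,\Z/2)$, and Betti realization via \aref{lema:leqb} to identify each weight $b+q\sigma$ piece with $\block_b$ as an $\MMt$-module. You merely unpack explicitly (the group count in integer bidegrees and the propagation along $a$-periodicity) what the paper compresses into the assertion that realization is an isomorphism for $b\geq 0$, $a\leq 2b+1$ and the groups vanish otherwise.
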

\begin{proof}

It follows from \aref{crt} and \aref{lema:leqb} that  Betti realization 
\[
\Re:\rH^{a+p\sigma, b+q\sigma}_{C_2}(\EGt C_2, \ul{\Z/2})\to 
\rH^{a+p\sigma}_{\Br}(\wt \Es C_2, \ul{\Z/2})
\]
is an isomorphism if $b\geq 0$ and $a\leq 2b+1$  
and $\rH^{a+p\sigma, b+q\sigma}_{C_2}(\EGt C_2, \ul{\Z/2}) = 0$ if $b\leq 0$ or $a>2b+1$. In particular we see that for $b\geq 1$
Betti realization identifies 
$\rH^{\star, b+q\sigma}_{C_2}(\EGt C_2)$ 
with the submodule 
\[
\bigoplus_{i\leq 2b+1 } \rH^{i+*\sigma}_{\Br}(\wt \Es C_2) \subseteq  
\rH^{\star}_{\Br}(\wt \Es C_2). 
\]
This is precisely the submodule 
$B_b=\Sigma^{2-2\sigma}\Z/2[u^{-1}, a^{\pm 1}]/u^{-2b}$, see \aref{rem:blocks}.

\end{proof}

%\begin{remark}\label{rem:wtEGmod}
%The same argument shows that 
%Betti realization identifies 
%$\rH^{\star, b+q\sigma}_{C_2}(\EGt C_2,\ul{\Z/n})$ 
%with 
%$\bigoplus_{i\leq 2b+1 } \rH^{i+*\sigma}_{\Br}(\wt \Es C_2,\ul{\Z/n}) \subseteq  
%\rH^{\star}_{\Br}(\wt \Es C_2,\ul{\Z/n})$. 
%
%\end{remark}
	We equip 
	$\bigoplus_{i\geq 1, j\in \Z}\block_i\hspace{-1pt}\left\{\mu^i\tau_\sigma^j\right\}
	$
	with a $\MMt[\xi,\tau_\sigma, \mu]/(\xi\mu-u^2)$-module structure as follows, 
	\begin{itemize}[label={}]
		\item $\cd\tau_\sigma:\block_i\{\mu^i\tau_\sigma^j\}\to \block_i\{\mu^i\tau_\sigma^{j+1}\}$ is the identity.
		\item $\cd\mu:\block_i\{\mu^i\tau_\sigma^j\}\to \block_{i+1}\{\mu^{i+1}\tau_\sigma^j\}$ is the inclusion \eqref{eqn:incl}.
		\item $\cd \xi: \block_i\{\mu^i\tau_\sigma^j\} \to \block_{i-1}\{\mu^{i-1}\tau_\sigma^j\}$, for $i\geq 2$, is \eqref{eqn:byu}, multiplication by $u^2$. 
%		\[
%		\Sigma^{2-2\sigma}\Z/2[u^{-1}, a^{\pm 1}]/u^{-2i} \xrightarrow{\cd u^2}
%		\Sigma^{2-2\sigma}\Z/2[u^{-1}, a^{\pm 1}]/u^{-2i+2},
%		\]
%		i.e., $\xi\cd\frac{\theta}{u^m}a^n\mu^i\tau_\sigma^j = u^2\frac{\theta}{u^m}a^n\mu^{i-1}\tau_\sigma^j$.
	\end{itemize}

We'll see in the next section that this describes the action of $H^{\star,\star}_{C_2}(\C)$. 
The cohomology of $\EGt C_2$, together with the module structure just described,  is displayed in the following figure.
\begin{figure}[H]
	\begin{tikzpicture}[scale=1.7]
		\draw[style=mygrid] (-2.0,-2.0) grid (4.0,2.0);
		\foreach \x in {-2,...,4}{
		\draw  (\x ,-2.5)  node {$\x$};
		}
	
		\foreach \y in {-2,...,2}{
		\draw (-2.5,\y ) node {$\y$};
		}

		\draw[->, thick] (-2.8,1.3) -- (-2.8,1.6) node[above] {$q\cd \sigma$};
		\draw[->, thick] (3.3, -2.8) -- (3.6, -2.8) node[right] {$b$};

		\foreach \x in {1,...,4}{
		\draw[->, very thick, color=ForestGreen] (\x,-2) -- (\x,2.3);
	}	

		\draw[->,very  thick, blue] (4,2) -- (4.3, 1.7);
		\draw[->, very thick, blue] (3,2) -- (4.3, 0.7);
		\draw[->, very thick, blue] (2,2) -- (4.3, -0.3);
		\draw[->, very thick, blue] (1,2) -- (4.3, -1.3);
		\draw[->,very  thick, blue] (1,1) -- (4.3, -2.3);
		\draw[->, very thick, blue] (1,0) -- (3.3, -2.3);
		\draw[->, very thick, blue] (1,-1) -- (2.3, -2.3);
		\draw[->, very thick, blue] (1,-2) -- (1.3, -2.3);
		\path[red,bend right]  (2,-2) edge node[ midway,above, xshift=3pt]{$u^2$}(1,-1) ;
		\path[red, bend right]  (3,-2) edge node[ midway,above, xshift=3pt]{$u^2$}(2,-1) ;
		\path[red, bend right]  (2,-1) edge node[ midway,above, xshift=3pt]{$u^2$}(1,0) ;
		\path[red, bend right]  (4,-2) edge (3,-1) ;
		\path[red, bend right]  (3,-1) edge (2,0) ;
		\path[red,bend right]  (2,0) edge (1,1) ;
		\path[red,bend right]  (4,-1) edge (3,0) ;
		\path[red, bend right]  (3,0) edge (2,1) ;
		\path[red,bend right]  (2,1) edge (1,2) ;
		\path[red,bend right]  (3,1) edge (2,2) ;
		\path[red, bend right]  (4,0) edge (3,1) ;
		\path[red, bend right]  (4,1) edge (3,2) ;
	\foreach \x in {1,...,4}{
			\foreach \y in {-2,...,2}{
				\node at (\x,\y) [draw, fill = white] {$\x$};
			}
		}

	\node at (1,-2) [above right, inner sep = 7pt] {$\mu\tau_{\sigma}^{-1}$};
\node at (2,-2) [above right, inner sep = 7pt] {$\mu^2$};
\node at (3,-2) [above right, inner sep = 7pt] {$\mu^3\tau_{\sigma}$};
\node at (2,-1) [above right, inner sep = 7pt] {$\mu^2\tau_{\sigma}$};
\node at (4,-2) [above right, inner sep = 7pt] {$\mu^4\tau_{\sigma}^{2}$};
\node at (1,-1) [above right, inner sep = 7pt] {$\mu$};
\node at (1,0) [above right, inner sep = 7pt] {$\mu\tau_{\sigma}$};
\node at (1,1) [above right, inner sep = 7pt] {$\mu\tau_{\sigma}^{2}$};
\node at (1,2) [above right, inner sep = 7pt] {$\mu\tau_{\sigma}^{3}$};
	\node at (-1.85, -1.5) [draw, fill = white] {$i$};
%		\node at (-2.35,-2.8) [right, fill= white] {$=\block_i$};
		\node at (-1.73,-1.5) [right, fill= white] {$=\Sigma^{2-2\sigma}\Z/2[u^{-1}, a^{\pm 1}]/u^{-2i}$};
	\end{tikzpicture} \hfill 
	\caption{{$\rH^{\star, b+q\sigma}_{C_2}(\EGt C_2,\ul{\Z/2})$}. Vertical green lines are multiplication by $\tau_\sigma$, diagonal blue lines are multiplication by $\mu$. The curved red lines indicate the action of $\xi$, which acts as multiplication by $u^2$. }
	\label{fig:H(wtEC2)}
\end{figure}

\section{Bredon motivic cohomology of \texorpdfstring{$\C$}{C}}\label{sec:BC}
 
  This section identifies the Bredon motivic cohomology ring of the complex numbers with $\Z/2$ coefficients.

  We begin with some additive structure.
  
  \begin{theorem}\label{thm:additive}
  Let $n\geq 2$ be a natural number.
  	\begin{enumerate}
  		\item If $b\geq 0$ and $b+q\geq 0$ then Betti realization induces an isomorphism 
  		\[
  		\Re : H^{\star,b+q\sigma}_{C_2}(\C,\ul{\Z/n})\xrightarrow{\cong}
  		\MMtn.
  		\] 
  		
  		\item If $b\geq 0$ and $b+q<0$ then 
  		$H^{\star, b+q\sigma}_{C_2}(\C,\ul{\Z/n})\cong 
  		\rH^{\star, b+q\sigma}_{C_2}(\EGt C_2, \ul{\Z/n})$. 
%  		In particular
%  			\[
%  			H^{\star, b+q\sigma}_{C_2}(\C,\ul{\Z/2})\cong \block_b.
%  			\]
%  			Moreover, if $a \leq 2b+1$ and $a< -p$, then  
%  		$\Re$ is an isomorphism, it is injective if $a=-p$, and is zero for all other values of $a,p$.
  		Moreover, $\Re$ is identified with the $\MMtn$-module map
  		\[\bigoplus_{a\leq 2b+1}\rH^{a+*\sigma}_{\Br}(\wt\E C_2, \ul{\Z/n})\to \MMtn.\]
  		
  		\item If $b<0$ and $b+q\geq 0$ then 
  		$H^{\star, b+q\sigma}_{C_2}(\C,\ul{\Z/n})\cong
  		 H^{\star, b+q\sigma}_{C_2}(\EG C_2, \ul{\Z/n})\cong \MMtn$. 		  	
%  		 In particular,
%  		 \[ 
%  		 H^{a+p\sigma, b+q\sigma}_{C_2}(\C,\Z/n)\cong 
%  	 \begin{cases}
%  	 	H^{a+p\sigma}_{\Br}(\pt,\ul{\Z/n}) & a\leq 2b \\
%  	 	0 & a = 2b+1 \\
%  	 	H^{a+2q+(p-2q)\sigma}_{\Br}(\pt, \ul{\Z/n}) & 2b+2\leq a.
%  	 \end{cases}
%  		 \]
  		 
%  		 Moreover, $\Re $ is an isomorphism if $a\leq 2b$ or if $2\leq a$. It is multiplication by $2$ if $2b+2\leq a\leq 1$ where $a$ is even and $a+p=0$, and it is zero for all other values of $a,p$.  
Moreover $\Re$ is identified with the $\MMtn$-algebra map 
\[\MMtn\to H^{\star}_{\Br}(\E C_2, \ul{\Z/n}).\]
  		
  \item If $b<0$ and $b+q<0$, then $H^{\star, b+q\sigma}_{C_2}(\C, \ul{\Z/n})=0$. 
\end{enumerate}
  \end{theorem}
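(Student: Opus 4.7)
The plan is to handle the four regions separately, using the long exact sequence \eqref{eqn:les2} induced by $\EG C_{2+} \to S^0 \to \EGt C_2$ combined with the computations in \aref{thm:was4.8} and \aref{prop:wteg}.

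Part (4) is immediate from \aref{van}. For Part (3), the hypothesis $b < 0$ yields $\rH^{\star, b+q\sigma}_{C_2}(\EGt C_2,\ul{\Z/n}) = 0$ via \aref{crt}(2), so \eqref{eqn:les2} gives the isomorphism $H^{\star, b+q\sigma}_{C_2}(\C,\ul{\Z/n}) \cong H^{\star, b+q\sigma}_{C_2}(\EG C_2,\ul{\Z/n})$; applying \aref{thm:was4.8} identifies the weight $b+q\sigma$ slice (with $b+q \geq 0$) as $\MMtn\cdot\xi^{-b}\tau_\sigma^{b+q} \cong \MMtn$. For Part (2), the hypothesis $b+q<0$ forces $H^{\star, b+q\sigma}_{C_2}(\EG C_2,\ul{\Z/n}) = 0$ by \aref{neq}, so \eqref{eqn:les2} yields $\rH^{\star, b+q\sigma}_{C_2}(\EGt C_2) \cong H^{\star, b+q\sigma}_{C_2}(\C)$; the $\block_b$-identification then follows from \aref{prop:wteg} and \aref{rem:blocks}. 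In both cases the description of $\Re$ is obtained by naturality of Betti realization applied to \eqref{eqn:cof2}.

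Part (1) is the main substance. I induct on $q$, with base case $q = 0$ (in which $b+q = b \geq 0$) supplied by \aref{prop:ptiso}. Since $\Re(\tau_\sigma) = 1$, it suffices to show that for every pair $(b,q)$ with $b+q \geq 0$, multiplication by $\tau_\sigma$ is an isomorphism
\[
\cd \tau_\sigma : H^{\star, b+q\sigma}_{C_2}(\C, \ul{\Z/n}) \xrightarrow{\cong} H^{\star, b+(q+1)\sigma}_{C_2}(\C, \ul{\Z/n}).
\]
This in turn follows from the five lemma applied to $\tau_\sigma$-multiplication between two copies of \eqref{eqn:les2}: on $H^{\star, b+q\sigma}_{C_2}(\EG C_2,\ul{\Z/n})$ it is an isomorphism for $b+q \geq 0$ by \aref{lem:was4.5}, and on $\rH^{\star, b+q\sigma}_{C_2}(\EGt C_2,\ul{\Z/n})$ it is an isomorphism unconditionally since \aref{prop:wteg} exhibits it as the $j$-shift in the summand $\block_i\{\mu^i\tau_\sigma^j\}$. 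Propagating upward from $q = 0$ covers all $q \geq 0$, while descending from $q = 0$ down to $q = -b$ uses that each step from $q_0$ to $q_0 - 1$ requires only $b + q_0 - 1 \geq 0$, a condition satisfied throughout the descent (the final step from $q = 1-b$ to $q = -b$ needs $b+(1-b)-1 = 0 \geq 0$).

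The main obstacle is that the naive five-lemma comparison of \eqref{eqn:les2} with the Bredon long exact sequence for the point breaks down in the region $a \geq 2b+2$, where Betti realization on $H^{\star,\star}_{C_2}(\EG C_2)$ vanishes for $\ul{\Z/2}$-coefficients by \aref{Bocal} and no direct diagram chase pins down $\Re$ on $H^{\star,\star}_{C_2}(\C)$. Routing everything through $\tau_\sigma$-periodicity, anchored on the integer-weight isomorphism \aref{prop:ptiso}, is what circumvents this difficulty.
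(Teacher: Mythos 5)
Your treatment of parts (2)--(4) matches the paper's: (4) from the vanishing statements, (3) from \aref{crt} plus \aref{thm:was4.8}, (2) from \aref{neq} plus the identification of $\rH^{\star,b+q\sigma}_{C_2}(\EGt C_2)$ inside $\rH^{\star}_{\Br}(\wt\Es C_2)$, with the descriptions of $\Re$ coming from naturality. For part (1) your route is genuinely different: the paper first reduces to $p=0$ by induction along \eqref{eqn:les1} (using that $H^{*,n}(\C)\to H^*_{\sing}(\pt)$ is an isomorphism for $n\geq 0$) and then runs the isotropy comparison \eqref{eqn:les2} at $p=0$ using \aref{Bocal} and \aref{crt}, whereas you anchor at integer weight via \aref{prop:ptiso} and propagate in $q$ by showing $\cd\tau_\sigma$ is an isomorphism on $H^{\star,b+q\sigma}_{C_2}(\C)$ via the five lemma on \eqref{eqn:les2}. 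That propagation scheme is sound, and your bookkeeping of which $\tau_\sigma$-steps are needed (down to $q=-b$) is correct.

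However, there is a gap in the key input on the $\EGt C_2$ column: you assert that $\cd\tau_\sigma$ is ``unconditionally'' an isomorphism on $\rH^{\star,b+q\sigma}_{C_2}(\EGt C_2,\ul{\Z/n})$ because \aref{prop:wteg} ``exhibits it as the $j$-shift.'' It does not: in \aref{prop:wteg} the symbols $\mu$ and $\tau_\sigma$ are explicitly formal placeholders recording weights, and the statement is purely an additive ($\MMt$-module) identification; the fact that multiplication by the actual class $\tau_\sigma\in H^{0,\sigma}_{C_2}(\C)$ realizes the $j$-shift is \aref{lem:wtmod}, which in the paper's logical order rests on \aref{prop:1pce} and hence on \aref{thm:additive} itself, so citing it here would be circular. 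The step is salvageable without circularity: by \aref{crt} and \aref{lema:leqb} (not \aref{prop:wteg}, which is stated only for $\Z/2$ while the theorem concerns $\Z/n$), Betti realization embeds $\rH^{\star,b+q\sigma}_{C_2}(\EGt C_2,\ul{\Z/n})$ onto the subgroup $\bigoplus_{a\leq 2b+1}\rH^{a+*\sigma}_{\Br}(\wt\Es C_2,\ul{\Z/n})$, which is independent of $q$; since $\Re(\tau_\sigma)$ is a unit (a generator of $H^{0}_{\Br}(\pt,\ul{\Z/n})$, by \aref{Bocal}), the naturality square for $\cd\tau_\sigma$ then forces $\cd\tau_\sigma$ to be injective with full image, hence an isomorphism. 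With that argument supplied, your five-lemma propagation goes through and gives a correct alternative proof of (1); as written, though, the justification of this step is missing.
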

 \begin{proof}
 	
 We make use of the comparison of long exact sequences, obtained from \eqref{eqn:cof1} and \eqref{eqn:cof2} 
 \begin{equation}\label{eqn:6cof1}
 \begin{tikzcd}[column sep=small]
 \cdots \ar[r] & H^{a+p, b +q}_{}(\C) \ar[r]\ar[d] & H^{a+p\sigma, b +q\sigma}_{C_2}(\C) \ar[r]\ar[d] & H^{a+(p+1)\sigma, b +q\sigma}_{C_2}(\C) \ar[r]\ar[d] & H^{a+1+p, b+q}(\C)\ar[r]\ar[d] & \cdots \\
 \cdots \ar[r] & H^{a+p}_{\sing}(\pt) \ar[r]& H^{a+p\sigma}_{\Br}(\pt) \ar[r] & H^{a+(p+1)\sigma}_{\Br}(\pt) \ar[r] & H^{a+1+p}_{\sing}(\pt)\ar[r] & \cdots
 \end{tikzcd} 
 \end{equation}
 and
 \begin{equation}\label{eqn:6cof2}
 \begin{tikzcd}[column sep=small]
 	\cdots \ar[r] & \rH^{a+p\sigma, b +q\sigma}_{C_2}(\EGt C_2) \ar[r]\ar[d] & H^{a+p\sigma, b +q\sigma}_{C_2}(\C) \ar[r]\ar[d] & H^{a+p\sigma, b +q\sigma}_{C_2}(\EG C_{2}) \ar[r]\ar[d] & \cdots 
 	\\
 	\cdots \ar[r] & \rH^{a+p\sigma}_{\Br}(\wt\E C_2) \ar[r]& H^{a+p\sigma}_{\Br}(\pt) \ar[r] & H^{a+p\sigma}_{\Br}(\E C_{2}) \ar[r] & \cdots .
 \end{tikzcd}
 \end{equation}
  
 First we note that (4) follows since $ H^{a+p\sigma, b +q\sigma}_{C_2}(\EG C_{2})=0$ if $b+q <0$ (see \aref{neq}) and
 $\rH^{a+p\sigma, b +q\sigma}_{C_2}(\EGt C_2) = 0$ if $b<0$ (see \aref{crt}).

 To establish (1), we first observe that it 
 suffices to show that $H^{a,b+q\sigma}_{C_2}(\C)\to H^{a}_{\Br}(\pt)$ is an isomorphism for all $a$. Indeed the general case follows from the $p=0$ case by induction (upwards and downwards) on $p$, using \eqref{eqn:6cof1}, since $H^{*,n}(\C)\to H^*_{\sing}(\pt)$ is an isomorphism when $n\geq 0$. 
 Next, we note that using \eqref{eqn:6cof2} together with \aref{Bocal} and
 \aref{crt}, we have that $H^{a, b +q\sigma}_{C_2}(\C)\to H^{a}_{\Br}(\pt)$ is an isomorphism for $a\leq 2b$ and that $H^{a, b +q\sigma}_{C_2}(\C) =0$ for $a>2b$.
  Since $b\geq 0$, for this implies that $\Re$ is also an isomorphism for $a> 2b$.

 For part (2), consider  the commutative diagram
 \[
 \begin{tikzcd}[column sep=small]
 	\rH^{a+p\sigma, b +q\sigma}_{C_2}(\EGt C_2) \ar[r, "\cong"]\ar[d, hookrightarrow] & H^{a+p\sigma, b +q\sigma}_{C_2}(\C) \ar[d] 
 	\\
 	\rH^{a+p\sigma}_{\Br}(\wt\E C_2) \ar[r]& H^{a+p\sigma}_{\Br}(\pt).
 \end{tikzcd}
 \]
 The upper horizontal arrow is an isomorphism  
 since 
  $H^{\star, b+q\sigma}(\EG C_2, \ul{\Z/n})=0$ if $b+q<0$.  
  It follows from \aref{lema:leqb} and \aref{crt} that 
  Betti realization identifies 
  $\rH^{\star, b+q\sigma}_{C_2}(\EGt C_2)$ 
  with 
  $
  \bigoplus_{i\leq 2b+1 } \rH^{i+*\sigma}_{\Br}(\wt \Es C_2) \subseteq  
  \rH^{\star}_{\Br}(\wt \Es C_2)$.
% If $a+p<0$ then $H^{a+p\sigma}_{\Br}(\E C_2)=0$, so the lower horizontal map is an isomorphism and hence the right vertical map is as well. 
% If $a+p=0$, 
% then the lower horizontal map is injective, implying it is an isomorphism (as the domain and codomain have the same finite number of elements). Hence the right vertical map is an isomorphism as well. 
% Finally, by inspection, we see that if $a+p>0$, the right vertical map is always zero (either the domain or codomain is always zero in this case).  
     
  The statements of (3) follow from \aref{crt} and \aref{thm:was4.8}. 
   
 \end{proof}

% \begin{remark}\label{rem:similar}
For simplicity,  we now restrict to the case of $\Z/2$ coefficients, because this is the most interesting case. 
However, it is straightforward to adapt the following discussion to the general case of $\ul{\Z/n}$-coefficients.
%\end{remark}

Define 	
\[
\mu \in H^{0,1-\sigma}_{C_2}(\C,\ul{\Z/2}) \cong \Z/2
\] 
to be the generator. (Here we use that $H^{0,1-\sigma}_{C_2}(\C)\cong H^{0,1-\sigma}_{C _2}(\EG C _2)$.)
Also recall that we defined elements 
 $\xi\in H^{-2+2\sigma, -1+\sigma}_{C_2}(\C,\ul{\Z/2})$ and 
 $\tau_\sigma\in H^{0, \sigma}_{C_2}(\C,\ul{\Z/2})$
each to be the generator of the displayed group (each of which is equal to $\Z/2$). 

\begin{remark}
	In terms of these elements we have
	\[
	\tau = \mu \tau_\sigma\in H^{0,1}_{C_2}(\C,\ul{\Z/2})\cong H^{0,1}(\C,\Z/2).
	\]
	(this can be seen, for example, by noting that $\Re(\mu \tau_\sigma)=1$).
\end{remark}

\begin{proposition}\label{prop:1pce}
There is an $\MMt$-algebra isomorphism
 		\[
 		\phi:\MMt[\xi,\tau_\sigma, \mu]/(\xi\mu-u^2)\xto{\cong} \bigoplus_{b+q\geq 0}H_{C_2}^{\star,b+q\sigma}(\C,\ul{\Z/2})
 		\]
 		defined by $\xi\mapsto \xi$, $\tau_\sigma\mapsto \tau_\sigma$, $\mu\mapsto \mu$.
\end{proposition}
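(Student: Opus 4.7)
The plan is to use \aref{thm:additive}, which for each weight $(b,q)$ with $b+q\geq 0$ identifies the target piece $H^{\star,b+q\sigma}_{C_2}(\C,\ul{\Z/2})$ as a free rank-one $\MMt$-module. I will use the relation $\xi\mu=u^2$ to show the corresponding weight piece of the source is also $\MMt$-cyclic, and then match generators via Betti realization and \aref{thm:was4.8}.

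First I would verify the relation $\xi\mu=u^2$ in $H^{-2+2\sigma,0}_{C_2}(\C,\ul{\Z/2})$; this is exactly what is required for $\phi$ to descend to the quotient and hence to be well-defined as a ring map. By \aref{thm:additive}(1) with $b=q=0$, Betti realization is injective on this bidegree, so it suffices to verify the equality in $\MMt$. The identifications in \aref{thm:additive}(1) and \aref{thm:additive}(3) together with \aref{thm:was4.8} give $\Re(\mu)=1$ and $\Re(\xi)=u^2$, whence $\Re(\xi\mu)=u^2=\Re(u^2)$.

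Next I would analyze the source weight-by-weight. A monomial $\xi^i\tau_\sigma^j\mu^k$ lies in weight $(b,q)$ precisely when $j=b+q$ and $k-i=b$; in particular the source is already concentrated in weights with $b+q\geq 0$. Repeated application of $\xi\mu=u^2$ reduces any such monomial to an $\MMt$-scalar multiple of
\[
x_{b,q}:=\begin{cases}\tau_\sigma^{b+q}\mu^{b}, & b\geq 0,\\ \tau_\sigma^{b+q}\xi^{-b}, & b<0.\end{cases}
\]
Hence the weight-$(b,q)$ piece of the source is the cyclic $\MMt$-module $\MMt\cdot x_{b,q}$.

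To finish I would check that $\phi(x_{b,q})$ is a free generator of the target $\MMt$. For $b\geq 0$, \aref{thm:additive}(1) gives $\Re(\phi(x_{b,q}))=\Re(\tau_\sigma)^{b+q}\Re(\mu)^{b}=1$, a generator of $\MMt$. For $b<0$ and $b+q\geq 0$, \aref{thm:additive}(3) together with \aref{thm:was4.8} identifies the target with the free rank-one summand $\MMt\cdot \xi^{-b}\tau_\sigma^{b+q}$ of $\MMt[\xi^{\pm 1},\tau_\sigma]$, and $\phi(x_{b,q})$ is exactly this generator. The resulting $\MMt$-module surjection from the cyclic module $\MMt\cdot x_{b,q}$ onto the free module $\MMt$ is automatically an isomorphism, because the composite $\MMt\twoheadrightarrow \MMt\cdot x_{b,q}\twoheadrightarrow\MMt$ sends $1$ to a generator, hence is an isomorphism, forcing the annihilator of $x_{b,q}$ to vanish. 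Summing over $(b,q)$ with $b+q\geq 0$ gives the claim. The main delicacy is handling the two regimes $b\geq 0$ and $b<0$ separately and tracking the bigrading, but no substantive obstacle arises once $\xi\mu=u^2$ is in hand.
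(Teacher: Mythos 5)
Your proposal is correct and follows essentially the same route as the paper: a weight-by-weight comparison in which the relation $\xi\mu=u^2$ is first verified by injecting weight zero into a known ring, and then each weight piece is identified using \aref{thm:additive}(1) (via Betti realization) for $b\geq 0$ and \aref{thm:additive}(3) together with \aref{thm:was4.8} for $b<0$. The only cosmetic differences are that the paper checks $\xi\mu=u^2$ by restricting to $\EG C_2$, where $\mu=u^2\xi^{-1}$ by a degree count, rather than via $\Re(\xi)=u^2$ and $\Re(\mu)=1$, and that it asserts directly that each weight piece of the source is free of rank one over $\MMt$, whereas you deduce this a posteriori from cyclicity.
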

\begin{proof}
First we note that the relation $\xi\mu = u^2$ holds in 
$H^{\star,0}_{C_2}(\C, \ul{\Z/2})$, so that there is a well-defined $\MMt$-algebra map $\phi$.
To see that this relation holds, it suffices to note that it holds in  $H^{\star,0}_{C_2}(\EG C_2)$ (since $\rH^{\star,0}_{C_2}(\EGt C_2)=0$). Now  $\mu$ is the generator of $H^{0,1-\sigma}_{C _2}(\EG C _2)$, but by periodicity, this group is also generated by $\frac{u^2}{\xi}$ (since $u^2$ generates $H^{-2+2\sigma,0}_{C _2}(\EG C _2)\cong H^{-2+2\sigma}_{\Br}(\pt)$).

Examining the weights of elements, we see that
 \[ 
 \left( \MMt[\xi,\tau_\sigma, \mu]/(\xi\mu-u^2)\right)^{(\star, b+q\sigma)} \cong \begin{cases}
 	\MMt\cdot\{\mu^b\tau_\sigma^{q+b} \} & b\geq 0 \vspace{2mm}\\
 	\MMt\cdot \{\xi^b\tau_\sigma^{q-b}\} & b\leq 0.
 \end{cases}
 \]
  
 If $b\geq 0$,  consider the commutative diagram
 \[
 \begin{tikzcd}
 	\left( \MMt[\xi,\tau_\sigma, \mu]/(\xi\mu-u^2)\right)^{(\star, b+q\sigma)}\ar[r, "\phi"]\ar[dr, "\cong"'] & H^{\star,b+q\sigma}_{C_2}(\C, \ul{\Z/2})\ar[d, "\cong"',"\Re"] \\
 	& \MMt.
 \end{tikzcd}
 \]
 The right vertical arrow is an isomorphism by \aref{thm:additive} and thus so is $\phi$. If $b\leq 0$, consider the commutative diagram
 \[
 \begin{tikzcd}
 	\left( \MMt[\xi,\tau_\sigma, \mu]/(\xi\mu-u^2)\right)^{(\star, b+q\sigma)}\ar[r, "\phi"]\ar[dr, "\cong"'] & H^{\star,b+q\sigma}_{C_2}(\C, \ul{\Z/2})\ar[d, "\cong"] \\
 	& H^{\star, b+q\sigma}_{C_2}(\EG C_2,\ul{\Z/2}). 
 \end{tikzcd}
 \]
 The vertical arrow is an isomorphism and therefore, so is $\phi$. 
\end{proof}

Next we verify that  the module structure on $\rH^{\star,\star}_{C_2}(\EGt C_2,\ul{\Z/2})$, displayed in \aref{fig:H(wtEC2)} is the one induced by the isomorphism of the previous proposition. 

\begin{lemma}\label{lem:wtmod}
The $\MMt[\xi,\tau_\sigma, \mu]/(\xi\mu-u^2)$-module structure on $\rH^{\star,\star}_{C_2}(\EGt C_2,\ul{\Z/2})$, induced by the isomorphism of \aref{prop:1pce}, is determined as follows,
	\begin{itemize}[label={}]
		\item $\cd\tau_\sigma:\block_i\{\mu^i\tau_\sigma^j\}\to \block_i\{\mu^i\tau_\sigma^{j+1}\}$ is the identity.
		\item $\cd\mu:\block_i\{\mu^i\tau_\sigma^j\}\to \block_{i+1}\{\mu^{i+1}\tau_\sigma^j\}$ is the inclusion \eqref{eqn:incl}.
		\item $\cd \xi: \block_i\{\mu^i\tau_\sigma^j\} \to \block_{i-1}\{\mu^{i-1}\tau_\sigma^j\}$, for $i\geq 2$, is \eqref{eqn:byu}, multiplication by $u^2$.
	\end{itemize} 
\end{lemma}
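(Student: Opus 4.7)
The plan is to invoke naturality of Betti realization and reduce the verification to a calculation inside $\rH^\star_{\Br}(\wt\Es C_2,\ul{\Z/2})$. By (the proof of) \aref{prop:wteg}, Betti realization gives an $\MMt$-module isomorphism
\[
\Re : \rH^{\star, b+q\sigma}_{C_2}(\EGt C_2, \ul{\Z/2}) \xrightarrow{\cong} \block_b \subseteq \rH^\star_{\Br}(\wt\Es C_2, \ul{\Z/2}).
\]
Because $\Re$ is a ring homomorphism, for any $x\in H^{\star,\star}_{C_2}(\C,\ul{\Z/2})$ the multiplication-by-$x$ map on the source is identified with multiplication by $\Re(x)\in\MMt$ on $\rH^\star_{\Br}(\wt\Es C_2,\ul{\Z/2})$, restricted to the submodules $\block_b$. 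Thus the task reduces to computing $\Re(\tau_\sigma)$, $\Re(\mu)$, and $\Re(\xi)$ and recording the induced maps.

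The next step is to identify these realizations. By \aref{prop:ptiso} applied in weight $\sigma$, the map $\Re : H^{0,\sigma}_{C_2}(\C,\ul{\Z/2}) \to H^0_{\Br}(\pt,\ul{\Z/2})\cong\Z/2$ is an isomorphism, so $\Re(\tau_\sigma)=1$. From the identity $\Re(\mu\tau_\sigma)=1$ recorded in the remark preceding \aref{prop:1pce}, we deduce $\Re(\mu)=1$. Finally, the relation $\xi\tau = u^2\tau_\sigma$ in $H^{\star,\star}_{C_2}(\EG C_2,\ul{\Z/2})$ (from the remark after \aref{thm:was4.8}) together with $\Re(\tau_\sigma)=\Re(\tau)=1$ forces $\Re(\xi)=u^2\in\MMt$.

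Finally, I would read off the module maps. Using $|\tau_\sigma|=(0,\sigma)$, $|\mu|=(0,1-\sigma)$, $|\xi|=(-2+2\sigma,-1+\sigma)$ and the parametrization $j=q+b$ that identifies the summand in weight $b+q\sigma$ with $\block_b\{\mu^b\tau_\sigma^j\}$, the three multiplications shift this summand to $\block_b\{\mu^b\tau_\sigma^{j+1}\}$, $\block_{b+1}\{\mu^{b+1}\tau_\sigma^j\}$, and $\block_{b-1}\{\mu^{b-1}\tau_\sigma^j\}$ respectively. Since $\Re(\tau_\sigma)=\Re(\mu)=1$, the first two are the identity on $\block_b$ and the submodule inclusion \eqref{eqn:incl}. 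Since $\Re(\xi)=u^2$, the third is multiplication by $u^2$ on $\rH^\star_{\Br}(\wt\Es C_2,\ul{\Z/2})$; an elementary check on the generators $\frac{\theta}{u^n}a^m$ shows that this restricts to the map \eqref{eqn:byu} (with $\block_0=0$ handling the case $b=1$). The main obstacle is the identification $\Re(\xi)=u^2$, which requires the explicit description of $\MMt$; once this is in hand the rest is weight bookkeeping.
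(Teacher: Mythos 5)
Your argument is correct and is essentially the paper's proof: both use the naturality of Betti realization (the commutative square identifying multiplication by $x$ on $\rH^{\star,\star}_{C_2}(\EGt C_2,\ul{\Z/2})$ with multiplication by $\Re(x)$ on $\rH^{\star}_{\Br}(\wt\Es C_2,\ul{\Z/2})$) together with $\Re(\tau_\sigma)=\Re(\mu)=1$ and $\Re(\xi)=u^2$, the paper merely asserting the latter where you derive it from the relation $\xi\mu=u^2$ (equivalently $\xi\tau=u^2\tau_\sigma$). One small correction: $\tau_\sigma$ has weight $\sigma$, which is not an integer weight, so the isomorphism giving $\Re(\tau_\sigma)=1$ should be cited from \aref{thm:additive}(1) (or \aref{Bocal} together with \aref{crt}) rather than \aref{prop:ptiso}.
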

\begin{proof}

Given an element $x\in H^{a+p\sigma, m+n\sigma}_{C_2}(\C,\ul{\Z/2})$, we have the commutative square 
\[
\begin{tikzcd}
\rH^{\star,b+q\sigma}_{C_2}(\EGt C_2, \ul{\Z/2}) \ar[r, "\cd x"]\ar[d, hookrightarrow] & \rH^{\star,b+m+(q+n)\sigma}_{C_2}(\EGt C_2, \ul{\Z/2}) \ar[d,  hookrightarrow] \\
\rH^\star_\Br(\wt\Es C_2,\ul{\Z/2}) \ar[r, "\cd \Re(x)"] & \rH^\star_\Br(\wt\Es C_2,\ul{\Z/2}).
\end{tikzcd}
\]
The identification of the module structure follows using that $\Re(\tau_\sigma) =\Re(\mu) = 1$ and $\Re(\xi) = u^2$.
\end{proof}

\begin{theorem}\label{thm:ptring}
 		There is an 
 		$\MMt$-algebra isomorphism 
\[
\MMt[\xi,\tau_\sigma, \mu]/(\xi\mu-u^2)\oplus\Big(\bigoplus_{i,j\geq 1}\hspace{-0.1em}\block_i\hspace{-1pt}\left\{\frac{\mu^i}{\tau_\sigma^j}\right\}\Big)
\xto{\cong} 
H_{C_2}^{\star,\star}(\C,\ul{\Z/2}).
\]
defined by $\xi\mapsto \xi$, $\tau_\sigma\mapsto \tau_\sigma$, $\mu\mapsto \mu$.
 		The multiplicative structure on $H_{C_2}^{\star,\star}(\C, \ul{\Z/2})$ is determined as follows.
 		\begin{enumerate}
 			\item The left hand summand is the displayed quotient of a polynomial ring.
 			\item Multiplications between elements in the left hand  and the right hand summands, are determined by 
 			\begin{itemize}[label={}]
 				\item $\cd\tau_\sigma:\block_i\{\mu^i\tau_\sigma^j\}\to \block_i\{\mu^i\tau_\sigma^{j+1}\}$, for $j\geq 2$, is the identity.
 				\item $\cd\tau_{\sigma}: \block_i\{\mu^i\tau_\sigma\}\to\MMt\{\mu^i\}$ is the map \eqref{eqn:BiM}.
 				\item $\cd\mu:\block_i\{\mu^i\tau_\sigma^j\}\to \block_{i+1}\{\mu^{i+1}\tau_\sigma^j\}$ is the inclusion \eqref{eqn:incl}.
 				\item $\cd \xi: \block_i\{\mu^i\tau_\sigma^j\} \to \block_{i-1}\{\mu^{i-1}\tau_\sigma^j\}$ is \eqref{eqn:byu}, multiplication by $u^2$.
% 				\[
% 				\Sigma^{2-2\sigma}\Z/2[u^{-1}, a^{\pm 1}]/u^{-2i} \xrightarrow{\cd u^2}
% 				\Sigma^{2-2\sigma}\Z/2[u^{-1}, a^{\pm 1}]/u^{-2i+2}.
% 				\]
 			\end{itemize}
 			\item Products in the right hand summand are  trivial.
		\end{enumerate}
 \end{theorem}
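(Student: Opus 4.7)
The plan is to combine the additive decomposition of $H^{\star,\star}_{C_2}(\C,\ul{\Z/2})$ from \aref{thm:additive} with the algebra structure from \aref{prop:1pce} and \aref{lem:wtmod}, and then argue separately that products between right-summand elements vanish. On the left summand I define $\Phi$ to be the isomorphism $\phi$ of \aref{prop:1pce}; on each direct summand $\block_i\{\mu^i/\tau_\sigma^j\}$ (with $i,j\geq 1$), I define $\Phi$ as the composition
\[
\block_i \xrightarrow{\cong} \rH^{\star,i-(i+j)\sigma}_{C_2}(\EGt C_2,\ul{\Z/2}) \xrightarrow{\alpha^*} H^{\star,i-(i+j)\sigma}_{C_2}(\C,\ul{\Z/2}),
\]
where the first map is the isomorphism of \aref{prop:wteg} and the second, induced by the unit $\alpha:S^0\to\EGt C_2$, is an isomorphism by \aref{thm:additive}(2) since $b+q=-j<0$. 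Because the two summands cover the disjoint regions $b+q\geq 0$ and $b\geq 1,b+q<0$, and the remaining region has vanishing cohomology by \aref{thm:additive}(4), $\Phi$ is an additive isomorphism.

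For the multiplicative structure, item (1) is \aref{prop:1pce}. For item (2), since $\alpha^*$ is $H^{\star,\star}_{C_2}(\C)$-linear, multiplication by $\tau_\sigma$, $\mu$, and $\xi$ on the right summand agrees with the corresponding actions on $\rH^{\star,\star}_{C_2}(\EGt C_2)$ computed in \aref{lem:wtmod}; this establishes three of the four bullets. The remaining case is the boundary multiplication $\cd\tau_\sigma:\block_i\{\mu^i/\tau_\sigma\}\to\MMt\{\mu^i\}$, which crosses from the right into the left summand. To identify this with \eqref{eqn:BiM}, consider the commutative square
\[
\begin{tikzcd}
\rH^{\star,i-i\sigma}_{C_2}(\EGt C_2,\ul{\Z/2})\ar[r,"\alpha^*"]\ar[d,hook,"\Re"'] & H^{\star,i-i\sigma}_{C_2}(\C,\ul{\Z/2})\ar[d,"\cong"',"\Re"] \\
\rH^\star_{\Br}(\wt\Es C_2,\ul{\Z/2})\ar[r] & \MMt
\end{tikzcd}
\]
whose left column is the inclusion $\block_i\hookrightarrow\rH^\star_{\Br}(\wt\Es C_2)$ of \aref{prop:wteg}, whose right column is the isomorphism of \aref{prop:ptiso}, and whose bottom row is \eqref{eqn:wtEtopt}. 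Commutativity, combined with the identity action of $\tau_\sigma$ from \aref{lem:wtmod}, identifies the boundary multiplication with \eqref{eqn:BiM}.

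Finally, for item (3), products of two right-summand elements lie in weights with $b+q<0$, where $\alpha^*$ is a ring isomorphism, so the vanishing reduces to the claim that $\rH^{\star,\star}_{C_2}(\EGt C_2,\ul{\Z/2})$ has trivial multiplication. Fixing a weight, Betti realization is the injection $\block_b\hookrightarrow\rH^\star_\Br(\wt\Es C_2,\ul{\Z/2})$ by \aref{prop:wteg}, so it suffices to verify that the topological ring $\rH^\star_\Br(\wt\Es C_2,\ul{\Z/2})\cong\Sigma^{2-2\sigma}\Z/2[a^{\pm 1},u^{-1}]$ has trivial multiplication. Since every element is a $\Z/2[a^{\pm 1},u^{-1}]$-multiple of the degree $(2-2\sigma)$ generator $\theta$, this reduces to $\theta^2=0$. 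The last identity follows because the ring map $\rH^\star_\Br(\wt\Es C_2)\to\MMt$ of \eqref{eqn:wtEtopt} sends $\theta$ into the square-zero ideal $\NC\subset\MMt$, while being an isomorphism in degree $|\theta^2|=4-4\sigma$.

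The main obstacle I anticipate is precisely the last step: rigorously promoting $\theta^2=0$ to the vanishing of \emph{all} products in $\rH^\star_\Br(\wt\Es C_2)$. The informal relation $(\theta f)(\theta g)=\theta^2\cdot fg$ invokes an extension of the $\MMt$-module structure to a $\Z/2[a^{\pm 1},u^{-1}]$-module structure on the generator, and the compatibility of this extension with the ring multiplication must be justified carefully, perhaps by a degreewise argument using the injectivity of $\alpha^*_{\rm top}$ on the relevant negative-cone components of $\MMt$ together with multiplication by $u$ in the remaining degrees.
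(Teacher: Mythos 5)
Your additive identification, the use of \aref{prop:1pce}, and your treatment of the cross-term multiplications, including the boundary $\tau_\sigma$-multiplication via a commutative square comparing $\alpha^*$, Betti realization, and \eqref{eqn:wtEtopt}, follow the paper's proof essentially step for step (one small slip: the right-hand vertical isomorphism in your square is in weight $i-i\sigma$, so the correct reference is \aref{thm:additive}(1), not \aref{prop:ptiso}, which only covers weights $b+0\sigma$). The genuine departure from the paper is item (3), and that is exactly where your argument has a gap, which you flag yourself: having shown $\theta^2=0$, the identity $(\theta f)(\theta g)=\theta^2\cdot fg$ is not available, because $f,g\in\Z/2[a^{\pm 1},u^{-1}]$ are not elements of any ring acting on $\rH^{\star}_{\Br}(\wt\Es C_2,\ul{\Z/2})$; only the subring $\Z/2[a,u]\subset\MMt$ acts. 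Hence $\theta^2=0$ together with $\MMt$-bilinearity only kills products of classes of the form $\theta a^m$, $m\geq 0$, and says nothing about, say, $\frac{\theta}{u}\cdot\frac{\theta}{u}$ or $\frac{\theta}{a}\cdot\theta$, whose target degrees carry nonzero groups. As written, item (3) is therefore not proved.

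The claim you need is true and your sketched repair can be made to work: if $x=\frac{\theta}{u^{n_1}}a^{m_1}$ and $y=\frac{\theta}{u^{n_2}}a^{m_2}$, then $xy$ lies in the one-dimensional group spanned by $\frac{\theta}{u^{n_1+n_2+2}}a^{m_1+m_2}$, on which multiplication by $u^{n_1+1}\in\MMt$ is injective, while $u^{n_1+1}\cd x=0$; $\MMt$-bilinearity of the product then forces $xy=0$. (Alternatively, factor out $a$-powers via the $\MMt$-action and use that \eqref{eqn:wtEtopt} is multiplicative with image in the square-zero ideal $\NC$ and is injective on the components with nonpositive $a$-exponent.) For comparison, the paper sidesteps the topological computation entirely: it uses the $(\sigma,0)$- and $(0,\sigma)$-periodicity of $\rH^{\star,\star}_{C_2}(\EGt C_2,\ul{\Z/2})$ to reduce all products to integer bidegrees, where $\rH^{*,*}_{C_2}(\EGt C_2,\ul{\Z/2})\cong\rH^{*,*}(\Sigma\BG C_2,\Z/2)$ is the reduced cohomology of a suspension and hence has trivial products; this is shorter and avoids the module-versus-ring subtlety you ran into. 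With the degreewise argument above (or the suspension argument substituted), your proof goes through.
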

 \begin{proof}
 In \aref{prop:1pce}, we have already identified 
 $\oplus_{b+q\geq 0}H^{\star, b+q\sigma}_{C_2}(\C)$.
To identify the remaining piece, we use that the map
  $\oplus_{b+q<0}\rH^{\star,b+q\sigma}_{C_2}(\wt\EG C_{2}, \ul{\Z/2})\xto{\cong} \oplus_{b+q<0}H^{\star, b+q\sigma}_{C_2}(\C, \ul{\Z/2})$ is an isomorphism by \aref{neq}. 
Using \aref{prop:wteg}, we conclude the $\MMt$-module structure on 
$H^{\star, b+q\sigma}_{C_2}(\C, \ul{\Z/2})$ is as in the theorem. 
Products in $\rH^{\star,\star}_{C_2}(\wt\EG C_2, \ul{\Z/2})$ are trivial, since they are determined by products in $\rH^{*,*}_{C_2}(\wt\EG C_2, \ul{\Z/2})\cong \rH^{*,*}(\Sigma \BG C_2, \Z/2)$, which are trivial. 
The multiplicative structure involving both the left and right hand summands follows  from 
 \aref{lem:wtmod} with the exception of the $\tau_\sigma$-multiplications starting in weights $i-(i+1)\sigma$, which follow from the commutative diagram
\[
\begin{tikzcd}
\rH^{\star, i-(i+1)\sigma}_{C_2}(\EGt C_2)\ar[d, hookrightarrow]\ar[r, "\cong"] & H^{\star,i-(i+1)\sigma}_{C_2}(\C) \ar[d]\ar[r, "\cd \tau_\sigma"] &  H^{\star,i-i\sigma}_{C_2}(\C) \ar[d, "\cong"] \\
\rH^\star_\Br(\wt\Es C_2) \ar[r] & \MMt \ar[r, "\cd 1"]& \MMt.
\end{tikzcd}
\]

 \end{proof}

\begin{figure}[H]
	\begin{tikzpicture}[scale=1.7]
		%		\path[style=myfill] (0,0) -- (-4.0,4.0) -- (0,4.0) -- (0,0) ;
		%		\path[style=myfill] (2,-2) -- (2,-4.0) -- (4.0,-4.0) -- (2,-2) ;
		\draw[style=mygrid] (-3.0,-4.0) grid (3.0,3.0);

		\foreach \x in {-3,...,3}{
			\draw  (\x ,-4.5)  node {$\x$};
		}
		
		\foreach \y in {-4,...,3}{
			\draw (-3.5,\y ) node {$\y$};
		}
		
		%		\node at (-3.5, 3.4) {$q\cd\sigma$};
		%		\node at (3.4, -4.5) {$b$};
		\draw[->, thick] (-3.8,2.3) -- (-3.8,2.6) node[above] {$q\cd \sigma$};
		\draw[->, thick] (2.3, -4.8) -- (2.6, -4.8) node[right] {$b$};

		\foreach \x in {-3,...,3}{
			\draw[->, very thick, color=ForestGreen] (\x,-\x) -- (\x,3.3);
		}	
		\draw[->, very thick, color=ForestGreen] (1,-4) --(1,-2);
		\draw[->, very thick, color=ForestGreen] (2,-4) -- (2,-3);
		\draw[ very thick,  color=ForestGreen] (1,-2) -- (1,-1);
		\draw[very  thick, color=ForestGreen] (2,-3) -- (2,-2);
		\draw[ very thick,  color=ForestGreen] (3,-4) -- (3,-3);
		\draw[->,very thick, red] (0,0) -- (-3.3,3.3);
		\draw[->,very thick, red] (0,1) -- (-2.3,3.3);
		\draw[->,very thick, red] (0,2) -- (-1.3,3.3);
		\draw[->,thick, red] (0,3) -- (-0.3,3.3);
		\draw[->,very  thick, blue] (3,3) -- (3.3, 2.7);
		\draw[->, very thick, blue] (2,3) -- (3.3, 1.7);
		\draw[->, very thick, blue] (1,3) -- (3.3, 0.7);
		\draw[->, very thick, blue] (0,3) -- (3.3, -0.3);
		\draw[->,very  thick, blue] (0,2) -- (3.3, -1.3);
		\draw[->, very thick, blue] (0,1) -- (3.3, -2.3);
		\draw[->, very thick, blue] (0,0) -- (3.3, -3.3);
		\draw[->, very thick, blue] (1,-2) -- (3.3, -4.3);
		\draw[->,very  thick, blue] (1,-3) -- (2.3, -4.3);
		\draw[->,very  thick, blue] (1,-4) -- (1.3, -4.3);
		
		\foreach \x in {1,2}{
		\foreach \y in {-3,...,3}{
			\path[red, bend left]  (\x,\y) edge (\x+1,\y-1) ;
		}	
	}
		\foreach \y in {0,...,3}{
			\path[red, bend left]  (0,\y) edge (1,\y-1) ;
		}	
		\foreach \x in {0,...,-2}{
		\foreach \y in {-\x,...,2}{
			\path[blue, bend left]  (\x,\y) edge (\x-1,\y+1) ;
			}	
		}
		\path[red,bend left]  (0,0) edge node[ midway,above, xshift=3pt]{$u^2$}(1,-1);
		\path[blue,bend left]  (0,0) edge node[ midway,below, xshift=-3pt]{$u^2$}(-1,1);

		\foreach \x in {-3,...,3}{
			\foreach \y in {-\x,...,3} {
				%				\fill (\x,\y) circle(2pt);
				\node at (\x,\y) [fill=black, circle, inner sep = 2pt] {};
			}
		}
		\foreach \x in {1,...,3}{
			\pgfmathsetmacro{\newx}{-1-\x};
			\foreach \y in {-4,...,\newx}{
				\node at (\x,\y) [draw, fill = white] {$\x$};
			}
		}

		\draw (0,0) node[above right] {$1$};
		\draw (-1,1) node[above right] {$\xi$};
		\draw (1,-1) node[above right] {$\mu$};
		\draw (0,1) node[above right] {$\tau_\sigma$};

		\node at (-2.5, -2.45) [fill=black, circle, inner sep = 2pt] {};
		\node at (-2.35,-2.4 5) [right, fill= white] {$=\MMt$};
		\node at (-2.5, -2.8) [draw, fill = white] {$i$};
		
		\node at (-2.35,-2.8) [right, fill= white] {$= \Sigma^{2-2\sigma}\Z/2[u^{-1}, a^{\pm 1}]/u^{-2i}$};
	\end{tikzpicture} \hfill 
	\caption{{$H^{\star, b+q\sigma}_{C_2}(\C,\ul{\Z/2})$.} Vertical green lines indicate $\tau_\sigma$-multiplication, upward diagonal lines (red) are $\xi$-multiplication and downward diagonal lines (blue) indicate $\mu$-multiplication, and curved lines record the relation $\xi\mu = u^2$. }
	\label{fig:H(C)}
\end{figure}

\section{Bredon motivic cohomology of algebraically closed fields} 
   In this section we consider an algebraically closed field $k$ and a natural number $n>1$ coprime to $char(k)$. 
   Let $V$ be a $C_2$-equivariant smooth scheme over $k$.
   First we note a rigidity theorem for rational points:
   
   \begin{theorem} \label{sec} For a connected smooth scheme $X$ over $k$ and $k$-rational points $x_{0}$, $x_{1}$ of $X$, 
   $$(x_ 0) _*=(x_ 1) _*: H^{\star,\star} _{C _2}(V\times X,\ul{\Z/n})\rightarrow H^{\star,\star} _{C _2}(V,\ul{\Z/n}).$$ 
   \end{theorem}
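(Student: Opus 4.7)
The plan is to bootstrap the classical Suslin-Voevodsky rigidity theorem for motivic cohomology with finite coefficients \cite{Sus} up to the $C_2$-equivariant setting, using the two cofiber sequences \eqref{eqn:cof1} and \eqref{eqn:cof2} that already drive the rest of the paper. First I would smash \eqref{eqn:cof2} with $(V\times X)_+$ to obtain a long exact sequence relating $H^{\star,\star}_{C_2}(V\times X, \ul{\Z/n})$ to the two terms $H^{\star,\star}_{C_2}(V\times X\times \EG C_2, \ul{\Z/n})$ and $\rH^{\star,\star}_{C_2}((V\times X)_+ \wedge \wt\EG C_2, \ul{\Z/n})$. Since the pullbacks $(x_0)_\ast$ and $(x_1)_\ast$ act naturally on the whole sequence, by the five lemma it suffices to verify the desired equality on each of these two pieces separately.

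For the free piece, I would use the geometric model $\EG C_2 \simeq \colim_n (\A(n\sigma)\setminus 0)$ together with the fact that the action is free, to identify, in integer bidegrees, the group $H^{\ast,\ast}_{C_2}(V\times X\times \EG C_2, \ul{\Z/n})$ with the filtered colimit of the ordinary motivic cohomology groups $H^{\ast,\ast}\bigl((V\times (\A(n\sigma)\setminus 0))/C_2 \times X, \Z/n\bigr)$ of smooth $k$-varieties. Classical rigidity \cite{Sus} then yields the equality in each integer bidegree. To extend to the full ${\rm RO}(C_2)$-grading I would induct on the $\sigma$-exponent via the long exact sequence obtained from \eqref{eqn:cof1}, whose connecting groups are precisely the non-equivariant integer bidegrees already handled.

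For the fixed piece, I would exploit the equivalence $\wt\EG C_2\wedge V_+ \simeq \wt\EG C_2 \wedge V^{C_2}_+$ (see \cite[Proposition 4.10]{GH}) to replace $V$ by its smooth fixed locus $V^{C_2}$, which carries the trivial action. Using the $(0,\sigma)$- and $(\sigma,0)$-periodicities of $\wt\EG C_2$-cohomology to reduce to integer bidegrees, and using the long exact sequence from \eqref{eqn:cof2} applied to the trivial-action scheme $V^{C_2}\times X$, the problem reduces once more to ordinary motivic cohomology of $V^{C_2}\times X$ and of $V^{C_2}\times X \times \BG C_2$, where classical rigidity again applies.

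The main obstacle I anticipate is establishing a clean relative version of the identification in \aref{crt} for a general trivially-acted smooth $Y$ in place of $\spec(k)$, namely that $\rH^{\ast,\ast}_{C_2}(\wt\EG C_2 \wedge Y_+, \ul{\Z/n})$ is the cokernel of the split monomorphism $H^{\ast,\ast}(Y,\Z/n)\to H^{\ast,\ast}(\BG C_2\times Y, \Z/n)$, and that this identification is compatible with pullbacks along $k$-rational points of $X$. This compatibility should follow from naturality of \eqref{eqn:cof2} together with the free-action identification already invoked in the previous step; once it is in place, the five lemma conclusion is routine.
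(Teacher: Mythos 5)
Your reduction breaks down at its very first step. The five lemma lets you conclude that a map is an isomorphism from isomorphisms on neighbouring terms; it says nothing about equality of two maps. What is needed is that the difference $h=(x_0)_*-(x_1)_*$ vanishes on $H^{\star,\star}_{C_2}(V\times X,\ul{\Z/n})$, and knowing that $h$ vanishes on the $\EG C_2$-terms and on the $\EGt C_2$-terms of the ladder of long exact sequences obtained from \eqref{eqn:cof2} does not give this: a diagram chase only shows that for $c$ in the middle term, $h(c)$ lies in the image of $\rH^{\star,\star}_{C_2}(\EGt C_2\wedge V_+,\ul{\Z/n})\to H^{\star,\star}_{C_2}(V,\ul{\Z/n})$, i.e. $h$ could perfectly well be a nonzero map factoring through the boundary (for a map of short exact sequences that is zero on the sub and on the quotient, the middle map can be any map from the quotient of the source into the sub of the target). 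Repairing this would require a splitting of the isotropy sequence natural in the $\Sm/k$-variable, which is not available for a general $V$; the vanishing that lets the paper assemble $H^{\star,\star}_{C_2}(\C)$ region by region has no analogue here. The same flawed step recurs in your treatment of the $\EGt C_2$-piece, and your identification of $H^{*,*}_{C_2}(V\times X\times\EG C_2,\ul{\Z/n})$ with a filtered colimit of the cohomologies of the finite stages is also incorrect: mapping out of $\EG C_2=\colim_n(\A(n\sigma)\setminus 0)$ produces a (Milnor) limit, not a colimit, and equality of two maps does not automatically pass through the resulting $\lim/\lim^1$ extension either.

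The paper argues quite differently, and the difference is essential: it verifies that $F(-)=H^{\star,\star}_{C_2}(V\times -,\ul{\Z/n})$ is a homotopy invariant presheaf with weak transfers in the sense of \cite{YP} --- additivity, base change along finite flat maps, normalization, and homotopy invariance --- all supplied by the equivariant transfer structure of \cite{HOV1}, and then invokes the general rigidity machinery of \cite{YO} and \cite{Sus}. The transfer structure is the substantive input that your d\'evissage cannot replace; note moreover that the classical rigidity statement you want to quote for the quotients $\left(V\times(\A(n\sigma)\setminus 0)\right)/C_2\times X$ is itself proved by exactly this presheaf-with-transfers mechanism, so even a corrected reduction would not avoid it.
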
  
 
 According to \cite{YO}, Theorem \ref{sec} follows if the functor $F(-)=H^{\star,\star} _{C _2}(V\times -,\Z/n)$ is a homotopy invariant presheaf on $Sm/k$ with weak transfers in the sense of \cite{YP}. The four conditions that need to be fulfilled according to \cite{YO}  are:
 
 1) Additivity: For $X=X_0\sqcup X _1$ with corresponding embeddings $i _m:X _m\hookrightarrow X$ for $m=0,1$ and $f: X\rightarrow Y$ a map in $Sm/k$, 
     we have $f _*=(fi _0) _*i _0^*+(fi _1) _*i _1^*$
 
 2) Base change: For every finite flat map $f$, closed embedding $g$, and cartesian diagram: 
 
  \[
\xymatrixrowsep{1cm}
\xymatrixcolsep {1cm}
\xymatrix@-0.9pc{ 
X' \ar[r]^{g _1}\ar[d]^{f _1}& 
Y'\ar[d]^{f} 
\\
X\ar[r]^{g} &
Y
}
\]

 we have $g^*f _*=f_{1*}{g _1}^*$
 
3) Normalization: If $f$ is the identity map on $k$ then $f _*=id_{H^{\star,\star}_{C_2}(V,\ul{\Z/n})}$

4) Homotopy invariance: The rational points 0 and 1 of the affine line $\mathbb{A}^1 _k$ with trivial $C _2$-action yield equal pullback maps
$$0_*=1_*:H^{\star,\star} _{C _2}(V\times _k\mathbb{A}^1 _k)\rightarrow H^{\star,\star} _{C _2}(V).$$
 
 The functor $F$ fulfills all four conditions above as it is a homotopy invariant presheaf with equivariant transfers (\cite{HOV1}). 
 Moreover, because it is a homotopy invariant presheaf with equivariant transfers, according to \cite{Sus}, \cite{YO}, \cite{YP}, 
 we have the following theorem:
 
 \begin{theorem} \label{ext} Suppose $k\subset K$ is an extension of algebraically closed fields and $X$ is a smooth $C_2$-equivariant scheme. 
 If $n$ is coprime to $char(k)$, then $\pi: \spec(K)\rightarrow \spec(k)$ induces an isomorphism: 
 $$\pi^*: H^{\star,\star} _{C _2}(X,\ul{\Z/n})\cong H^{\star,\star} _{C _2}(X _K,\ul{\Z/n})$$
 \end{theorem}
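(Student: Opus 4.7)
The plan is to deduce \aref{ext} from \aref{sec} by the Suslin--Yagunov rigidity machinery of \cite{Sus}, \cite{YO}, \cite{YP}, applied to the homotopy invariant presheaf with equivariant transfers $F(-) := H^{\star,\star}_{C_2}(X \times -, \ul{\Z/n})$ on $\Sm/k$. All four axioms required by that machinery are verified for $F$ in the discussion preceding \aref{ext}, so the task is to spell out how rational-point rigidity upgrades to rigidity for extensions of algebraically closed fields.

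First I would reduce to the case of a finitely generated extension. Since Bredon motivic cohomology commutes with cofiltered limits of schemes along affine flat transition maps, writing $K$ as a filtered union of algebraically closed intermediate fields $K_\alpha = \overline{L_\alpha}$ with $L_\alpha/k$ finitely generated gives $H^{\star,\star}_{C_2}(X_K, \ul{\Z/n}) = \colim_\alpha H^{\star,\star}_{C_2}(X_{K_\alpha}, \ul{\Z/n})$, so it suffices to treat each $K_\alpha$. Next I would induct on the transcendence degree of $L_\alpha/k$. The base case is automatic since $k$ is algebraically closed, and the inductive step reduces to extensions of the form $k' \subset \overline{k'(t)}$ with $k'$ algebraically closed. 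For such an extension, a class on the right-hand side spreads out, after pulling back along a finite cover $Y' \to \A^1_{k'}$ and absorbing the cover via a $k'$-rational lift of a chosen $k'$-point (available because $k'$ is algebraically closed), to a class defined on $X \times U$ for some non-empty open $U \subset \A^1_{k'}$. The equality $(y_0)_* = (y_1)_*$ at two distinct $k'$-rational points of $U$ provided by \aref{sec}, combined with $\A^1$-homotopy invariance, then forces $\pi^*$ to be both injective and surjective, completing the inductive step.

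The main obstacle in executing this plan is the bookkeeping of transfers along the finite cover $Y' \to \A^1_{k'}$ used to spread out classes and along the normalizations appearing in the specialization argument; this is precisely what forces one to work with presheaves with transfers rather than arbitrary presheaves, and what the base-change axiom in \aref{sec} is tailored to control. Here the equivariant transfer structure on Bredon motivic cohomology supplied by \cite{HOV1} provides exactly the data called for by the formalism of \cite{YO}, \cite{YP}, so the standard Suslin argument transports to our equivariant setting without modification.
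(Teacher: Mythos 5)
Your overall strategy (continuity plus the rational-point rigidity of \aref{sec}) is the right one, but the key step of your inductive argument has a gap, and the induction itself turns out to be unnecessary. For surjectivity in the case $k'\subset\overline{k'(t)}$, after spreading a class $\beta$ out to $\beta'\in H^{\star,\star}_{C_2}(X\times U,\ul{\Z/n})$, what must be compared is the pullback of $\beta'$ along the \emph{generic} point $\phi\colon\spec(\overline{k'(t)})\to U$ with its pullback along a closed $k'$-rational point $\xi$. Applying \aref{sec} to two distinct $k'$-rational points $y_0,y_1$ of $U\subseteq\A^1_{k'}$, as you propose, does not accomplish this: it only compares closed fibers with one another (and over an open subset of $\A^1$ that comparison is essentially subsumed by homotopy invariance anyway); it says nothing about the generic fiber, which is where $\beta$ lives. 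The standard fix --- and the route the paper takes --- is to base change $U$ to the big field: both $\phi$ and $\xi\circ\pi$ induce $K$-rational points $\phi',\xi'$ of the connected smooth $K$-scheme $U_K$, and \aref{sec} applied \emph{over $K$} (legitimate since $K$ is again algebraically closed) gives $\phi'^{*}=\xi'^{*}$, whence $\beta=\pi^*\xi^*(\beta')$ after the base change $U_K\to U$. Once this is in place, no reduction to finitely generated subextensions and no induction on transcendence degree are needed: the paper writes $\spec(K)=\lim_U U$ with $U$ smooth affine over $k$ and handles arbitrary extensions in one step. Injectivity is also much simpler than your sketch suggests: if $\pi^*(x)=0$ then $x$ dies already over some $U$, and since $U$ has a $k$-rational point (as $k$ is algebraically closed), the map $H^{\star,\star}_{C_2}(X,\ul{\Z/n})\to H^{\star,\star}_{C_2}(X\times U,\ul{\Z/n})$ is split injective, so $x=0$; no rigidity input is required there.
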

 \begin{proof} We can write $\spec(K)=\lim _U (U)$, where $U$ is an affine smooth variety over $\spec(k)$. There is an induced map $$\pi^*:H^{\star,\star} _{C _2}(X,\ul{\Z/n})\rightarrow H^{\star,\star} _{C _2}(X\times K,\ul{\Z/n})=\colim_U H^{\star,\star} _{C _2}(X\times U,\ul{\Z/n})$$ so if $\pi^*(x)=0$ then there exists a map $\phi: U\rightarrow \spec (k)$ such that $\phi^*(x)=0$. Because $U$ has a $k$-rational point, $\phi$ yields a splitting and $\phi^*$ is injective. This implies $x=0$ so $\pi^*$ is injective.
 
 Next we show that $\pi^*$ is surjective. For every $\beta\in H^{a+p\sigma,b+q\sigma} _{C _2}(X\times K)$ there exists a map $\phi: \spec (K)\rightarrow U$ such that 
 $\phi^*(\beta ')=\beta$ with $\beta '\in H^{\star,\star} _{C _2}(X\times U)$. 
 If $\xi: \spec(k)\rightarrow U$ a rational point, 
 the maps $\xi\circ \pi$, $\phi: \spec (K)\rightarrow U$ induce $K$-rational points $\phi',\xi': \spec(K)\rightarrow U _K$. According to Theorem \ref{sec} we have that
 $$\phi^{'*}=\xi^{'*}: H^{\star,\star} _{C _2}(X\times U\times K)\rightarrow H^{\star,\star} _{C _2}(X\times K).$$
 For the base change $\underline{\pi}:U _K\rightarrow U$, we have 
$\beta- \pi^*\circ \xi^*(\beta')=\phi^*(\beta')-\pi^*\circ \xi^*(\beta ')=(\phi^{'*}-\xi^{*'})(\underline{\pi}^*(\beta'))=0$, and thus $\beta\in Im(\pi^*)$. 
 \end{proof}
 
 The next corollary computes Bredon  motivic cohomology for different algebraically closed fields of characteristic zero.
 \begin{corollary} Let  $K$ an algebraically closed field of characteristic zero and $n>1$. Then $$H^{a+p\sigma,b+q\sigma} _{C _2}(K,\ul{\Z/n})\cong H^{a+p\sigma,b+q\sigma} _{C _2}(\C,\ul{\Z/n})$$
 and $$H^{a+p\sigma,b+q\sigma} _{C _2}(\EG C _{2K},\ul{\Z/n})\cong H^{a+p\sigma,b+q\sigma} _{C _2}(\EG C _2,\ul{\Z/n}),$$
   for any choice of integers $a,b,p,q$. 
 \end{corollary}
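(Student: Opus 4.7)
The plan is to deduce the corollary directly from the rigidity result \aref{ext}, using $\overline{\Q}$ as a common base. Since $K$ and $\C$ are both algebraically closed of characteristic zero, each of them contains an embedded copy of the algebraic closure $\overline{\Q}$. We therefore get two instances of the hypothesis of \aref{ext}: the extensions $\overline{\Q}\subset K$ and $\overline{\Q}\subset\C$. Composing the induced isomorphisms gives the comparison between $K$ and $\C$.

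For the first isomorphism, take the smooth $C_2$-scheme $X=\spec(\overline{\Q})$ (with trivial action). Its base change to $K$ is $\spec(K)$ and to $\C$ is $\spec(\C)$. Applying \aref{ext} twice yields
\[
H^{a+p\sigma,b+q\sigma}_{C_2}(\spec(\C),\ul{\Z/n})
\xleftarrow{\cong}
H^{a+p\sigma,b+q\sigma}_{C_2}(\spec(\overline{\Q}),\ul{\Z/n})
\xrightarrow{\cong}
H^{a+p\sigma,b+q\sigma}_{C_2}(\spec(K),\ul{\Z/n}),
\]
which gives the claim.

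For the second isomorphism, recall the geometric model $\EG C_2\simeq\colim_n\bigl(\A(n\sigma)\setminus 0\bigr)$, which makes sense over any base field; in particular, the levels $\A(n\sigma)\setminus 0$ are smooth $C_2$-schemes defined over $\overline{\Q}$, and their base changes to $K$ and $\C$ give the corresponding models of $\EG C_{2K}$ and $\EG C_{2\C}$. Apply \aref{ext} at each finite level $n$ to obtain isomorphisms
\[
H^{\star,\star}_{C_2}(\A(n\sigma)\setminus 0,\ul{\Z/n})
\cong H^{\star,\star}_{C_2}((\A(n\sigma)\setminus 0)_K,\ul{\Z/n})
\cong H^{\star,\star}_{C_2}((\A(n\sigma)\setminus 0)_\C,\ul{\Z/n}),
\]
compatible with the restriction maps as $n$ varies. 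Passing to the inverse limit (and using the Milnor short exact sequence that expresses the cohomology of $\EG C_2$ as the limit over these levels) then transfers the isomorphism to $\EG C_2$.

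The argument is essentially a double application of \aref{ext}, so the only mild obstacle is the passage to the colimit defining $\EG C_2$: one must check compatibility of the rigidity isomorphisms with the transition maps $\A(n\sigma)\setminus 0\hookrightarrow\A((n+1)\sigma)\setminus 0$, and verify that the potential $\lim^1$ contributions match on the two sides. Both points are formal, since the rigidity isomorphism of \aref{ext} is natural in the smooth $C_2$-scheme $X$ and the base change functor commutes with the colimit defining $\EG C_2$.
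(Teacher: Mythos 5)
Your argument is correct and is essentially the intended one: the paper states the corollary as a direct consequence of \aref{ext}, the implicit proof being exactly your double application of rigidity over the common algebraically closed subfield $\overline{\Q}\subset K$, $\overline{\Q}\subset\C$. Your extra care with the colimit defining $\EG C_2$ (levelwise rigidity for $\A(m\sigma)\setminus 0$ plus the Milnor $\lim^1$ comparison, both handled by naturality of $\pi^*$ and the fact that base change commutes with the colimit) correctly fills in the step the paper leaves unstated.
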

  
% \bibliographystyle{plain}
% \bibliography{road}

\vspace{20pt}
\scriptsize
	\noindent
	Jeremiah Heller\\
	University of Illinois at Urbana-Champaign\\
	Department of Mathematics\\
	1409 W. Green Street, Urbana, IL 61801\\
	United States\\
	\texttt{jbheller@illinois.edu}

\vspace{10pt}
	\noindent
	Mircea Voineagu\\
	UNSW Sydney\\
	Department of Mathematics and Statistics\\
	NSW 2052 Australia\\
	\texttt{m.voineagu@unsw.edu.au}

\vspace{10pt}
	\noindent
	Paul Arne {\O}stv{\ae}r\\
	Department of Mathematics F. Enriques \\
	University of Milan \\
	Italy \\
	\texttt{paul.oestvaer@unimi.it}  \\
	%\& \\
	Department of Mathematics\\
	University of Oslo\\
	Norway\\
	\texttt{paularne@math.uio.no}

\end{document}